%%%%%%%%%%%%%%%%%%%%%%%%%%%%%%%%
%%%
%%% Manuscript title - "Measures and stabilizers for Cantor group actions"
%%%
%%% Authors - Maik Groeger, Olga Lukina
%%%
%%% Start:  July 7, 2019
%%% Version: October 21, 2019
%%% Version: 
%%% Version: November 1, 2019
%%% Revision:  April 28, 2020
%%% Revision: August 13, 2020
%%%
%%%%%%%%%%%%%%%%%%%%%%%%%%%%%%%%

\documentclass{amsart}
\usepackage{graphicx}
\usepackage{amssymb}
\usepackage{epstopdf}
\usepackage{nicefrac}
\usepackage{enumerate}
\usepackage{verbatim}
\usepackage{hyperref}
\usepackage{color}
\usepackage{tikz}
\usepackage[normalem]{ulem}

\DeclareGraphicsRule{.tif}{png}{.png}{`convert #1 `dirname #1`/`basename #1 .tif`.png}

\parskip = 6pt
\parindent = 0.0in

\hoffset=-.7in
\voffset=-.7in
\setlength{\textwidth}{6in}
\setlength{\textheight}{9.1in}

\newtheorem{thm}{THEOREM}[section]

\newtheorem{cor}[thm]{COROLLARY}

\newtheorem{defn}[thm]{DEFINITION}
\newtheorem{ex}[thm]{EXAMPLE}

\newtheorem{lemma}[thm]{LEMMA}
\newtheorem{prob}[thm]{PROBLEM}
\newtheorem{prop}[thm]{PROPOSITION}

\newtheorem{remark}[thm]{REMARK}

 % Issues to resolve

% math notation shortcuts

 % replaced by command \proof
 % replaced by command \endproof - still used inside theorem environment

% blackboard bolds

\newcommand{\mN}{{\mathbb N}}

\newcommand{\mZ}{{\mathbb Z}}

% cal script letters

\newcommand{\cG}{{\mathcal G}}

\newcommand{\cK}{{\mathcal K}}

\newcommand{\cS}{{\mathcal S}}

\newcommand{\cU}{{\mathcal U}}

% fraktur letters

\newcommand{\e}{{\epsilon}}

\input xy
\xyoption {all}

% greek letters

\begin{document}

\title{Measures and stabilizers of group Cantor actions}

\begin{abstract}
{ 
We consider a minimal equicontinuous action of a finitely generated group $G$ on a Cantor set $X$ with invariant probability measure $\mu$, and stabilizers of points for such an action. We give sufficient conditions under which there exists a subgroup $H$ of $G$ such that the set of points in $X$ whose stabilizers are conjugate to $H$ has full measure. The conditions are that the action is locally quasi-analytic and locally non-degenerate. An action is locally quasi-analytic if its elements have unique extensions on subsets of uniform diameter. The condition that the action is locally non-degenerate is introduced in this paper. We apply our results to study the properties of invariant random subgroups induced by minimal equicontinuous actions on Cantor sets and to certain almost one-to-one extensions of equicontinuous actions. 
}
\end{abstract}

\author{Maik Gr\"oger}
\address{Maik Gr\"oger, Faculty of Mathematics, University of Vienna, Oskar-Morgenstern-Platz 1, 1090 Vienna, Austria \& Faculty of Mathematics and Computer Science, Jagiellonian University in Krakow, ul. \L{}ojasiewicza 6, 30‐348 Krak\'ow, Poland}
\email{maik.groeger@univie.ac.at}

\author{Olga Lukina}
\address{Olga Lukina, Faculty of Mathematics, University of Vienna, Oskar-Morgenstern-Platz 1, 1090 Vienna, Austria}
 \email{olga.lukina@univie.ac.at}

% \date{}

\thanks{Affiliation: MG \& OL: Faculty of Mathematics, University of Vienna, Oskar-Morgenstern-Platz 1, 1090 Vienna, Austria; MG: Faculty of Mathematics and Computer Science, Jagiellonian University in Krakow, ul. \L{}ojasiewicza 6, 30‐348 Krak\'ow, Poland}

 \thanks{2010 {\it Mathematics Subject Classification}. Primary 37B05, 37A15, 22F10; Secondary 22F50, 37E25}

\thanks{MG is supported by DFG grant GR 4899/1-1, OL is supported by FWF Project P31950-N35}

\thanks{Version date: August 13, 2020}

\date{}

\keywords{Equicontinuous group actions, Cantor sets, holonomy, measures, invariant random subgroups, stabilizers, locally quasi-analytic actions, locally non-degenerate actions}

\maketitle

%\tableofcontents

%\vfill
%\eject

\section{Introduction}\label{sec-intro}

Let $G$ be a finitely generated group, and suppose $G$ acts on a Cantor set $X$ preserving a probability measure $\mu$. We denote the action by $(X,G,\Phi,\mu)$, where the assignment $\Phi: G \to Homeo(X)$ is a homomorphism, and use the short notation $g \cdot x = \Phi(g)(x)$ for the action of $g \in G$ on $x \in X$ throughout the paper. Throughout the paper, a \emph{clopen} set is a closed and open set in $X$.

Let $G_x = \{ g \in G \mid g \cdot x = x\}$ be the \emph{stabilizer}, or the \emph{isotropy subgroup} of the action of $G$ at $x$. A stabilizer $G_x$ is trivial if $G_x = \{e\}$, where $e$ is the identity in $G$. The action $(X,G,\Phi, \mu)$ is \emph{essentially free} if the subset of points with trivial stabilizers has full measure in $G$. If $y$ is in the orbit of $x$, then their stabilizers are conjugate subgroups, that is, $G_y = g \, G_x \, g^{-1}$ for some $g \in G$ such that $y = g \cdot x$. The following problem is natural.

\begin{prob}\label{prob-111}
 Let $(X,G,\Phi,\mu)$ be a group action with an invariant ergodic probability measure $\mu$. Formulate conditions on the action, under which there exists a subgroup $H \subset G$ such that stabilizers of points in a full measure subset of $X$ are conjugate to $H$.
\end{prob}

For example, if an action is essentially free, then for  $H$   the trivial group,   there is a full measure subset of $X$ of points with   trivial stabilizers. On the other hand, there exist many actions where no point has trivial stabilizer, and non-trivial stabilizers differ for different points. Vershik \cite{Vershik2011,Vershik2012} introduced the notion of an \emph{extremely non-free action}, for which the subset of points with pairwise distinct stabilizers has full measure in $X$. Many extremely non-free actions arise as self-similar actions of subgroups of the automorphism group $Aut(T)$ of a $d$-ary rooted tree $T$. Here the Cantor set $X$ is the boundary $\partial T$ of the tree $T$. Every automorphism $g \in Aut(T)$ corresponds to a homeomorphism in $Homeo(\partial T)$, and the measure $\mu_{\partial T}$ on $\partial T$ descends from the Haar measure on the closure of the action in $Homeo(\partial T)$. Every such action on $\partial T$ is equicontinuous with respect to the standard metric, see Section \ref{subsec-equicts} for definitions and details.

Any minimal equicontinuous action of a finitely generated group $G$ on a Cantor set $X$ is conjugate to an action of a subgroup of $Aut(T)$ on $\partial T$, for an appropriate choice of a tree $T$. The tree $T$ need not be $d$-ary and the action of $G$ need not be self-similar. Recent work of the second author has been devoted to developing invariants, quantifying the complexity of minimal equicontinuous actions and classifying them up to local conjugacy \cite{DHL2016,DHL2017,HL2019-2,HL2019}. 

In this paper, we give an answer to Problem \ref{prob-111} in the case when $(X,G,\Phi,\mu)$ is minimal and equicontinuous. The complexity of the action $(X,G,\Phi,\mu)$ is quantified using the notion of a \emph{locally quasi-analytic} action, introduced in \cite{ALC2009}, and studied for actions on Cantor sets in \cite{DHL2016,DHL2017,HL2019-2,HL2019}. The notion of a \emph{locally non-degenerate} action is introduced, which is key to our results, and examples of actions which are locally non-degenerate or locally degenerate are given. Known results about actions of self-similar groups on $d$-ary trees are obtained as consequences of our theorems. We apply our results to study properties of invariant measures on the space ${\rm Sub}(G)$ of closed subgroups of $G$, and certain almost one-to-one extensions of equicontinuous group actions.

\medskip
Unless specifically mentioned otherwise, $X$ is a Cantor set, $G$ is a finitely generated group, and $\mu$ is the invariant ergodic probability measure for a minimal equicontinuous action $(X,G,\Phi,\mu)$. The action $(X,G,\Phi,\mu)$ is uniquely ergodic. Recall that unique ergodicity is a topological property of $(X,G,\Phi,\mu)$, that is, it is invariant under topological conjugacy.

We now give precise statements of our results. When we refer to purely topological properties of the action, which do not require a measure, we omit the measure $\mu$ from the notation. 

An action $(X,G,\Phi)$ is \emph{topologically free} if the set of points with non-trivial stabilizer is meager in $X$. Since a meager set has empty interior, for a topologically free action the set of points with trivial stabilizer is dense in $X$. If an action is  minimal and essentially free, then it is topologically free. The converse need not hold, and there are minimal topologically free actions which are not essentially free \cite{AE2007,BG2004}. 

For actions where every point has non-trivial stabilizer, we can generalize the concept of an essentially free action by distinguishing \emph{points with trivial or non-trivial holonomy}, as we explain now.

Let $g \in G$, and let $x \in X$ such that $g \cdot x = \Phi(g)(x) = x$. The homeomorphism $\Phi(g)$ either fixes every point in an open neighborhood of $x \in X$, or it induces a non-trivial map on any open neighborhood of $x \in X$. We define the \emph{neighborhood stabilizer} of $x \in X$ by
 \begin{align}\label{eq-nbhdstab}[G]_x = \{g \in G_x \mid g|V_g = id \textrm{ for some open }V_g \owns x\}.\end{align}
The term neighborhood stabilizer comes from  \cite{Vor2012}. Note that the  open  set $V_g$ in \eqref{eq-nbhdstab} may depend on $g \in G$.  In Definition \ref{defn-holpoints} below, $X$ can be any metric space equipped with an action, and the action need not be either minimal or equicontinuous or measure-preserving.

\begin{defn}\label{defn-holpoints} Let $X$ be a metric space and  let $(X,G,\Phi)$ be an action.
\begin{enumerate}
\item We say that $g \in G$ \emph{has trivial holonomy} at $x \in X$ if $g \in [G]_x$, and otherwise $g$ \emph{has non-trivial holonomy} at $x$.
\item We say that $x \in X$ is a \emph{point with trivial holonomy}, or a \emph{point without holonomy}, if $G_x = [G]_x$. Otherwise $x \in X$ is a \emph{point with non-trivial holonomy}, or a \emph{point with holonomy}.
\end{enumerate}
\end{defn}

The terminology in Definition \ref{defn-holpoints} is standard in foliation theory, where it was developed in a more general context of actions of pseudogroups of local homeomorphisms on compact spaces. We recall a basic result by Epstein, Millett and Tischler \cite{EMT1977} for foliated spaces, which reads for the case of actions of groups on compact spaces as follows.

\begin{thm}\label{thm-residualwoholonomy}
Let $(X,G,\Phi)$ be an action of a countable group on a compact Hausdorff space. The set of points without holonomy is a residual (dense $G_\delta$) subset of $X$.
\end{thm}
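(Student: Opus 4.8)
The plan is to reduce the statement to the Baire category theorem by exhibiting the set of points with non-trivial holonomy as a countable union of closed nowhere dense sets. For each $g \in G$ I would consider the fixed-point set
\[
F_g = \{x \in X \mid \Phi(g)(x) = x\},
\]
which is closed because $\Phi(g)$ and the identity are continuous maps of $X$ into the Hausdorff space $X$, so that $F_g$ is their equalizer. Write $\partial F_g = F_g \setminus \mathrm{int}(F_g)$ for the frontier of $F_g$.

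The key observation is a reformulation of Definition \ref{defn-holpoints} in terms of these sets. By \eqref{eq-nbhdstab}, the condition $g \in [G]_x$ says that $g \cdot x = x$ and that $\Phi(g)$ restricts to the identity on some open neighborhood of $x$; this is precisely the assertion that $x \in \mathrm{int}(F_g)$, and membership in $\mathrm{int}(F_g) \subseteq F_g$ already forces $g \cdot x = x$. Hence $g$ has non-trivial holonomy at $x$ exactly when $x \in \partial F_g$, and therefore $x$ is a point with non-trivial holonomy exactly when $x \in \partial F_g$ for some $g \in G$. Thus the set of points with holonomy equals $\bigcup_{g \in G} \partial F_g$.

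Next I would check that each $\partial F_g$ is closed and nowhere dense: it is closed as the intersection $F_g \cap (X \setminus \mathrm{int}(F_g))$ of two closed sets, and it has empty interior because any open subset of $F_g$ is contained in $\mathrm{int}(F_g)$. Since $G$ is countable, the set of points without holonomy, being the complement $\bigcap_{g \in G} \left( X \setminus \partial F_g \right)$, is a countable intersection of open dense subsets of $X$, hence a $G_\delta$ set; and as a compact Hausdorff space is a Baire space, this $G_\delta$ set is dense. This yields the claim.

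I do not anticipate a serious obstacle here. The only steps that require attention are the translation in the second paragraph between the neighborhood-stabilizer condition \eqref{eq-nbhdstab} and membership in $\mathrm{int}(F_g)$, and making sure the two topological hypotheses are used where needed — Hausdorffness for the closedness of $F_g$, and (local) compactness for the Baire property. In the setting relevant to the rest of the paper $X$ is a Cantor set, so the Baire category theorem is available in its most elementary metric form.
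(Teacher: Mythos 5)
Your proof is correct, and it is the standard argument: the paper itself does not prove Theorem \ref{thm-residualwoholonomy} but only cites Epstein--Millett--Tischler \cite{EMT1977}, and your decomposition of the set of points with holonomy as $\bigcup_{g\in G}\bigl(F_g\setminus \mathrm{int}(F_g)\bigr)$, with each frontier closed and nowhere dense, followed by Baire category, is exactly the expected proof. The translation between $g\in[G]_x$ and $x\in\mathrm{int}(F_g)$ is handled correctly, and the hypotheses (Hausdorff for closedness of $F_g$, compactness for the Baire property, countability of $G$ for the countable union) are all used where they should be.
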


If $(X,G,\Phi)$ is topologically free, then points in $X$ with trivial stabilizers have trivial holonomy. Conversely, suppose $g \in G$ such that $g \cdot x = x$. Since the set of points with non-trivial stabilizers has empty interior, then there is no open set $U \owns x$ such that $g$ fixes every point in $U$. Thus for a topologically free action, a point $x \in X$ has non-trivial stabilizer if and only if it has non-trivial holonomy. Then a measure-preserving topologically free action $(X,G,\Phi,\mu)$ is essentially free if and only if the set of points with non-trivial holonomy in $X$ has measure zero. 

An analogue of an essentially free action in the case when no point in $(X,G,\Phi,\mu)$ has trivial stabilizer is an action where the set of points with trivial holonomy has full measure in $X$.

\medskip
 The concept of a \emph{quasi-analytic action} was first introduced by Haefliger \cite{Haefliger1985} for actions on connected spaces, and it was generalized to include totally disconnected spaces by \'Alvarez L\'opez and Candel \cite{ALC2009}. We give a local version of this definition as formulated in \cite{DHL2017}. Actions with or without this property were studied in \cite{DHL2016,DHL2017,HL2018,HL2019,HL2019-2}. 

  \begin{defn} \cite[Definition~9.4]{ALC2009} \label{def-LQA} Let $X$ be a Cantor set with a metric $D$, and let $G$ be a countable group. An action $(X,G,\Phi)$  is   \emph{locally quasi-analytic}, or simply  \emph{LQA}, if there exists $\epsilon > 0$ such that for any open set $U \subset X$ with ${\rm diam} (U) < \e$, any open subset $V \subset U $ and any elements $g_1 , g_2 \in G$
 \begin{equation}\label{eq-lqaa}
  \text{if } ~~ \Phi(g_1)|V = \Phi(g_2)|V, ~ \text{ then}~~ \Phi(g_1)|U = \Phi(g_2)|U. 
\end{equation}
The action $(X,G,\Phi)$ is \emph{quasi-analytic} if one can choose $U = X$.
\end{defn}

Thus an action is LQA if and only if for all $g \in G$ the homeomorphisms $\Phi(g)$ have unique extensions on subsets of uniform diameter in $X$. 
Topologically free actions are quasi-analytic \cite[Proposition 2.2]{HL2018}. Examples of actions which are locally quasi-analytic but not quasi-analytic are easy to construct: take any quasi-analytic action, and add a finite number of elements which fix an open subset of $X$ but act non-trivially on its complement, as in \cite[Example A.4]{HL2018}. A minimal equicontinuous action of a finitely generated  nilpotent group is always locally quasi-analytic, in fact, a similar property holds for a more general class of groups with Noetherian property  \cite[Theorem 1.6]{HL2018}. Examples of actions which are not LQA include the actions of finite index torsion-free subgroups of $SL(n,\mZ)$, for $n \geq 3$, in \cite{HL2019}. 

\medskip
The idea to use Lebesgue density to study the measure of points with non-trivial holonomy was introduced by Hurder and Katok \cite{HK1987}, for linear holonomy in $C^1$ foliations of a smooth manifold $M$. In the setting of \cite{HK1987}, the differentiability of a foliation implies that for each holonomy linear map $L$, the Lebesgue density of the set of all fixed points of $L$ at fixed points with non-trivial holonomy is bounded away from $1$. Then the Lebesgue density theorem implies that the set of points with non-trivial linear holonomy must have zero measure in the transverse Euclidean section to the foliation. 

In absence of differentiability for actions on Cantor sets, we introduce the notion of a \emph{locally non-degenerate} action. It will follow that if an action is locally non-degenerate, then for any $g \in G$ the Lebesgue density of the set of fixed points of $g$ at points with non-trivial holonomy is bounded away from $1$. This condition on the action is combinatorial and it is defined using a representation of $(X,G,\Phi,\mu)$ onto the boundary of a spherically homogeneous tree $T$, which can be considered as a choice of coordinates for $(X,G,\Phi,\mu)$. We discuss such representations in more detail below and in Section \ref{subsec-tree}. We will show that the property of local non-degeneracy is invariant under conjugacy of actions, and so it is an invariant of an equivalence class of representations, associated to $(X,G,\Phi,\mu)$. 

Local non-degeneracy of an action implies the property of Lebesgue densities, discussed above, but we do not know at the moment if the converse statement is true. Since the precise relationship between Lebesgue densities and the notion of local non-degeneracy  is not clear at the moment, we are not able to give an intrinsic, coordinate-free definition of a locally non-degenerate action. A similar state of affairs is not uncommon in the study of foliated spaces, where often the only way to study an object is in terms of equivalence classes of coordinate charts representing it. We discuss this more below.

We introduce the concept of a \emph{locally non-degenerate action} for an action of a subgroup $G \subset Aut(T)$ on a spherically homogeneous tree $T$ first, see Section \ref{subsec-tree} for more details about actions on trees.

Let $V = \bigcup_{\ell \geq 0} V_\ell$ be the vertex set of a tree $T$, where $V_\ell$ is the $\ell$-th \emph{level} of $T$. A tree $T$ is \emph{rooted} if $V_0$ is a singleton. A tree $T$ is \emph{spherically homogeneous} if for every $\ell \geq 1$ there is a positive integer $n_\ell \geq 2$ such that every vertex in $V_{\ell-1}$ is joined by edges to $n_\ell$ vertices in $V_\ell$. The sequence $n = (n_1,n_2,\ldots)$ is called the \emph{spherical index} of $T$.  A spherically homogeneous tree $T$ is $d$-ary if there is a positive integer $d \geq 2$ such that $n_\ell = d$ for all $\ell \geq 1$. 

The boundary $\partial T$ is the set of all connected paths in $T$. With product topology, it is a Cantor set. For $v \in V$, we denote by $T_{v}$ the subtree through the vertex $v$, and by $\partial T_{v}$ the boundary of $T_{v}$. Then $\partial T_{v}$ is identified with a clopen subset of $\partial T$. Denote by $\mu_{\partial T}$ the counting measure on $\partial T$. An automorphism $g \in Aut(T)$ naturally induces a homeomorphism of $\partial T$, thus defining the homomorphism $\Phi$. So if $G \subset Aut(T)$, then we write $(\partial T, G, \mu_{\partial T})$ for the action on the tree, omitting $\Phi$ from the notation.

\begin{defn}\label{defn-uniformnonconstanti}
Let $T$ be a spherically homogeneous tree, and let $G \subset Aut(T)$. The action of $g \in G$ on $\partial T$ is \emph{locally non-degenerate} if there exists $0 < \alpha_g \leq 1$ such that for any vertex $v \in V$, if $g$ fixes ${v}$ and $g|\partial T_{v} \ne id$, then
   \begin{align} \frac{ \mu_{\partial T}\left( \{ {\bf w} \in \partial T_{v} \mid g \cdot {\bf w} \ne {\bf w}\} \right)}{\mu_{\partial T}(\partial T_{v})} \geq \alpha_g.\end{align}

The action $(\partial T, G, \mu_{\partial T})$ is \emph{locally non-degenerate} if and only if the action of every $g \in G$ on $\partial T$ is locally non-degenerate.

If an action of $g \in G$, or the action $(\partial T,G,\mu_{\partial T})$ is not locally non-degenerate, then it is \emph{locally degenerate}.
\end{defn}

In particular, the action of $g \in G$ is locally non-degenerate if, for any decreasing sequence of nested clopen sets $\{\partial T_{v_\ell}\}_{v_\ell \in V_\ell, \ell \geq 1}$, where $g|\partial T_{v_\ell} \ne id$, the ratio of the measure of the set of points in $\partial T_{v_\ell}$, not fixed by the action of $g$, to the measure of $\partial T_{v_\ell}$ has a lower bound. Intuitively, we want to avoid actions, where for some $\{\partial T_{v_\ell}\}_{v_\ell \in V_\ell, \ell \geq 1}$, the set of fixed points of $g$ occupies a larger and larger proper subset of $\partial T_{v_\ell}$, as $\ell \to \infty$. Another way to phrase that is by saying that the proportion of points in an open ball, moved by the action of $g \in G$, is at least $\alpha_g$ for any open ball on which $g$ acts non-trivially. 

As discussed in Section \ref{subsec-tree}, every minimal equicontinuous action $(X,G,\Phi)$ on a Cantor set $X$ has a representation as an action on the boundary $\partial T$ of a rooted spherically homogeneous tree $T$. More precisely, there exists a (non-unique) tree $T$ and a homeomorphism $\phi: X \to \partial T$, such that for any $g \in G$ the composition $\phi \circ g\circ \phi^{-1}$ is an automorphism of $T$.  The homeomorphism $\phi$ is determined by a choice of a descending sequence $\{U_\ell\}_{\ell \geq 0}$, $U_0 = X$, of clopen subsets of $X$ such that for each $U_\ell$ the translates $\cU_\ell = \{g \cdot U_\ell\}_{g \in G}$ form a finite partition of $X$. Under $\phi$, distinct sets in $\cU_\ell$ correspond to clopen sets $\{\partial T_v\}_{v \in V_\ell}$ in the tree. The group $G$ permutes the sets in the partition $\cU_\ell$, and so defines a permutation of the vertex set $V_\ell$, for $\ell \geq 1$. Inclusions of sets in $\cU_{\ell+1}$ into the sets in $\cU_\ell$ correspond to edges between vertices in $T$, and so $G$ acts on $T$ by automorphisms. It follows that there is an induced injective map $\phi_*: Homeo(X) \to Homeo(\partial T)$, and the pair of maps 
 \begin{align}\label{eq-reprphi}(\phi, \phi_*):(X, \Phi(G)) \to (\partial T,Homeo(\partial T)) \end{align}
gives a representation of $(X,G,\Phi)$ onto the boundary of the tree $T$. Any two such representations are conjugate to $(X,G,\Phi)$, and so to each other. The invariant ergodic measure $\mu$ on $X$ pushes forward to the counting measure $\mu_{\partial T}$ on $\partial T$. The representation $(\phi,\phi_*)$ is not unique, and should be viewed as a choice of coordinates for the action. Thus associated to a minimal equicontinuous action $(X,G,\Phi,\mu)$ there is an equivalence class $\left[(\partial T, \phi_*(\Phi(G)),\mu_{\partial T})\right]$ of actions on trees, where the equivalence relation is given by conjugacy of the induced actions on the boundary of the tree.

\begin{defn}\label{defn-noncaction}
A minimal equicontinuous action $(X,G,\Phi,\mu)$ is \emph{locally non-degenerate} if there exists a representation $(\phi, \phi_*)$ of the action as in \eqref{eq-reprphi}, such that the action of $\phi_*(\Phi(G))$ on $\partial T$ is locally non-degenerate with respect to the counting measure $\mu_{\partial T}$ on $\partial T$.
\end{defn}

Suppose $(X,G,\Phi,\mu)$ is a locally non-degenerate action with respect to a representation $(\phi,\phi_*)$. Suppose $(Y,G,\Psi,\nu)$ is another minimal equicontinuous actions, and $\psi: Y \to X$ conjugates $(X,G,\Phi,\mu)$ and $(Y,G,\Psi,\nu)$. Then $(\phi\circ \psi, (\phi \circ \psi)_*)$ is a representation for $(Y,G,\Psi,\nu)$ such that the action of $(\phi \circ \psi)_*(\Psi(G))$ on $\partial T$ is locally non-degenerate. Thus local non-degeneracy of an action is an invariant of topological conjugacy of actions.

Actions which are locally non-degenerate are common in the study of actions on Cantor sets. We show in Section \ref{subsec-automata}  that actions on $d$-ary trees generated by finite automata are locally non-degenerate. We expect that any action on a Cantor set which is conjugate to a holonomy action on a transverse section of a subset of a foliated $C^1$ manifold is locally non-degenerate. We give examples of actions which are locally degenerate in Section \ref{subsec-nonunonconst}. It is not clear at the moment what type of actions is prevalent among all possible minimal equicontinuous group actions on Cantor sets.

We now can state our main theorem, which specifies conditions under which $X$ contains a set of points with conjugate stabilizers which is both topologically and measure-theoretically large.

\begin{thm}\label{thm-mainmain}
Let $X$ be a Cantor set, let $G$ be a finitely generated group, and let $(X,G,\Phi,\mu)$ be a locally quasi-analytic minimal equicontinuous action. Then the following is true.
\begin{enumerate}
\item There exists a subgroup $H \subset G$ such that the set of points with stabilizers conjugate to $H$ is residual in $X$. 
\item In addition, suppose $(X,G,\Phi,\mu)$ is locally non-degenerate. Then the set of points with stabilizers conjugate to $H$ has full measure in $X$.
\end{enumerate}
\end{thm}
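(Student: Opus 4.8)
The plan is to work entirely on the tree model $(\partial T, G, \mu_{\partial T})$ via a representation $(\phi,\phi_*)$, which is legitimate since both conclusions are conjugacy invariants. The backbone is a \emph{germ/holonomy group} construction: for a nested sequence $U_0 \supset U_1 \supset \cdots$ of clopen sets defining $T$, with $v_\ell \in V_\ell$ the vertex corresponding to $U_\ell$, set $G_\ell = \{g \in G_{v_\ell} \mid g|\partial T_{v_\ell} = id\}$, the group of elements fixing $U_\ell$ pointwise, and put $H = \bigcup_{\ell \geq 0} G_\ell$. This $H$ is the candidate subgroup. First I would record elementary facts: $G_\ell \subset G_{\ell+1}$, so $H$ is a subgroup; and for the point ${\bf x} = \bigcap_\ell \partial T_{v_\ell} \in \partial T$ we have $H \subset G_{\bf x}$, with equality precisely when ${\bf x}$ is a point without holonomy (by the LQA hypothesis, if $g$ fixes ${\bf x}$ but is not in $H$, then $g$ moves points arbitrarily close to ${\bf x}$, hence has nontrivial holonomy). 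Part (1) then follows from Theorem \ref{thm-residualwoholonomy}: on the residual set of points without holonomy the stabilizer equals the neighborhood stabilizer $[G]_y$, and the LQA property forces all these neighborhood stabilizers to be conjugate — indeed for $y, y'$ both without holonomy, minimality gives $g \in G$ with $g \cdot y$ as close to $y'$ as one likes (within the LQA diameter $\e$), and LQA plus the germ description identifies $g [G]_y g^{-1}$ with $[G]_{y'}$. So the residual set of points without holonomy is exactly the set of points with stabilizer conjugate to $H$.

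For part (2), the goal is to show the set $W$ of points \emph{with} holonomy has $\mu_{\partial T}(W) = 0$. Fix a finite generating set $S$ of $G$; since a point has holonomy iff some $g \in G$ fixes it with nontrivial holonomy at it, and the holonomy groups are built from finitely-many-generator words, it suffices (after a standard reduction using that $W = \bigcup_{g} W_g$ where $W_g$ is the set of points fixed by $g$ with $g$ having nontrivial holonomy there, and countability of $G$) to bound each $\mu_{\partial T}(W_g)$. The key geometric input is a Lebesgue-density argument adapted to the tree: for a fixed $g \in G$, consider the clopen sets $\partial T_{v}$ with $g \cdot v = v$ and $g|\partial T_v \neq id$. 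By local non-degeneracy, on each such $\partial T_v$ the proportion of points moved by $g$ is at least $\alpha_g > 0$; equivalently the proportion of \emph{fixed} points of $g$ inside $\partial T_v$ is at most $1 - \alpha_g < 1$. Now a point $z \in W_g$ is fixed by $g$ and has nontrivial holonomy, so along the nested sequence $\partial T_{w_0} \supset \partial T_{w_1} \supset \cdots$ of clopen sets shrinking to $z$, infinitely many (in fact, cofinally many) of the $\partial T_{w_\ell}$ satisfy $g|\partial T_{w_\ell} \neq id$ while still containing $z$ as a fixed point. Hence the density of $\mathrm{Fix}(g)$ at $z$ computed along this sequence does not converge to $1$. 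But $W_g \subset \mathrm{Fix}(g)$, and a martingale/Lebesgue-density theorem on the Cantor set (the measure $\mu_{\partial T}$ is a product measure, so the Lebesgue differentiation theorem along the defining clopen filtration holds) says that at $\mu_{\partial T}$-a.e. point of $\mathrm{Fix}(g)$ the density of $\mathrm{Fix}(g)$ is $1$. Therefore $\mu_{\partial T}(W_g) = 0$, and summing over the countably many $g$ gives $\mu_{\partial T}(W) = 0$.

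The main obstacle I anticipate is the density step: one must be careful that the clopen filtration along which one differentiates is exactly the one coming from the chosen representation, and that the local non-degeneracy bound $\alpha_g$ is applied at the correct vertices — namely those $w_\ell$ with $g \cdot w_\ell = w_\ell$ and $g|\partial T_{w_\ell} \neq id$, and one must argue there are cofinally many such $\ell$ for a point with holonomy. This last point is where LQA re-enters: if only finitely many $w_\ell$ had $g|\partial T_{w_\ell} \neq id$, then $g$ would fix a whole neighborhood of $z$, contradicting that $z \in W_g$ has nontrivial holonomy for $g$; so cofinally many levels see a nontrivial restriction, and at each such level the density of $\mathrm{Fix}(g)$ drops below $1-\alpha_g < 1$, blocking convergence to $1$. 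A secondary technical point is organizing the reduction from "$W$" to "$\bigcup_g W_g$" and confirming $H$ is genuinely independent of the chosen nested sequence up to conjugacy — but this follows from the part (1) analysis, since $H$ must be conjugate to the stabilizer of any point without holonomy, and such points exist and are all mutually "conjugate-stabilized" by minimality and LQA.
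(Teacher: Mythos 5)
Your proposal is correct and follows essentially the same route as the paper, which deduces part (1) from Theorem \ref{thm-residualwoholonomy} together with Theorem \ref{thm-stabilizers} (your argument that LQA forces the neighborhood stabilizers of all holonomy-free points in a single small adapted set to coincide, hence to be conjugate across translates, is exactly the paper's proof of that theorem) and part (2) from the Lebesgue-density argument of Theorem \ref{thm-holonomymeasure0} (density of ${\rm Fix}(g)$ at a point with non-trivial holonomy is trapped below $1-\alpha_g$, so Miller's density theorem forces $\mu(W_g)=0$). The only inaccuracy is cosmetic: the fact that $g|\partial T_{w_\ell}\neq \mathrm{id}$ for all $\ell$ at a point where $g$ has non-trivial holonomy is immediate from the definition of holonomy and does not use LQA, so your remark that ``LQA re-enters'' at that step is a misattribution that does not affect the argument.
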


If $(X,G,\Phi,\mu)$ is essentially free, then both statements of Theorem \ref{thm-mainmain} hold with $H$ the trivial subgroup.

The proof of Theorem \ref{thm-mainmain} consists of two ingredients, which are of interest in their own right. Below we discuss these ingredients and their applications.

\medskip
To prove Theorem \ref{thm-mainmain}, we study the properties of points with trivial or non-trivial holonomy in $X$. Throughout the paper,  we denote the set of points with trivial holonomy by
  \begin{align}\label{eq-ptX0}X_0 = \{x\in X \mid G_x = [G]_x\}. \end{align}

\begin{thm}\label{thm-stabilizers}
Let $(X,G,\Phi)$ be a minimal equicontinuous action of a finitely generated group $G$ on a Cantor set $X$. Then the set $\{G_x \mid x \in X_0\}$ of stabilizers of points with trivial holonomy is a finite set of conjugate subgroups if and only if  the action $(X,G,\Phi)$ is locally quasi-analytic. \end{thm}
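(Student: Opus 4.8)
The statement is an ``if and only if'', so I would prove the two implications separately.

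\emph{($\Leftarrow$) Assume the action is LQA, and show $\{G_x \mid x \in X_0\}$ is a finite set of conjugate subgroups.} First I would use the representation $(\phi,\phi_*)$ of $(X,G,\Phi)$ as an action on the boundary $\partial T$ of a spherically homogeneous tree $T$, so that each level $V_\ell$ corresponds to a finite clopen partition $\cU_\ell = \{g\cdot U_\ell\}_{g\in G}$ of $X$ with $\mathrm{diam}(U_\ell)\to 0$. Fix $\e>0$ from the LQA condition and choose $\ell$ large enough that every set in $\cU_\ell$ has diameter $<\e$. Pick $x\in X_0$ and let $C\in\cU_\ell$ be the piece containing $x$. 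The key claim is that for $x$ with trivial holonomy, the stabilizer $G_x$ is determined by the piece $C$: concretely, I would show $G_x = G_C := \{g\in G\mid g|C = id\}$, the pointwise stabilizer of the clopen set $C$. The inclusion $G_C\subseteq G_x$ is clear. For the reverse, take $g\in G_x$; since $x$ has trivial holonomy, $g$ fixes pointwise some open neighborhood $V\owns x$; shrinking, we may assume $V\subseteq C$ is open with $\mathrm{diam}(V)<\e$ (it sits inside $C$). Then $\Phi(g)|V = \Phi(e)|V = id$, and LQA applied to $V\subseteq C$ forces $\Phi(g)|C = id$, i.e.\ $g\in G_C$. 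Thus the map $x\mapsto G_x$ on $X_0$ factors through the finite set $\cU_\ell$, so $\{G_x\mid x\in X_0\}$ is finite. For conjugacy: by minimality, any two pieces $C, C'$ of $\cU_\ell$ satisfy $C' = h\cdot C$ for some $h\in G$ (the $G$-translates of $U_\ell$ are exactly the elements of $\cU_\ell$), and then $G_{C'} = h\,G_C\,h^{-1}$; combined with minimality of the action on $X_0$ (which is dense and $G$-invariant, so meets every piece), all the stabilizers $G_x$, $x\in X_0$, are conjugate to a single subgroup $H := G_C$.

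\emph{($\Rightarrow$) Assume $\{G_x\mid x\in X_0\}$ is a finite set of conjugate subgroups; show the action is LQA.} I would argue by contrapositive: suppose the action is \emph{not} LQA and produce infinitely many non-conjugate stabilizers among points of $X_0$, or at least violate finiteness. Failure of LQA means: for every $\e>0$ there are an open $U$ with $\mathrm{diam}(U)<\e$, an open $V\subseteq U$, and $g_1,g_2\in G$ with $\Phi(g_1)|V=\Phi(g_2)|V$ but $\Phi(g_1)|U\ne\Phi(g_2)|U$. Setting $g = g_2^{-1}g_1$, this says $g$ fixes $V$ pointwise but not $U$; i.e.\ there is an element that is the identity on an arbitrarily small open set $V$ but nontrivial arbitrarily close to it. Applying this at a sequence $\e_n\to 0$ yields elements $g_n$ and nested-in-scale clopen sets $V_n$ with $g_n|V_n=id$, $g_n$ nontrivial near $V_n$. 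I then want to locate, for each $n$, a point $y_n\in X_0$ in (or near) $V_n$ whose neighborhood stabilizer properly contains $g_n$-type data that was absent at smaller scales, so that the stabilizers $G_{y_n}$ cannot all fall into one finite conjugacy class. Here Theorem~\ref{thm-residualwoholonomy} is essential: $X_0$ is residual, hence dense and met by every nonempty open set, so such $y_n$ exist in $V_n$ and in $U_n\setminus V_n$. The subgroups $[G]_{y_n} = G_{y_n}$ then detect the scale at which $g_n$ ``turns on,'' and I would extract an infinite antichain (or infinitely many conjugacy classes) by a diagonal/pigeonhole argument against the finiteness assumption.

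\textbf{Main obstacle.} The forward direction ($\Rightarrow$, equivalently the contrapositive) is the delicate part. The clean reduction ``$G_x = G_C$ for $x\in X_0$'' in the easy direction uses LQA in an essential way; without it, the relationship between the neighborhood stabilizer $[G]_x$ and the pointwise stabilizers of clopen pieces becomes subtle, because an element can fix a point with trivial holonomy yet act nontrivially on every clopen piece of \emph{every} fixed level's partition (the ``scale of triviality'' depends on the element). Turning a single failure of the LQA inequality at scale $\e$ into genuinely infinitely many non-conjugate stabilizers requires care: one must (i) ensure the witnessing points can be taken in $X_0$ (via Theorem~\ref{thm-residualwoholonomy}), (ii) control conjugacy — two stabilizers at different scales might still be abstractly conjugate in $G$ for accidental reasons — and (iii) rule out that the finitely many conjugacy classes simply ``happen'' to absorb all the scales. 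I expect the cleanest route is to phrase everything through the tree representation: non-LQA produces, at infinitely many levels $\ell$, a vertex $v_\ell$ and $g_\ell$ fixing a proper sub-clopen of $\partial T_{v_\ell}$ but not $\partial T_{v_\ell}$ itself, and then the pointwise stabilizers of the nested clopen sets along a path form a strictly increasing (hence infinite) chain of subgroups, each realized as $G_y$ for some $y\in X_0$ on that path — contradicting finiteness. Making the ``strictly increasing'' and ``realized as $G_y$'' claims precise, uniformly in $\ell$, is where the real work lies.
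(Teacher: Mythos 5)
Your ($\Leftarrow$) direction is correct and is essentially the paper's argument: for $\ell$ with $\mathrm{diam}(U_\ell)<\e$, LQA forces every element fixing a neighborhood of a point of $X_0\cap U_\ell$ to fix all of $U_\ell$, so all such points share the stabilizer $\ker\bigl(G_{U_\ell}\to Homeo(U_\ell)\bigr)$; finiteness of the orbit of $U_\ell$ and minimality then give a single finite conjugacy class.

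The gap is in the ($\Rightarrow$) direction. First, a simplification you are missing: to negate ``finite set of conjugate subgroups'' it suffices to show that $\{G_x\mid x\in X_0\}$ is infinite, so your concern (ii) about controlling conjugacy classes is beside the point --- the paper in fact produces infinitely many \emph{pairwise distinct but mutually conjugate} stabilizers, all attached to points of a single orbit $G(x)$ with $x\in X_0$ (orbits of points without holonomy stay in $X_0$, and minimality places orbit points in every open set, so residuality of $X_0$ is not even needed here). Second, and more seriously, your proposed ``cleanest route'' does not work: the pointwise stabilizers of a nested chain of clopen sets $C_1\supset C_2\supset\cdots$ along a path do form an increasing chain, but they are all contained in $[G]_y\subset G_y$ for the single limit point $y$ of that path, and realizing each $G_{C_\ell}$ as the full stabilizer $G_{y_\ell}$ of some $y_\ell\in X_0$ is precisely the identification ``$G_y=G_C$'' that your own ($\Leftarrow$) argument derives \emph{from} LQA --- it is unavailable, and typically false, when LQA fails. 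The construction that actually works (the paper's) is branching rather than linear: one inductively builds $2^n$ disjoint-in-pairs clopen sets $W_{k_1\cdots k_n}$, $k_i\in\{0,1\}$, each containing a point $y_{k_1\cdots k_n}$ of the fixed orbit $G(x)$, together with elements $g_{k_1\cdots k_i0}$ acting as the identity on $W_{k_1\cdots k_i0}$ but moving every point of the sibling $W_{k_1\cdots k_i1}$; comparing two words at the first digit where they differ shows the corresponding stabilizers are distinct, so $\#\{G_{g\cdot x}\mid g\in G\}\geq 2^n$ for every $n$. The non-LQA hypothesis is invoked once per node of this binary tree to produce the splitting element and, via continuity and minimality, the two children (shrunk to translates of adapted sets, each containing an orbit point). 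Your sketch has the raw ingredients --- density of points without holonomy, elements trivial on a small set but not nearby --- but not this branching mechanism, and without it the passage from ``LQA fails at every scale'' to ``infinitely many distinct stabilizers in $X_0$'' is not bridged.
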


 The set of stabilizers of points with non-trivial holonomy may be infinite even for quasi-analytic actions, see for instance, Example \ref{eq-dihedral}.  The next step in the proof of Theorem \ref{thm-mainmain} is the sufficient condition under which the set of points with trivial holonomy has full measure in $X$.  

\begin{thm}\label{thm-holonomymeasure0}
Let $(X,G,\Phi,\mu)$ be a minimal equicontinuous action of a finitely generated group $G$ on a Cantor set $X$.  Suppose $(X,G,\Phi,\mu)$ is locally non-degenerate. Then the set $X_0$ of points with trivial holonomy has full measure with respect to $\mu$.
\end{thm}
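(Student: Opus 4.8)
The plan is to work with a representation $(\phi, \phi_*)$ of $(X, G, \Phi, \mu)$ on the boundary $\partial T$ of a spherically homogeneous tree $T$ with respect to which the action is locally non-degenerate, so that without loss of generality we may assume $X = \partial T$, $\mu = \mu_{\partial T}$, and $G \subset Aut(T)$ acts with a non-degeneracy constant $\alpha_g > 0$ for each $g$. Since $G$ is finitely generated (indeed even if only countable), the set $X \setminus X_0$ of points with non-trivial holonomy decomposes as a countable union: a point $x$ has non-trivial holonomy precisely when there exists $g \in G_x \setminus [G]_x$, i.e.\ $g$ fixes $x$ but acts non-trivially on every clopen neighborhood of $x$. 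Thus $X \setminus X_0 = \bigcup_{g \in G} F(g)$, where $F(g) = \{x \in \partial T \mid g \cdot x = x \text{ and } g \text{ has non-trivial holonomy at } x\}$. Because $G$ is countable, it suffices to show $\mu(F(g)) = 0$ for each fixed $g \in G$.

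The heart of the argument is a Lebesgue-density estimate for $F(g)$. Fix $g \in G$ and let $\alpha = \alpha_g$. Suppose for contradiction $\mu(F(g)) > 0$. I would like to find a point $x \in F(g)$ at which $F(g)$ has density $1$ along the basis of clopen neighborhoods $\{\partial T_{v_\ell}\}_{\ell \geq 0}$ determined by the tree, with $x = \bigcap_\ell \partial T_{v_\ell}$ and $v_\ell \in V_\ell$. Here one must be slightly careful, since the sets $\partial T_{v_\ell}$ are the natural substitute for metric balls but density theorems are usually stated for the metric; the point is that the martingale/differentiation theorem for the filtration generated by the partitions $\cU_\ell$ (equivalently, the levels $V_\ell$) applies directly to $L^1(\mu_{\partial T})$, so $\mu$-a.e.\ point of $F(g)$ is a density point in this sense. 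Now, since $x \in F(g)$, the element $g$ fixes $x$ and acts non-trivially on every neighborhood of $x$; in particular $g$ fixes the vertex $v_\ell$ (for $\ell$ large enough that $\partial T_{v_\ell}$ is $g$-invariant, which holds for all large $\ell$ since $g \cdot x = x$ and the action permutes the finitely many sets at each level) and $g|\partial T_{v_\ell} \neq id$ for all such $\ell$ (otherwise $g$ would have trivial holonomy at $x$). Applying local non-degeneracy at the vertex $v_\ell$ gives
\begin{equation}\label{eq-planestimate}
\frac{\mu_{\partial T}\big(\{ \mathbf{w} \in \partial T_{v_\ell} \mid g \cdot \mathbf{w} \neq \mathbf{w}\}\big)}{\mu_{\partial T}(\partial T_{v_\ell})} \geq \alpha
\end{equation}
for all large $\ell$. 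But the set $\{\mathbf{w} \in \partial T_{v_\ell} \mid g \cdot \mathbf{w} \neq \mathbf{w}\}$ is disjoint from $F(g)$ (every point of $F(g)$ is fixed by $g$), so \eqref{eq-planestimate} forces the complement of $F(g)$ inside $\partial T_{v_\ell}$ to have relative measure at least $\alpha$ for all large $\ell$, i.e.\ the density of $F(g)$ at $x$ is at most $1 - \alpha < 1$. This contradicts $x$ being a density point, so $\mu(F(g)) = 0$, and summing over the countably many $g \in G$ gives $\mu(X \setminus X_0) = 0$.

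The step I expect to be the main obstacle — or at least the one requiring the most care — is the density/differentiation argument in the tree setting: justifying that $\mu$-a.e.\ point of a positive-measure set $F(g)$ is a density point with respect to the filtration by cylinder sets $\{\partial T_{v_\ell}\}$. The clean route is to invoke Doob's martingale convergence theorem for the conditional expectations $\mathbb{E}[\mathbf{1}_{F(g)} \mid \mathcal{F}_\ell]$, where $\mathcal{F}_\ell$ is the finite $\sigma$-algebra generated by $V_\ell$; these converge $\mu$-a.e.\ to $\mathbf{1}_{F(g)}$, and on $F(g)$ the limit is $1$, which is exactly the density statement needed. A secondary point to nail down is that for $x \in F(g)$ and all sufficiently large $\ell$, the cylinder $\partial T_{v_\ell}$ containing $x$ is genuinely $g$-invariant and $g$ restricted to it is non-trivial — the first follows from $g \cdot x = x$ together with the fact that $g$ permutes the finite collection of level-$\ell$ cylinders (so the $g$-orbit of $\partial T_{v_\ell}$ eventually stabilizes, hence is a fixed point of the permutation for large $\ell$), and the second is precisely the definition of $g$ having non-trivial holonomy at $x$. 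One should also double-check that local non-degeneracy of the action $(X, G, \Phi, \mu)$ is exactly what licenses passing to such a representation; this is the content of Definition \ref{defn-noncaction}, so no additional work is needed there.
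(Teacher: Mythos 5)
Your proposal is correct and follows essentially the same route as the paper: both reduce to showing that, for each fixed $g\in G$, the set of fixed points at which $g$ has non-trivial holonomy is $\mu$-null, by observing that local non-degeneracy forces the relative measure of ${\rm Fix}(g)$ in the cylinders $\partial T_{v_\ell}$ around such a point to be at most $1-\alpha_g$, contradicting the Lebesgue density theorem, and then taking a countable union over $g$. The only cosmetic difference is that you derive the density statement from martingale convergence along the filtration by cylinder sets, whereas the paper invokes Miller's density theorem for ultrametric Polish spaces (Theorem \ref{thm-polishlebesgue}); since balls in the metric \eqref{eq-metricd} are exactly the cylinder sets, these amount to the same statement.
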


The proof of Theorem \ref{thm-holonomymeasure0} uses the technique similar to that of Hurder and Katok \cite[Proposition 7.1]{HK1987}, who studied linear holonomy in $C^1$ foliations of smooth manifolds $M$. Our setting is very different from the setting of \cite{HK1987}, as for actions on Cantor sets there are no differentials and, in fact, the notion of linear holonomy does not make sense. The condition that the action $(X,G,\Phi,\mu)$ is locally non-degenerate imposes a degree of regularity which in the smooth setting comes from differentiability.

Bergeron and Gaboriau \cite{BG2004}, and also Ab\'ert and Elek \cite{AE2007}, gave examples of group actions on Cantor sets which are topologically free and not essentially free. By the discussion after Theorem \ref{thm-residualwoholonomy}, in these examples the set of points with non-trivial holonomy has full measure. We give more examples of this type in Section \ref{subsec-counterex}. As a corollary to Theorem \ref{thm-holonomymeasure0} we obtain a criterion when a topologically free minimal equicontinuous action is essentially free.

\begin{cor}\label{cor-topessentiallyfree}
If a minimal equicontinuous action $(X,G,\Phi,\mu)$ is locally non-degenerate and topologically free then it is essentially free.
\end{cor}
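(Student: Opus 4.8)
\textbf{Proof plan for Corollary \ref{cor-topessentiallyfree}.}

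The plan is to combine Theorem \ref{thm-holonomymeasure0} with the structural facts about topologically free actions established in the discussion following Theorem \ref{thm-residualwoholonomy}. Since $(X,G,\Phi,\mu)$ is assumed locally non-degenerate, Theorem \ref{thm-holonomymeasure0} gives immediately that the set $X_0 = \{x \in X \mid G_x = [G]_x\}$ of points with trivial holonomy has full measure with respect to $\mu$. So it suffices to show that for a topologically free action, every point $x \in X_0$ has trivial stabilizer, i.e. $G_x = \{e\}$; then the full-measure set $X_0$ consists of points with trivial stabilizers, which is precisely the definition of essential freeness.

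The key step is the identity $[G]_x = \{e\}$ for all $x \in X$ when the action is topologically free. Suppose $g \in [G]_x$, so $g$ fixes every point of some open neighborhood $V_g \owns x$. By minimality, the $G$-translates of $V_g$ cover $X$, and on each translate $h \cdot V_g$ the element $h g h^{-1}$ acts as the identity; hence the fixed-point set of some conjugate of $g$ has nonempty interior. If $g \ne e$, this would contradict topological freeness: the set of points with non-trivial stabilizer for the action is meager, hence has empty interior, yet here it would contain the nonempty open set $V_g$ (on which $g$ acts trivially but, being $\ne e$, some point is nonetheless a point whose stabilizer contains $g$) — more directly, $V_g$ itself is an open set of points all of whose stabilizers contain $g$, so if $g \ne e$ then $V_g$ lies in the set of points with non-trivial stabilizer, contradicting meagerness of that set. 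Therefore $g = e$, proving $[G]_x = \{e\}$ for every $x \in X$.

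Putting the pieces together: for $x \in X_0$ we have $G_x = [G]_x = \{e\}$, so $X_0$ is contained in the set of points with trivial stabilizer. Since $\mu(X_0) = 1$ by Theorem \ref{thm-holonomymeasure0}, the set of points with trivial stabilizer has full measure, which is exactly the statement that $(X,G,\Phi,\mu)$ is essentially free. $\eop$

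\textbf{Remark on the main obstacle.} There is essentially no obstacle here: all the real work has been done in Theorem \ref{thm-holonomymeasure0}, and the only additional input is the elementary observation that topological freeness forces $[G]_x$ to be trivial everywhere, which is already implicit in the paragraph preceding the statement (``for a topologically free action, a point $x \in X$ has non-trivial stabilizer if and only if it has non-trivial holonomy''). The corollary is thus a direct packaging of that equivalence with the full-measure conclusion of Theorem \ref{thm-holonomymeasure0}.
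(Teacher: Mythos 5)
Your proposal is correct and follows exactly the paper's route: invoke Theorem \ref{thm-holonomymeasure0} to get that $X_0$ has full measure, and observe that topological freeness (via the empty interior of the set of points with non-trivial stabilizer) forces $[G]_x=\{e\}$, so that trivial holonomy and trivial stabilizer coincide. The brief detour through minimality and translates of $V_g$ is unnecessary, as you yourself note, but the argument as stated is the one the paper gives.
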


Kambites, Silva and Steinberg \cite{KSS2006} showed that an action on the boundary of a rooted $d$-ary tree of a group, generated by finite automata, is topologically free if and only if it is essentially free. We show  in Section \ref{subsec-unnonconstant} that actions generated by finite automata are locally non-degenerate, and recover the result of \cite{KSS2006} as a consequence of Theorem \ref{thm-holonomymeasure0}.

\bigskip
To finish, we outline some applications of our results, described in more detail in Section \ref{sec-applications}.

Let ${\rm Sub}(G)$ be the set of all closed subgroups of a finitely generated discrete group $G$ with the Chaubaty-Fell topology, see Section \ref{subsec-irs}. Then ${\rm Sub}(G)$ is compact and totally disconnected.

 For an action $(X,G,\Phi, \mu)$ consider the mapping
  \begin{align}\label{eq-stmapi} {\rm St}: X \to {\rm Sub}(G): x \mapsto G_x, \end{align}
which assigns to each $x \in X$ its stabilizer.  The group $G$ acts on ${\rm Sub}(G)$ by conjugation. Stabilizers of points in an orbit of $x \in X$ are conjugate, so the mapping ${\rm St}$ maps the orbit of $x \in X$ onto the orbit of $G_x$ in ${\rm Sub}(G)$ under  conjugation. The pushforward $\nu = {\rm St}_*\mu$ of the invariant ergodic measure $\mu$ is an ergodic invariant measure on ${\rm Sub}(G)$ \cite{AM1966}. This measure is said to be an    \emph{invariant random subgroup}, or simply IRS, following the terminology of  \cite{AGV2014,Bowen2014}. The mapping \eqref{eq-stmapi} need not be injective, and may have points of discontinuity. 

We discuss  in more detail in Section \ref{subsec-irs} some results about IRS for groups acting on the boundary of trees   available in the literature. We use now Theorems \ref{thm-stabilizers} and \ref{thm-holonomymeasure0} to give a partial answer to two open questions posed by Grigorchuk in \cite{Grig2011}. 

Namely, for the set $X_0$ of points without holonomy, defined by \eqref{eq-ptX0}, and called the set of $G$-typical points in \cite{Grig2011}, consider the topological closure
\begin{align}\label{eq-ZclsX_0}
	Z = \overline{\{G_x \mid x \in X_0\}}\subset{\rm Sub}(G).
\end{align}
On \cite[p.123]{Grig2011}, Grigorchuk asked when the IRS $\nu$ is supported on the set $Z$. Also,    \cite[Problem 8.2]{Grig2011} asks under what condition on the action the system $(X,G,\Phi,\mu)$ is measure-theoretically isomorphic to the system $(Z, G, \nu)$? We give partial answers to these questions below.

\begin{thm}\label{thm-application}
Let $X$ be a Cantor set, let $G$ be a finitely generated group, and let $(X,G,\Phi,\mu)$ be a minimal equicontinuous action. Suppose $(X,G,\Phi,\mu)$ is locally non-degenerate. Then the following holds.
\begin{enumerate}
\item The IRS $\nu = {\rm St}_*\mu$  is supported on $Z$. 
\item The action $(X,G,\Phi,\mu)$ is not LQA if and only if $\nu$ is non-atomic. 
\item If $(X,G,\Phi,\mu)$ is not LQA and the restriction ${\rm St}: X_0 \to {\rm Sub}(G)$ is injective, then ${\rm St}$ provides a measure-theoretical isomorphism between $(X, G,\Phi,\mu)$ and $(Z,G,\nu)$.
\end{enumerate}
\end{thm}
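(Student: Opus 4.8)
The plan is to derive Theorem~\ref{thm-application} as a fairly direct consequence of Theorems~\ref{thm-stabilizers} and \ref{thm-holonomymeasure0}, together with the general properties of the stabilizer map ${\rm St}$ and the pushforward IRS $\nu = {\rm St}_*\mu$. Throughout, let $X_0$ be the set of points with trivial holonomy and recall that, by Theorem~\ref{thm-residualwoholonomy}, $X_0$ is residual, and by Theorem~\ref{thm-holonomymeasure0}, $\mu(X_0) = 1$ since the action is locally non-degenerate.

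For part (1), I would argue that $\nu$ is supported on $Z = \overline{\{G_x \mid x \in X_0\}}$ simply because $X_0$ has full measure: since ${\rm St}(X_0) \subset Z$ and $Z$ is closed in ${\rm Sub}(G)$, we have $\nu(Z) = \mu({\rm St}^{-1}(Z)) \geq \mu(X_0) = 1$, hence ${\rm supp}(\nu) \subset Z$. (One small point to check: ${\rm St}$ is Borel measurable, so that ${\rm St}^{-1}(Z)$ is measurable and the pushforward makes sense — this is standard, but I would note it, e.g. via the fact that ${\rm St}$ is continuous at every point of $X_0$, or directly from the structure of the Chabauty--Fell topology.)

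For part (2), I would split into the two implications. If $(X,G,\Phi,\mu)$ is LQA, then by Theorem~\ref{thm-stabilizers} the set $\{G_x \mid x \in X_0\}$ is a \emph{finite} set of conjugate subgroups; by Theorem~\ref{thm-mainmain}(2) (which in turn relies on local non-degeneracy), there is a single conjugacy class of subgroup $H$ whose conjugates are the stabilizers of a full-measure set of points, so $\nu$ is a finitely-supported measure on the finite orbit of $H$ under conjugation — in particular $\nu$ has atoms, so $\nu$ is atomic, not non-atomic. Conversely, if the action is not LQA, I want to show $\nu$ is non-atomic. Suppose for contradiction that $\nu$ has an atom at some $H_0 \in {\rm Sub}(G)$. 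Then $A = {\rm St}^{-1}(H_0) = \{x \in X \mid G_x = H_0\}$ has positive measure. Because $\mu$ is $G$-invariant and ergodic, and $G$ permutes the fibers $\{x : G_x = gH_0g^{-1}\}$ among themselves, the union $\bigcup_{g} {\rm St}^{-1}(gH_0g^{-1})$ is a $G$-invariant set of positive measure, hence of full measure; so $\mu$-a.e.\ point has stabilizer conjugate to $H_0$, and since $\mu(X_0)=1$, we may further assume $x \in X_0$ for a full-measure set with stabilizer conjugate to $H_0$. This forces $\{G_x : x \in X_0\}$ to be contained (up to the residual/null discrepancy) in a single conjugacy orbit — and here I would invoke Theorem~\ref{thm-stabilizers} to conclude that the full-measure concentration on finitely many conjugates of $H_0$ propagates, via local non-degeneracy and the minimality/equicontinuity structure, to the statement that $\{G_x : x \in X_0\}$ is itself a finite set of conjugate subgroups, contradicting non-LQA. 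The subtle point — and what I expect to be the \textbf{main obstacle} — is bridging the gap between ``full $\mu$-measure'' and ``all of $X_0$'': Theorem~\ref{thm-stabilizers} is a purely topological statement about $X_0$, while the atom hypothesis gives only measure-theoretic information; I would need a density/continuity argument (using that ${\rm St}$ is continuous on $X_0$ and that a full-measure subset of a Cantor set with an invariant measure of full support is dense) to upgrade the measure statement to the relevant topological conclusion, or else reformulate the contradiction entirely in terms of the clopen structure used in the proof of Theorem~\ref{thm-stabilizers}.

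For part (3), assume $(X,G,\Phi,\mu)$ is not LQA and ${\rm St}|_{X_0}$ is injective. By part (1), $\nu$ is supported on $Z$, and ${\rm St}: X \to Z$ is surjective onto $Z$ (as $Z$ is the closure of ${\rm St}(X_0)$ and, more carefully, ${\rm St}(X)$ is already closed or at least ${\rm St}(X_0)$ is dense in ${\rm supp}(\nu)$). The map ${\rm St}$ is $G$-equivariant (sending $g\cdot x$ to $gG_xg^{-1}$), Borel, and restricts to a bijection $X_0 \to {\rm St}(X_0)$, with $\mu(X_0) = 1$ and $\nu({\rm St}(X_0)) = 1$. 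To get a measure-theoretic \emph{isomorphism} I would check that ${\rm St}|_{X_0}$ is a Borel isomorphism onto its image (a Borel injection between standard Borel spaces has Borel image and Borel inverse, by the Lusin--Souslin theorem), that it is $G$-equivariant there, and that it pushes $\mu$ forward to $\nu$ by definition of $\nu = {\rm St}_*\mu$; equivariance then gives that it intertwines the $G$-actions $\mu$-a.e. A minor thing to handle: one should confirm that the $G$-action on $(Z,\nu)$ is exactly the conjugation action restricted to the support, which is automatic from the construction. The only real content beyond bookkeeping is again ensuring the image has full $\nu$-measure and that injectivity on the residual/conull set $X_0$ suffices — which it does, since measure-theoretic isomorphism only requires agreement up to null sets.
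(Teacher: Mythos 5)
Your parts (1) and (3), and the ``LQA $\Rightarrow$ atomic'' half of (2), follow the paper's own argument: the paper also deduces (1) from $\mu(X_0)=1$ (which it gets from Theorem \ref{thm-holonomymeasure0}, exactly as you do), gets atomicity in the LQA case from the finiteness of $Z$ via Theorem \ref{thm-stabilizers}, and disposes of (3) in one line by observing that $\nu$ is a push-forward measure and ${\rm St}|_{X_0}$ is an injection on a conull invariant set; your Lusin--Souslin bookkeeping for (3) is more careful than the paper's but is the same argument.

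The one place you genuinely diverge is the implication ``not LQA $\Rightarrow$ $\nu$ non-atomic,'' and there your argument is left incomplete where it need not be. The paper argues directly: by Theorem \ref{thm-stabilizers} each orbit $\{gG_xg^{-1}\mid g\in G\}$ with $x\in X_0$ is infinite, by Lemma \ref{lemma-gw} the set $Z$ is minimal, hence $Z$ is perfect (a Cantor set), and a finite conjugation-invariant measure on a minimal system with infinite orbits cannot charge a point. Your contradiction route also works, but the step you flag as the ``main obstacle'' --- upgrading the full-measure statement to a statement about all of $X_0$ --- is a red herring: no such upgrade is needed. If $\nu(\{H_0\})=c>0$, conjugation-invariance of $\nu$ gives $\nu(\{gH_0g^{-1}\})=c$ for every $g$, and since $\nu$ is a probability measure the conjugacy orbit $\{gH_0g^{-1}\mid g\in G\}$ is finite (of cardinality at most $1/c$); this is the step you should make explicit, as it is where ``finitely many conjugates'' actually comes from. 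Then ${\rm St}^{-1}\bigl(\{gH_0g^{-1}\mid g\in G\}\bigr)$ has positive $\mu$-measure and $\mu(X_0)=1$, so these two sets intersect, yielding a \emph{single} point $x\in X_0$ with $G_x$ conjugate to $H_0$ and hence with finite conjugation orbit $\{gG_xg^{-1}\mid g\in G\}$. Since Theorem \ref{thm-stabilizers} (in the form of Theorem \ref{thm-stabilizers-proof}) is a per-point statement --- for \emph{any} $x\in X_0$, non-LQA forces that orbit to be infinite --- this single point already gives the contradiction. No density or continuity argument, and no ergodicity, is required. As written, however, your proof of this implication stops at an acknowledged gap, so it is not yet complete.
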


Theorem \ref{thm-application} describes the properties of the IRS $\nu$ induced on ${\rm Sub}(G)$ via the mapping \eqref{eq-stmapi} in the case when the set of points with trivial holonomy for the action $(X,G,\Phi,\mu)$ has full measure. If an action $(X,G,\Phi,\mu)$ is locally degenerate, then the set of points with non-trivial holonomy in $X$ may have positive measure, and the following problem is natural.

\begin{prob}\label{prob-4}
Let $(X,G,\Phi,\mu)$ be a minimal equicontinuous action on a Cantor set $X$, and suppose the set of points with non-trivial holonomy in $X$ has positive measure. Describe the properties of the IRS $\nu = {\rm St}_*\mu$ induced on ${\rm Sub}(G)$ via the map \eqref{eq-stmapi}.
\end{prob}

Related to the construction of the IRS above is the following construction. Define
\begin{align}\label{eq-wtildeX}
	\widetilde X=\overline{\{(x,G_x) \mid x\in X_0\}}\subset X\times Z
\end{align}
and denote by $\widetilde\Phi$ the product action of $G$ by $\Phi$ in the first coordinate and by conjugation in the second coordinate. Then the projection $\eta:\widetilde X\to X$ to the 
first coordinate satisfies $g \cdot \eta(x,G_x) = \eta(g \cdot x, g G_x g^{-1})$. By \cite[Proposition 1.2]{GW2014}, we know that $(\widetilde X,G,\widetilde\Phi)$ is minimal 
and the set  $\{ \widetilde{x}\in \widetilde{X} \mid  |\eta^{-1}(\eta(\widetilde{x}))|=1\}$ of singleton fibers is dense in $\widetilde{X}$. Thus the system $(\widetilde X,G,\widetilde \Phi)$ is an \emph{almost one-to-one} extension of $(X,G,\Phi)$. As a consequence of Theorem \ref{thm-stabilizers}, we show that for LQA actions such an extension is trivial in the following sense.

\begin{thm}\label{thm-alternative-LAQ}
Let $(X,G,\Phi)$ be a minimal equicontinuous action. 
Then $(X,G,\Phi)$ is locally quasi-analytic if and only if 
 the almost one-to-one extension $\eta:\widetilde X\to X$ is a conjugacy.
\end{thm}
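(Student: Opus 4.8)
The plan is to prove both implications by carefully analyzing when the fiber $\eta^{-1}(x)$ over a point $x \in X$ is a singleton, and relating this to the holonomy structure. First I would observe the general fact, following from the definition \eqref{eq-wtildeX}, that $\eta^{-1}(x) = \{x\} \times \overline{\{G_{x'} : x' \in X_0\}} \cap (\{x\} \times Z)$ is naturally identified with the set of subgroups $H \in Z$ that arise as limits $H = \lim_k G_{x_k}$ along sequences $x_k \in X_0$ with $x_k \to x$. In particular, $\eta$ is a conjugacy if and only if for every $x \in X$ this limit set is a single subgroup, i.e. the restriction ${\rm St}|_{X_0}$ extends continuously to all of $X$ in the Chabauty--Fell topology.

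For the direction ``LQA $\Rightarrow$ conjugacy'': assume $(X,G,\Phi)$ is locally quasi-analytic. By Theorem \ref{thm-stabilizers}, the set $\{G_x : x \in X_0\}$ is a finite set of conjugate subgroups $\{H_1, \dots, H_m\}$. I would then argue that along any sequence $x_k \in X_0$ converging to $x \in X$, the stabilizers $G_{x_k}$ are eventually constant. The key technical point is to use the LQA constant $\e > 0$ from Definition \ref{def-LQA} together with equicontinuity: for $g$ in a fixed finite generating set, if $g \cdot x_k = x_k$ for points $x_k$ that are all within an $\e$-ball and converging, then $g$ must fix a whole clopen neighborhood, or, conversely, the clopen sets on which generators act in a prescribed way stabilize. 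More precisely, since there are only finitely many possible values $H_i$, and since membership $g \in G_{x_k}$ for a generator $g$ is governed by which side of a clopen partition $x_k$ lies on, the sequence $G_{x_k}$ takes a constant value $H_i$ on a subsequence; LQA and minimality force this value to be independent of the subsequence, so $\eta^{-1}(x)$ is a singleton. Combined with minimality of $(\widetilde X, G, \widetilde\Phi)$ (from \cite{GW2014}) and compactness, this shows $\eta$ is a bijection, hence (being a continuous bijection of compact Hausdorff spaces) a homeomorphism, and it is equivariant by construction.

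For the contrapositive direction ``not LQA $\Rightarrow$ not a conjugacy'': assume $(X,G,\Phi)$ is not LQA. Then by Theorem \ref{thm-stabilizers} the set $\{G_x : x \in X_0\}$ is infinite. Since $X_0$ is dense (Theorem \ref{thm-residualwoholonomy}) and $X$ is compact, I would extract a sequence $x_k \in X_0$ with $x_k \to x$ for some $x \in X$ such that the subgroups $G_{x_k}$ are pairwise distinct; passing to a further subsequence using compactness of ${\rm Sub}(G)$, they converge to some $H \in Z$. The goal is to produce a second distinct point of $\eta^{-1}(x)$. Here I expect to use that infinitely many distinct stabilizers among points without holonomy, together with LQA failing at every scale, lets one choose two subsequences $x_k \to x$ and $x_k' \to x$ with $\lim G_{x_k} = H \ne H' = \lim G_{x_k'}$ — intuitively, the failure of unique extension means points clustering at $x$ can ``see'' genuinely different local behavior of group elements, reflected in different stabilizers. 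Then $(x, H)$ and $(x, H')$ are two distinct points of $\widetilde X$ with the same $\eta$-image, so $\eta$ is not injective and hence not a conjugacy.

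The main obstacle I anticipate is the non-LQA direction: producing two sequences in $X_0$ converging to the same base point but with distinct limiting stabilizers. Merely knowing $\{G_x : x \in X_0\}$ is infinite does not immediately give clustering with distinct Chabauty limits at a single point; one needs to exploit the precise failure of \eqref{eq-lqaa} — that for arbitrarily small diameter there exist $U \supset V$ and $g_1, g_2$ agreeing on $V$ but not on $U$ — and translate this, via the correspondence between clopen partitions and the tree representation, into the statement that the map ${\rm St}|_{X_0}$ genuinely fails to extend continuously at some point. I would likely phrase this using the orbit structure and the fact that $G$ is finitely generated: a generator $g$ whose fixed-point set has boundary points which are not interior to the fixed set gives, at such a boundary point, nearby points in $X_0$ both fixed and not fixed by $g$, yielding distinct stabilizers in the limit.
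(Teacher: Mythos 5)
Your forward direction is essentially sound and in fact takes a more elementary route than the paper: the paper deduces equicontinuity of the product action on $X\times Z$ (using finiteness of $Z$ from Theorem \ref{thm-stabilizers}), identifies $\widetilde X$ with $X\times Z$ via minimality, and then invokes Proposition \ref{prop-almost1-1MEF} on maximal equicontinuous factors; you instead argue injectivity of $\eta$ directly. Your sketch can be made precise by quoting what the proof of Theorem \ref{thm-stabilizers} actually establishes: if $U_\ell$ is an adapted clopen set of diameter less than the LQA constant, then \emph{every} point of $X_0\cap gU_\ell$ has stabilizer exactly $g\ker(\Phi_\ell)g^{-1}$. Hence any sequence in $X_0$ converging to $x$ eventually enters the partition element containing $x$ and has eventually constant stabilizer, so $\eta^{-1}(x)$ is a singleton; a continuous equivariant bijection of compact Hausdorff spaces is a conjugacy. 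This is fine.

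The reverse direction, however, has a genuine gap, and your proposed mechanism for closing it does not work. You want a point $x$ at which two sequences in $X_0$ have distinct Chabauty limits, witnessed by a boundary point of ${\rm Fix}(g)$ for some $g$. The problem is that if $y\in X_0$ and $g\cdot y=y$, then by definition of $X_0$ the element $g$ fixes a whole neighborhood of $y$, so $y$ lies in the \emph{interior} of ${\rm Fix}(g)$. Thus to get a limit $H'$ containing $g$ you need points of ${\rm int}({\rm Fix}(g))$ accumulating at $x$, while to get a limit $H$ excluding $g$ you need points of $X\setminus{\rm Fix}(g)$ accumulating at $x$; this requires $\overline{{\rm int}({\rm Fix}(g))}\cap\overline{X\setminus{\rm Fix}(g)}\neq\emptyset$, which fails whenever ${\rm int}({\rm Fix}(g))$ is clopen --- and nothing in the failure of \eqref{eq-lqaa} prevents every $g\in G$ from having clopen ${\rm int}({\rm Fix}(g))$. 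More fundamentally, two distinct limit subgroups over a single point need not be separated by any single group element's fixed set in this way. The paper avoids this entirely by arguing the contrapositive through a structural fact you are missing: if $\eta$ is a conjugacy, then $\widetilde X$ is equicontinuous, hence its factor $Z$ (projection to the second coordinate) carries a minimal equicontinuous action of $G$ by conjugation; Lemma \ref{lemma-expgraphs} then shows that a minimal equicontinuous subset of ${\rm Sch}(G,S)\cong{\rm Sub}(G)$ must be \emph{finite} (equicontinuity in the Schreier-graph metric forces isomorphisms of arbitrarily large pointed balls, hence equality of graphs), and Theorem \ref{thm-stabilizers} converts finiteness of $Z$ into the LQA property. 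Some input of this kind --- a rigidity statement for equicontinuous subsets of ${\rm Sub}(G)$ --- appears unavoidable, and your outline contains no substitute for it.
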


\medskip

The rest of the paper is organized as follows. In Section \ref{subsec-equicts} we recall the necessary background on the properties of equicontinuous actions.  In Section \ref{subsec-tree} we discuss representations of minimal equicontinuous actions onto the boundaries of rooted spherically homogeneous trees. We discuss locally non-degenerate actions in Section \ref{subsec-unnonconstant}, giving examples which illustrate the definition. We also state sufficient conditions for an action on a rooted tree to be locally non-degenerate in terms of the geometry of the tree. Theorem \ref{thm-holonomymeasure0} which states that for a locally non-degenerate action the set of points with trivial holonomy has full measure and its corollaries are proved in Section \ref{sec-nontrivhol}. In Section \ref{subsec-counterex} we give an example of an action where the set of points with trivial holonomy has zero measure. Theorem \ref{thm-stabilizers} and the main Theorem \ref{thm-mainmain} are proved in Section \ref{sec-stabilizers}. In Section \ref{sec-applications} we discuss applications of our results proving Theorems \ref{thm-application} and \ref{thm-alternative-LAQ}.

{\bf Acknowledgements:} The authors thank Steve Hurder and Gabriel Fuhrmann for useful discussions. 

%\vfill
%\eject

\section{Equicontinuous actions}\label{subsec-equicts}

 We recall basic properties of equicontinuous group actions on Cantor sets. Sections \ref{subsec-groupchainsconstr}-\ref{subsec-stabilizerschains} discuss topological properties of such actions, so we omit the measure from the notation in these sections.

An action $\Phi:G \to Homeo(X)$ of a finitely generated group $G$ on a Cantor set $X$ is \emph{equicontinuous} if $X$ admits a metric $d$ such that for all $\epsilon>0$ there is $\delta >0$ such that for any $g \in G$ and any $x,y \in X$ with $d(x,y) < \delta$ we have $d(g \cdot x, g \cdot y) < \epsilon$. An action $(X,G,\Phi)$ is \emph{minimal} if the \emph{orbit} $G(x)=\{g\cdot x \mid g \in G\}$ of every $x \in X$ is dense in $X$. In this paper, $(X,G,\Phi)$ is always minimal and equicontinuous.

\subsection{Group chains}\label{subsec-groupchainsconstr} Our main tool in working with group actions are \emph{group chains}. In this section we explain how a group chain defines a minimal equicontinuous action on a Cantor set. Actions on Cantor sets in terms of group chains were studied in \cite{DHL2016,FO2002,CP2008,CortezMedynets2016} and other works.

\begin{defn}\label{defn-groupchain}
Let $G$ be a finitely generated discrete group. A \emph{group chain} is a descending infinite sequence $\{G_\ell\}_{\ell \geq 0}: G=G_0 \supset G_1 \supset \cdots$ of finite index subgroups of $G$.
\end{defn}

For each $\ell \geq 0$ the coset space $X_\ell = G/G_\ell$ is finite, and there are inclusion maps 
  $$X_{\ell+1} \to X_\ell: gG_{\ell+1} \mapsto gG_\ell,$$
induced by the inclusion of groups $G_{\ell+1} \subset G_\ell$. The inverse limit space
  \begin{align}\label{eq-xinfty}X_\infty=\lim_{\longleftarrow}\{X_{\ell+1} \to X_\ell\} = \{ (g_0 G_0, g_1 G_1,\ldots) \mid g_{\ell+1}G_{\ell+1} \subset g_\ell G_\ell\} \subset \prod_{\ell \geq 0} X_\ell\end{align}
is a Cantor set in the relative topology, induced from the product (Tychonoff) topology on $\prod_{\ell \geq 0} X_\ell$. We denote by $(g_\ell G_\ell)_{\ell \geq 0}$ sequences of cosets in $X_\infty$. Basic sets in $X_\infty$ are given by 
  \begin{align}\label{eq-basicset}U_{g,m} = \{(h_\ell G_\ell)_{\ell \geq 0} \in X_\infty \mid h_m G_m = g G_m\}. \end{align}
The group $G$ acts on the left on every coset space $X_{\ell}$, permuting the cosets in $X_\ell$, and there is an induced action by homeomorphisms on the inverse limit
  \begin{align}\label{eq-gactionxinfty}G \times X_\infty \to X_\infty: (g, (g_0 G_0, g_1G_1,g_2G_2,\ldots)) \mapsto (gg_0 G_0, g g_1G_1, gg_2 G_2, \ldots).\end{align}
Since the action of $G$ on every coset space $X_\ell$, $\ell \geq 0$, is transitive, the action \eqref{eq-gactionxinfty} on $X_\infty$ is minimal. A standard way to define an ultrametric on the space $X_\infty$ is by
 \begin{align}\label{eq-metricd} D((g_\ell G_\ell)_{\ell \geq 0}, (h_\ell G_\ell)_{\ell \geq 0}) = \frac{1}{2^m}, \textrm{ where }m = \min\{\ell \geq 0 \mid g_\ell G_\ell \ne h_\ell G_\ell\}. \end{align}
This metric measures for how long sequences of cosets $(g_\ell G_\ell)_{\ell \geq 0}$ and $(h_\ell G_\ell)_{\ell \geq 0}$ coincide. Since $G$ acts on each coset space $X_\ell$, $\ell \geq 0$, by permutations, it acts on $X_\infty$ by isometries relative to the metric $D$, and so its action is equicontinuous.

  \subsection{Adapted clopen sets and group chain models}\label{subsec-chainmodels} In this section, we show how to associate a group chain to a minimal equicontinuous action $(X,G,\Phi)$, where $X$ is a Cantor set, and $G$ is a finitely generated discrete group.

Let $CO(X)$ denote the collection  of all clopen (closed and open) subsets of $X$, which form a basis for the topology of $X$. 
For $\phi \in Homeo(X)$ and    $U \in CO(X)$, the image $\phi(U) \in CO(X)$.  
The following   result is folklore, and a proof is given in \cite[Proposition~3.1]{HL2018}.
 \begin{prop}\label{prop-CO}
A minimal Cantor action $(X,G,\Phi)$ is  equicontinuous  if and only if, for the induced action $\Phi_* \colon G \times CO(X) \to CO(X)$, the orbit of every $U \in CO(X)$ is finite.
\end{prop}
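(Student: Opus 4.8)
The plan is to prove the two implications separately; both are elementary, and together they amount to the statement that an equicontinuous Cantor action is, after rechoosing the metric, an isometric action on the inverse limit of a nested sequence of finite $G$-invariant clopen partitions.

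For necessity, I would fix a compatible metric $d$ witnessing equicontinuity and a clopen set $U\in CO(X)$. Since $U$ and $X\setminus U$ are disjoint compact sets, $\e_0:=d(U,X\setminus U)>0$, and equicontinuity applied with $\e=\e_0$ yields $\delta>0$ such that $d(a,b)<\delta$ implies $d(h\cdot a,h\cdot b)<\e_0$ for every $h\in G$. The first step is to check that each translate $g\cdot U$ is ``$\delta$-saturated'': if $x\in g\cdot U$ and $d(x,y)<\delta$, then applying the implication to $h=g^{-1}$ shows $g^{-1}\cdot y$ lies within $\e_0$ of $g^{-1}\cdot x\in U$, hence $g^{-1}\cdot y\in U$, i.e. $y\in g\cdot U$. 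Running the same argument for $X\setminus U$ gives the uniform gap $d\big(g\cdot U,\,X\setminus(g\cdot U)\big)\ge\delta$ for all $g\in G$. The second step is a compactness count: take a finite $(\delta/3)$-net $\{x_1,\dots,x_N\}$ in $X$; each open ball $B(x_i,\delta/3)$ has diameter $<\delta$, so it lies wholly inside $g\cdot U$ or wholly inside its complement, whence $g\cdot U=\bigcup\{B(x_i,\delta/3):x_i\in g\cdot U\}$ is completely determined by the subset $\{i:x_i\in g\cdot U\}$ of $\{1,\dots,N\}$. Thus $U$ has at most $2^N$ distinct translates, so its orbit is finite.

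For sufficiency, assume every clopen set has finite $G$-orbit. Since $X$ is a Cantor set, I would choose for each $n\ge1$ a finite clopen partition $\cP_n$ of $X$ with mesh $<1/n$. By hypothesis the family $G\cdot\cP_n=\{g\cdot P:g\in G,\ P\in\cP_n\}$ is finite, and it is $G$-invariant, so the partition ${\mathcal R}_n$ of $X$ into the atoms of the Boolean algebra generated by $G\cdot\cP_1\cup\dots\cup G\cdot\cP_n$ is finite and clopen; moreover ${\mathcal R}_{n+1}$ refines ${\mathcal R}_n$, the mesh of ${\mathcal R}_n$ is $<1/n$, and each $g\in G$ permutes the atoms of ${\mathcal R}_n$. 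Defining $\rho(x,y)=2^{-n}$ for the largest $n\ge0$ with $x$ and $y$ in a common atom of ${\mathcal R}_n$ (with ${\mathcal R}_0=\{X\}$ and $\rho(x,x)=0$) gives an ultrametric inducing the topology of $X$, because the atoms of the ${\mathcal R}_n$ form a clopen neighborhood basis. Since each $g\in G$ maps atoms of ${\mathcal R}_n$ onto atoms of ${\mathcal R}_n$, we get $\rho(g\cdot x,g\cdot y)=\rho(x,y)$, so $G$ acts on $(X,\rho)$ by isometries and the action is equicontinuous.

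I do not anticipate a genuine obstacle. The points requiring care are: in the necessity direction, applying equicontinuity to $g^{-1}$ rather than $g$ and first extracting the \emph{uniform} separation $d(g\cdot U,X\setminus(g\cdot U))\ge\delta$ before counting; and in the sufficiency direction, arranging the ${\mathcal R}_n$ to be simultaneously finite, nested, shrinking, and $G$-invariant, which is exactly where the finite-orbit hypothesis is used, to keep the families $G\cdot\cP_n$ finite. Minimality of the action is not needed.
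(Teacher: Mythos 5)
Your proof is correct; the paper does not actually prove Proposition \ref{prop-CO} but cites it as folklore from \cite{HL2018}, and your argument is exactly the standard one in both directions (uniform separation $d(g\cdot U, X\setminus (g\cdot U))\ge\delta$ followed by a finite-net count for necessity; a $G$-invariant refining sequence of finite clopen partitions and the induced invariant ultrametric for sufficiency). The only cosmetic point is the trivial case $U\in\{\emptyset,X\}$, where $d(U,X\setminus U)$ is not defined but the orbit is a singleton anyway, and you are right that minimality is not needed.
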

 
\begin{defn}\label{defn-adapted}
Let $(X,G,\Phi)$ be a minimal equicontinuous action. A clopen set $U \in CO(X)$ is \emph{adapted} to the action $\Phi$ if $U$ is non-empty  and for any $g \in G$, 
if $\Phi(g)(U) \cap U \ne \emptyset$ then $\Phi(g)(U) = U$.
\end{defn}

That is, the translates of an adapted set by the action of $G$ form a partition of the Cantor set $X$.

\begin{defn}\label{defn-adaptednbhd}
Let $(X,G,\Phi)$ be a minimal equicontinuous action and $x \in X$. 
A properly descending chain of clopen sets $\cU_x = \{U_{\ell} \subset X  \mid \ell \geq 0\}$ is said to be an \emph{adapted neighborhood basis} at $x$  if
    $x \in U_{\ell +1} \subset U_{\ell}$ for all $ \ell \geq 0$ with     $\cap \,  U_{\ell} = \{x\}$, and  each $U_{\ell}$ is adapted to $\Phi$.
\end{defn}

The following result is folklore, a proof can be found in \cite[Appendix A]{DHL2016}.

\begin{prop}\label{prop-adpatedchain}
Let  $(X,G,\Phi)$   be a minimal equicontinuous    action. Given $x \in X$, there exists an adapted neighborhood basis $\cU_x$ at $x$ for the action $\Phi$.
 \end{prop}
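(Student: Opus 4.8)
The plan is to build the adapted basis from a given clopen neighborhood basis at $x$ by replacing each neighborhood by an atom of the finite Boolean algebra generated by finitely many $G$-orbits of clopen sets; finiteness of these orbits is exactly Proposition~\ref{prop-CO}.

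First I would fix, using that $X$ is a Cantor set, a properly descending sequence $\{W_\ell\}_{\ell \geq 0}$ of clopen sets with $x \in W_{\ell+1} \subset W_\ell$ and $\bigcap_{\ell} W_\ell = \{x\}$. By Proposition~\ref{prop-CO}, for every $\ell$ the $\Phi_*$-orbit $\mathcal{O}_\ell = \{\Phi(g)(W_\ell) \mid g \in G\} \subset CO(X)$ is finite. Note that $\bigcup \mathcal{O}_\ell$ is a non-empty, clopen, $G$-invariant subset of $X$, hence equals $X$ by minimality. For each $\ell$, let $\mathcal{B}_\ell$ be the (finite) Boolean subalgebra of $CO(X)$ generated by $\mathcal{O}_0 \cup \mathcal{O}_1 \cup \cdots \cup \mathcal{O}_\ell$; since this generating family covers $X$, the atoms of $\mathcal{B}_\ell$ form a finite partition $\mathcal{P}_\ell$ of $X$ into clopen sets. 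Because $G$ permutes each $\mathcal{O}_k$, it maps $\mathcal{B}_\ell$ to itself, and therefore permutes the set of atoms $\mathcal{P}_\ell$.

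Next I would let $U_\ell \in \mathcal{P}_\ell$ be the unique atom with $x \in U_\ell$ and check the required properties. Each $U_\ell$ is clopen and contains $x$. Since $W_\ell \in \mathcal{B}_\ell$ is a union of atoms and $x \in W_\ell$, we get $U_\ell \subseteq W_\ell$; in particular $\bigcap_\ell U_\ell = \{x\}$. Since $\mathcal{B}_{\ell+1} \supseteq \mathcal{B}_\ell$, the partition $\mathcal{P}_{\ell+1}$ refines $\mathcal{P}_\ell$, so $U_{\ell+1} \subseteq U_\ell$. Finally, for any $g \in G$ the set $\Phi(g)(U_\ell)$ is again an atom in $\mathcal{P}_\ell$; hence if $\Phi(g)(U_\ell) \cap U_\ell \neq \emptyset$ then, two atoms being disjoint or equal, $\Phi(g)(U_\ell) = U_\ell$. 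Thus each $U_\ell$ is adapted to $\Phi$. To arrange that the chain is properly descending, observe that $\{U_\ell\}$ cannot be eventually constant: otherwise $\bigcap_\ell U_\ell$ would equal some clopen $U_N$, forcing $U_N = \{x\}$, which is impossible since singletons are not open in a Cantor set. Passing to the subsequence of indices where the inclusions are strict and relabelling yields the desired adapted neighborhood basis $\mathcal{U}_x$ at $x$.

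The only genuinely delicate point is the observation that $\bigcup \mathcal{O}_\ell = X$, which is what makes the atoms of $\mathcal{B}_\ell$ partition all of $X$ and lets $G$ act on them by permutations; everything else is bookkeeping with finite Boolean algebras. $\eop$
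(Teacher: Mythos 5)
Your proof is correct and complete. The paper itself does not prove Proposition \ref{prop-adpatedchain}; it labels the statement as folklore and defers to \cite[Appendix A]{DHL2016}, so there is no in-text argument to compare against line by line. Your construction --- taking the atom containing $x$ of the finite Boolean subalgebra of $CO(X)$ generated by the (finite, by Proposition \ref{prop-CO}) $G$-orbits of a clopen neighborhood basis --- is the standard direct argument, and every step checks out: $\Phi(g)$ is a Boolean-algebra automorphism of $CO(X)$ preserving each generating orbit, hence permutes the atoms of $\mathcal{B}_\ell$, which gives adaptedness; refinement of the algebras gives nesting; $U_\ell \subseteq W_\ell$ gives $\bigcap_\ell U_\ell = \{x\}$; and the subsequence trick handles proper descent since singletons are not open in a Cantor set. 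Two small remarks. First, the point you flag as "genuinely delicate" ($\bigcup \mathcal{O}_\ell = X$) is actually not needed: a Boolean subalgebra of $CO(X)$ contains the top element $X$ by definition, so its atoms partition $X$ whether or not the generators cover $X$; your minimality argument is correct but dispensable. Second, an alternative route, implicit in Section \ref{subsec-measure} of the paper and common in the literature, realizes $X$ as $E/E_x$ for the profinite Ellis group $E = \overline{\Phi(G)}$ and takes $U_\ell$ to be orbits of a neighborhood basis of clopen subgroups of $E$ containing $E_x$; that approach packages the same finiteness into the profinite structure, while yours is more elementary and self-contained.
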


For an adapted set $U \subset X$, consider the collection of elements in $G$ which preserve $U$, that is,
 \begin{equation}\label{eq-adapted}
G_U = \left\{g \in G \mid \Phi(g)(U) = U  \right\}. 
\end{equation}
Then $G_U$ is a subgroup of   $G$, called the \emph{stabilizer} of $U$, or the \emph{isotropy subgroup} of the action of $G$ at $U$.  Since the orbit of $U$ under the action of $G$ is finite, $G_U$ has finite index in $G$.

Fix $x \in X$, and let $\cU_x = \{U_\ell\}_{\ell  \geq 0}$ be an adapted neighborhood basis at $x \in X$. Denote by $G_\ell = G_{U_\ell}$ the stabilizer of $U_\ell$. Since $U_{\ell+1} \subset U_\ell$, every element in $G$ which stabilizers $U_{\ell+1}$ also stabilizers $U_\ell$, and so there are inclusions $G_{\ell+1} \subset G_\ell$. Thus associated to an adapted neighborhood basis $\cU_x$ at $x \in X$, there is an infinite group chain $\cG^x = \{G_\ell\}_{\ell \geq 0}$ of isotropy subgroups of $G$ at the sets of $\cU_x$.

For each $\ell \geq 0$, the translates $\{\Phi(g)(U_\ell)\}_{ g \in G}$ of $U_\ell \in \cU_x$ form a finite partition of $X$. Define a coding map $\kappa_\ell: X \to X_\ell$, where $X_\ell = G/G_\ell$, by
  $$\kappa_\ell(y) = gG_\ell \textrm{ if and only if } y \in \Phi(g)(U_\ell).$$
The maps $\kappa_\ell$ are equivariant with respect to the action of $G$ on $X$ and $X_\ell$. Taking the inverse limit with respect to the coset inclusions $X_{\ell+1} \to X_\ell$, we obtain a homeomorphism
  \begin{align}\label{eq-kappainfty}\kappa_\infty: X \to X_\infty=\lim_{\longleftarrow}\{X_{\ell+1} \to X_\ell\} \end{align}
which conjugates the actions of $G$ on $X$ and $X_\infty$, the latter being given by \eqref{eq-gactionxinfty}. By construction, we have $\kappa_\infty(x) = (eG_\ell)$, where $eG_\ell$ is the coset of the identity $e$ in $G/G_\ell$.

 For a given action $(X,G,\Phi)$ the choice of an adapted neighborhood system $\cU_x$ is not unique, and so the choice of a group chain $\cG^x$ associated to the action is not unique. The relations between the group chains representing the same conjugacy class of minimal equicontinuous actions were studied in detail in \cite{FO2002,DHL2016}, where the following result was proved.

\begin{prop}\cite{DHL2016}\label{prop-uniqueness}
Let $G$ be a finitely generated group, and let $\cG = \{G_\ell\}_{\ell \geq 0}$ and $\cG' = \{G_\ell' \}_{\ell \geq 0}$ be group chains with $G = G_0 = G_0'$. Let $(X_\infty, G)$ and $(X_\infty',G)$ be minimal equicontinuous actions associated to $\cG$ and $\cG'$ by \eqref{eq-xinfty} and \eqref{eq-gactionxinfty}. Then $(X_\infty, G)$ and $(X_\infty',G)$ are conjugate by a homeomorphism $\phi: X_\infty \to X_\infty'$ if and only if there exists a sequence of elements $\{g_\ell\}_{\ell \geq 0} \subset G$ such that for any $n \geq 0 $ and any $\ell > n$, $g_nG_n = g_\ell G_n$, and, possibly after passing to a subsequence, there are inclusions
  \begin{align}\label{eq-frm}G=G_0 \supset G_1' \supset g_1 G_1 g_1^{-1} \supset G_2' \supset g_2 G_2 g_2^{-1} \supset \cdots. \end{align}
In addition, $\phi$ is \emph{pointed}, that is, $\phi(eG_\ell) = (e G_\ell')$ if and only if one can choose $g_\ell = e$ in \eqref{eq-frm} for all $\ell \geq 0$.
\end{prop}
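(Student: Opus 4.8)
The plan is to prove this, as in \cite{FO2002,DHL2016}, by reducing everything to two elementary moves on group chains that do not change the conjugacy class of the associated Cantor action, and then showing that any conjugacy both arises from, and produces, exactly the data in \eqref{eq-frm}. \emph{Move 1 (telescoping).} Replacing $\{G_\ell\}$ by an infinite subchain $\{G_{\ell_k}\}_{k\geq 0}$ yields a \emph{pointedly} conjugate action, since a cofinal subsequence has the same inverse limit, so the coordinate‑forgetting map $X_\infty\to\lim_{\longleftarrow}\{G/G_{\ell_{k+1}}\to G/G_{\ell_k}\}$ is a $G$‑equivariant homeomorphism fixing the basepoint $(eG_\ell)$. \emph{Move 2 (basepoint change).} If $\{g_\ell\}_{\ell\geq 0}\subset G$ satisfies $g_nG_n=g_\ell G_n$ for all $\ell>n$ --- equivalently $y:=(g_\ell G_\ell)_{\ell\geq 0}$ is a point of $X_\infty$ --- then, using $g_{\ell+1}\in g_\ell G_\ell$, the subgroups $\{g_\ell G_\ell g_\ell^{-1}\}_{\ell\geq 0}$ again form a group chain, and by the construction of Section \ref{subsec-chainmodels} it is precisely the isotropy chain of $(X_\infty,G)$ at $y$ relative to the adapted neighborhood basis $\{g_\ell U_{e,\ell}\}$, where $U_{e,\ell}$ is the basic clopen set of \eqref{eq-basicset}, which is adapted with $G_{U_{e,\ell}}=G_\ell$. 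Hence the action with chain $\{g_\ell G_\ell g_\ell^{-1}\}$ is conjugate to the one with chain $\{G_\ell\}$ via a homeomorphism taking its basepoint to $y$, and this conjugacy is pointed precisely when $g_\ell\in G_\ell$ for all $\ell$.

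\emph{Sufficiency.} Given \eqref{eq-frm}, I would assemble the single interleaved chain $\cK:\ G\supset G_1'\supset g_1G_1g_1^{-1}\supset G_2'\supset g_2G_2g_2^{-1}\supset\cdots$, whose odd‑indexed subchain is $\{G_\ell'\}$ and whose even‑indexed subchain is $\{g_\ell G_\ell g_\ell^{-1}\}$ (with $g_0=e$). By Move~1, the inverse limit of $\cK$ is pointedly conjugate both to $X_\infty'$ and to the inverse limit of $\{g_\ell G_\ell g_\ell^{-1}\}$; by Move~2 the latter is conjugate to $X_\infty$. Composing these three maps gives $\phi\colon X_\infty\to X_\infty'$, and if every $g_\ell=e$ each map in the composition is pointed, so $\phi$ is pointed.

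\emph{Necessity and the main obstacle.} Conversely, given a conjugacy $\phi\colon X_\infty\to X_\infty'$, I would set $y:=\phi^{-1}(x')$ with $x'=(eG_\ell')$ the basepoint of $X_\infty'$, and write $y=(g_\ell G_\ell)_{\ell\geq 0}$; being a point of $X_\infty$, the sequence $\{g_\ell\}$ satisfies the coherence condition automatically. By Section \ref{subsec-chainmodels}, $\{g_\ell U_{e,\ell}\}$ is an adapted neighborhood basis at $y$ with isotropy chain $\{g_\ell G_\ell g_\ell^{-1}\}$, and $\{U_{e,\ell}'\}$ is an adapted neighborhood basis at $x'$ with isotropy chain $\{G_\ell'\}$. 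Since $\phi$ intertwines the actions, the sets $\phi(g_\ell U_{e,\ell})$ are adapted clopen neighborhoods of $x'$ shrinking to $\{x'\}$; a compactness argument in the Cantor set $X_\infty'$ lets me pass to subsequences of both families and relabel so that they strictly alternate in one decreasing chain $\cdots\supset\phi(g_\ell U_{e,\ell})\supset U_{\ell+1}'\supset\phi(g_{\ell+1}U_{e,\ell+1})\supset\cdots$. Using that $\phi$ preserves the stabilizer of a clopen set and that $A\subset B$ with $B$ adapted forces $G_A\subset G_B$, I read off exactly \eqref{eq-frm} for these subsequences; if $\phi$ is pointed then $y=x$ and one may take $g_\ell=e$. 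The part demanding genuine care is the basepoint bookkeeping: a conjugacy need not match the basepoints of $X_\infty$ and $X_\infty'$, which is precisely why the conjugated chain $\{g_\ell G_\ell g_\ell^{-1}\}$, rather than $\{G_\ell\}$, must appear in \eqref{eq-frm}, and one must recognize that the coherence hypothesis on $\{g_\ell\}$ is nothing but the statement $(g_\ell G_\ell)_{\ell\geq 0}\in X_\infty$. The interleaving of the two neighborhood bases and the two stabilizer identities are then routine once the clopen sets involved are correctly located.
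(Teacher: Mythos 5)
The paper does not prove Proposition \ref{prop-uniqueness} itself but cites \cite{DHL2016} (see also \cite{FO2002}), and your argument is the standard one from those references: telescoping and basepoint change as the two elementary moves, interleaving the two adapted neighborhood bases at $\phi^{-1}((eG_\ell'))$ to extract \eqref{eq-frm}, and observing that the coherence condition on $\{g_\ell\}$ is exactly membership of $(g_\ell G_\ell)_{\ell\geq 0}$ in $X_\infty$. Your proof is correct, including the key identifications $G_{g_\ell U_{e,\ell}}=g_\ell G_\ell g_\ell^{-1}$, $G_{\phi(U)}=G_U$ for equivariant $\phi$, and $G_A\subset G_B$ for nested adapted sets, so no changes are needed.
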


The condition  $g_nG_n = g_\ell G_n$ for $\ell > n$ in Proposition \ref{prop-uniqueness} ensures that the chain $\{g_\ell G_\ell g_\ell^{-1}\}_{\ell \geq 0}$ is a descending group chain as in Definition \ref{defn-groupchain}.

\begin{remark}\label{remark-divisors}
{\rm
The inclusions \eqref{eq-frm} impose restrictions on the indices of subgroups in the chains $\cG$ and $\cG'$. Indeed, if 
  $ G_\ell' \supset g_\ell G_\ell g_\ell^{-1} $ then the index $|G: G_\ell'|$ divides the index $|G: g_\ell G_{\ell}g_{\ell}^{-1}|$. In particular, if $|G: G_\ell'| = p_1 p_2 \cdots p_\ell$, where $p_\ell = |G_{\ell-1}: G_\ell|$, $ \ell \geq 1$, are distinct primes, and $|G: G_\ell| = d^\ell$ for some integer $d \geq 2$, then the actions defined by the group chains $\cG$ and $\cG'$ are not conjugate, since the set of primes which divide $d$ is finite.
}
\end{remark}

\subsection{Stabilizers of points and group chains} \label{subsec-stabilizerschains}
Let $(X,G,\Phi)$ be a minimal equicontinuous system, let $\cU = \{U_\ell\}_{\ell \geq 0}$ be an adapted neighborhood basis at $x \in X$, and let $\cG^x = \{G_\ell\}_{\ell \geq 0}$ be an associated group chain. The \emph{kernel} of the group chain $\cG^x$ is the subgroup
  $$\cK(\cG^x) = \bigcap_{\ell \geq 0} G_\ell$$
of elements in $G$ which fix $x$, so the kernel of the group chain at $x$ is the stabilizer $G_x = \cK(\cG^x)$.

If $y \in X$ is another point, then for every $\ell \geq 0$ there is $g_\ell \in G$ such that $y \in \Phi(g_\ell)(U_\ell)$. It follows that $\cG^y = \{g_\ell G_\ell g_\ell^{-1}\}_{\ell \geq 0}$ is a group chain at $y$, and we can compute the stabilizer at $y$
  $$G_y = \cK(\cG^y) = \bigcap_{\ell \geq 0}g_\ell G_\ell g_\ell^{-1}.$$
If $y$ is in the orbit of $x$, that is, $y  = h \cdot x$ for some $h \in G$, then we can choose $g_\ell = h$, and in this case the stabilizers $G_x$ and $G_y$ are conjugate subgroups of $G$. If $y$ is not in the orbit of $x$, then the stabilizers $G_x$ and $G_y$ need not be isomorphic, as the following example shows.

\begin{ex}\label{eq-dihedral}
{\rm
Let $G = \langle a,b \mid bab=a^{-1}, b^2 = e \rangle$ be the dihedral group, where $e$ denotes the identity in $G$, let $G_\ell = \langle a^{2^\ell}, b \rangle$ for $\ell >0$, and $G_0 = G$. This example is very well-studied. For instance, the action defined by this group chain is conjugate to the action of the iterated monodromy group associated to the quadratic Chebyshev polynomial, see \cite{Nekr}. Group chains representing this action were studied in \cite{FO2002} and  \cite[Example 7.5]{DHL2016}. 

Consider the dynamical system associated to the group chain $\{G_\ell\}_{\ell \geq 0}$ by the construction in Section \ref{subsec-groupchainsconstr}. One sees that for all $\ell \geq 1$ the cosets in $X_\ell = G/G_\ell$ are represented by the powers of $a$, and so for each $x \in X$, the orbit of $x$ is given by $G(x) = \{a^n \cdot x \mid n \in \mZ \}$. 

So let $x = (e G_\ell)_{\ell \geq 0} \in X_\infty$, then $G_x = \cK(\cG^x) = \langle b \rangle$. The isotropy groups of the points in the orbit $G(x)$ are conjugate to $G_x$, more precisely,
  $$G_{a^n \cdot x} = \langle a^n b a^{-n} \rangle.$$
If $y \in X_\infty$ is not in the orbit of $x$, then by \cite{FO2002,DHL2016} the stabilizer of $y$ is trivial, $G_y = \{e\}$. In particular, this means that the action of $G$ on $X_\infty$ defined by $\cG^x$ is topologically free.

We show that the stabilizers of the points in $G(x)$ are pairwise distinct subgroups of $G$. We argue by contradiction. Suppose that $G_x = G_{a^n \cdot x}$. Then
  $a^nba^{-n} = b$, which implies that 
   $$e = b^2 = ba^n b a^{-n} = b a b \, b a^{n-1} b \, a^{-n} = a^{-1} ba^{n-1} b a^{-n} = a^{-2n},$$
 which contradicts the fact that $G$ is an infinite group. So $G_x \ne G_{a^n \cdot x}$.
}
\end{ex}

\subsection{Counting measure on $X$} \label{subsec-measure}

Let $(X,G,\Phi)$ be a minimal equicontinuous action, then the closure $E = \overline{\Phi(G)} \subset Homeo(G)$ in the uniform topology is a profinite compact group, called the \emph{Ellis}, or \emph{enveloping group} \cite{Auslander1988,Ellis1969}. The group $E$ acts on $X$, the isotropy group $E_x = \{\widehat{g} \in E \mid \widehat{g}(x) = x\}$ of its action at $x$ is a closed subgroup of $E$, and we have $X = E/E_x$. An element $g \in G$ acts on $E$ via group multiplication by $\Phi(g)$. The Haar measure  $\widehat{\mu}$ on $E$ is invariant with respect to this action and ergodic. The measure $\widehat{\mu}$ on $E$ pushes down to a probability measure $\mu'$ on $X$, and with this measure $(X,G,\Phi,\mu')$ is uniquely ergodic \cite{CP2008}.

Given a group chain $\cG^x = \{G_\ell\}_{\ell \geq 0}$ associated to the action $(X,G,\Phi)$ at a point $x \in X$, one defines a counting measure $\mu$ on the space $X_\infty$ in \eqref{eq-xinfty} by setting for every basic set $U_{g,m}$, defined by \eqref{eq-basicset},
  \begin{align}\label{eq-mubern}\mu (U_{g,m}) = \frac{1}{|G: G_m|},\end{align}
where $|G: G_m|$ is the index of $G_m$ in $G$. This measure is easily seen to be invariant under the action of $G$. It is immediate that the pullback of $\mu$ to $X$ along the conjugating map \eqref{eq-kappainfty} coincides with $\mu'$. By a slight abuse of notation we denote $\mu'$ and $\mu$ by the same symbol $\mu$ in the rest of the paper.

\section{Actions on trees}\label{subsec-tree}

In this section, we represent a minimal equicontinuous action as an action on the boundary of a spherically homogeneous tree $T$. Actions on trees, especially self-similar actions on $d$-ary trees, $d \geq 2$, are an active topic in Geometric Group Theory, see \cite{Nekr,Grig2011,GNS2015} for surveys. 

A \emph{tree} $T$ consists of a set of vertices $V = \bigsqcup_{\ell \geq 0} V_\ell$, where $V_\ell$ is a finite vertex set at level $\ell$, and of edges joining vertices in $V_{\ell+1}$ and $V_\ell$, for all $\ell \geq 0$, such that every vertex in $V_{\ell+1}$ is joined by an edge to a single vertex in $V_\ell$. A tree is \emph{rooted} if $|V_0| = 1$.  A tree $T$ is \emph{spherically homogeneous} if there is a sequence $n=(n_1,n_2,\ldots)$, called the \emph{spherical index} of $T$, such that for every $\ell \geq 1$ a vertex in $V_{\ell-1}$ is joined by edges to precisely $n_\ell$ vertices in $V_\ell$. We assume that $n_\ell \geq 2$ for $\ell \geq 1$. 

A spherically homogeneous tree $T$ is $d$-ary if its spherical index $n=(n_1,n_2,\ldots)$ is \emph{constant}, that is, $n_\ell = d$ for some positive integer $d$. If $d = 2$, then $T$ is called a \emph{binary} tree. 

 The spherical index $n=(n_1,n_2,\ldots)$ of a tree $T$ is \emph{bounded}, if there is $M > 0$ such that $n_\ell \leq  M$ for $\ell \geq 1$. A spherical index $n$ is \emph{unbounded} if it is not bounded.

Let $(X,G,\Phi)$ be a minimal equicontinuous action, $\cU_x = \{U_\ell\}_{\ell \geq 0}$ be a neighborhood basis at $x \in X$, and let $\cG^x = \{G_\ell\}_{\ell \geq 0}$ be an associated group chain as in Section \ref{subsec-equicts}. For $\ell \geq 0$, identify the vertex set $V_\ell$ with the coset space $X_\ell = G/G_\ell$. Join $v_\ell \in V_\ell$ and $v_{\ell+1} \in V_{\ell+1}$ by an edge if and only if $v_{\ell+1} \subset v_\ell$ as cosets. The obtained tree is spherically homogeneous, with entries $n_\ell = |G_{\ell-1}: G_{\ell}|$ in the spherical index $n$, for $\ell \geq 1$.

An infinite path in $T$ consists of a sequence of vertices $(v_\ell)_{\ell \geq 0}$ such that $v_{\ell+1}$ and $v_\ell$ are joined by an edge, for $\ell \geq 0$. The boundary $\partial T$ of $T$ is the collection of all infinite paths in $T$, and so it is the subspace
  $$\partial T = \{(v_\ell)_{\ell \geq 0} \subset \prod_{\ell \geq 0} V_\ell \mid v_{\ell+1} \textrm{ and }v_\ell \textrm{ are joined by an edge}\}.$$
The space $\partial T$ is a Cantor set with the relative topology from the product topology on $ \prod_{\ell \geq 0} V_\ell $. It is immediate that the identification of vertex sets $V_\ell$ with coset spaces $X_\ell$ induces an identification of $\partial T$ with the inverse limit space $X_\infty$ defined by \eqref{eq-xinfty}, with points in $X_\infty$ corresponding to infinite paths in $\partial T$, and clopen sets $U_{g,m}$ defined by \eqref{eq-basicset} corresponding to cylinder sets
  \begin{align}\label{eq-cylinderset}\partial T_{v_m} = \{ (u_\ell)_{\ell \geq 0} \in \partial T \mid v_m = u_m\}, \end{align}
  where $v_m$ is the vertex in $V_m$ identified with the coset $gG_m$ in $X_m$. The counting measure \eqref{eq-mubern} pushes forward to the measure on $\partial T$ which we denote by $\mu_{\partial T}$.

The group $G$ acts on vertex levels $V_\ell = X_\ell$, $\ell \geq 0$ by permutations. Since the action of $G$ preserves the containment of cosets, the action preserves the connectedness of the tree $T$, that is, the vertices $v_{\ell} \in V_{\ell}$ and $v_{\ell+1} \in V_{\ell+1}$ are joined by an edge if and only if for any $g \in G$ the images $g \cdot v_\ell \in V_{\ell}$ and $g \cdot v_{\ell+1} \in V_{\ell+1}$ are joined by an edge. Thus every $g \in G$ defines an automorphism of the tree $T$, and we can consider $G$ as a subgroup of the group of tree automorphisms $Aut(T)$. 

The composition of the map \eqref{eq-kappainfty} with the identification $X_\infty \to \partial T$ is a homeomorphism $\phi: X \to \partial T$. The action of $G$ on vertex levels $V_\ell$, $\ell \geq 0$ induces an action of $G$ on $\partial T$ by left translations, defined by \eqref{eq-gactionxinfty}. Thus there is the induced map $\phi_*: \Phi(G) \to Homeo(\partial T)$, and $\Phi(G)$ is identified with a subgroup of $Homeo(\partial T)$. The action $(\partial T, \Phi(G))$ is minimal equicontinuous, with the unique ergodic invariant measure $\mu_{\partial T}$ defined by the pushforward of \eqref{eq-mubern}. A pair of maps 
  \begin{align}\label{eq-treerep}(\phi, \phi_*): (X,\Phi(G)) \to (\partial T, Homeo(\partial T))\end{align}
is called a \emph{tree representation} of $(X,G,\Phi)$. 

\begin{remark}\label{remark-nonuniquerepresentation}
{\rm
The choice of a group chain associated to an action $(X,G,\Phi)$ is not unique, and consequently the choice of a tree representation \eqref{eq-treerep} is not unique. By an argument similar to the one in Remark \ref{remark-divisors} one can obtain a necessary condition under which trees $T$ and $T'$ with respective  spherical indices $n = (n_1,n_2,\ldots)$ and $n' = (n_1',n_2',\ldots)$ admit conjugate actions of the same group. Namely, there must exist subsequences $\{{i_\ell}\}_{\ell \geq 0}$ and $\{{i_\ell}'\}_{\ell \geq 0}$ such that for all $\ell \geq 0$ the product $n_1 n_2 \cdots n_{i_\ell}$ divides the product $n_1'n_2' \cdots n_{i_\ell}'$, and $n_1'n_2' \cdots n_{i_\ell}'$ divides $n_1 n_2 \cdots n_{i_{\ell+1}}$.

For instance, let $n = (p_1,p_2,\ldots)$ where $\{p_1,p_2,\ldots\}$ are distinct primes, and let $n'$ be bounded, that is, there is a constant $m \geq 0$ such that for all $\ell \geq 1$ we have $n_\ell' \leq m$. Then  the set of divisors $\{k \in \mN \mid k|n_\ell' \textrm{ for some } \ell \geq 1\}$ is finite, and so there is no map between $T$ and $T'$ which preserves the tree structure.
}
\end{remark}
   
\section{Locally non-degenerate actions on trees}\label{subsec-unnonconstant}

In this section we discuss the notion of a locally non-degenerate action, which we introduced in Section \ref{sec-intro}. We give a criterion for an action on a tree to be locally non-degenerate in terms of the geometry of the tree. We show that actions on rooted $d$-ary trees  generated by finite automata are locally non-degenerate. We also give examples of actions on trees with bounded or unbounded spherical index which are locally degenerate.

We use the notation of Section \ref{subsec-tree}. In particular, a clopen subset $\partial T_v$ defined in \eqref{eq-cylinderset}, is a subset of the boundary $\partial T$ of the tree $T$ containing all infinite paths passing through the vertex $v \in V$. We start by restating Definition \ref{defn-uniformnonconstanti} below for the convenience of the reader.

\begin{defn}\label{defn-uniformnonconstant}
Let $T$ be a spherically homogeneous tree, and let $G \subset Aut(T)$. The action of $g \in G$ on $\partial T$ is \emph{locally non-degenerate} if there exists $0 < \alpha_g \leq 1$ such that for any vertex $v \in V$, if $g$ fixes ${v}$ and $g|\partial T_{v} \ne id$, then
   \begin{align} \frac{ \mu_{\partial T}\left( \{ {\bf w} \in \partial T_{v} \mid g \cdot {\bf w} \ne {\bf w}\} \right)}{\mu_{\partial T}(\partial T_{v})} \geq \alpha_g.\end{align}

The action $(\partial T, G, \mu_{\partial T})$ is \emph{locally non-degenerate} if and only if the action of every $g \in G$ is locally non-degenerate.

If an action of $g \in G$, or the action $(\partial T,G,\mu_{\partial T})$ is not locally non-degenerate, then it is \emph{locally degenerate}.
\end{defn}

It is useful to have a criterion for when a homeomorphism of a tree is locally non-degenerate in terms of the geometry of the tree $T$. This criterion is given in Proposition \ref{prop-indexandK} below. 

\begin{prop}\label{prop-indexandK}
Let $T$ be a spherically homogeneous tree with spherical index $n = (n_1,n_2,\ldots)$, and let $g \in Aut(T)$. Suppose the following two conditions are satisfied:
\begin{enumerate}
\item The spherical index of $T$ is bounded, that is, there is $M > 0$ such that $n_\ell \leq  M$ for $\ell \geq 1$.
\item There is an integer $K_g> 0$ such that for any $\ell \geq 0$ and any vertex $v_\ell \in V_\ell$, if $g$ fixes ${v_\ell}$ and $g|\partial T_{v_\ell} \ne id$, then there exists $m \geq \ell$ and a vertex ${w_m} \in V_m \cap T_{v_\ell}$ such that $g \cdot w_m \ne w_m$ and $m - \ell \leq K_g$. 
\end{enumerate}
Then the action of $g$ is locally non-degenerate with respect to the probability measure $\mu_{\partial T}$.
\end{prop}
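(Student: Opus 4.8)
The plan is to reduce the claim to a uniform lower bound on the proportion of $\partial T_{v_\ell}$ which is moved by $g$, using the bounded branching to control how much measure lies in any fixed number of additional levels. First I would fix a vertex $v_\ell$ with $g \cdot v_\ell = v_\ell$ and $g|\partial T_{v_\ell} \ne id$, and apply hypothesis (2) to obtain $m$ with $\ell \le m \le \ell + K_g$ and a vertex $w_m \in V_m \cap T_{v_\ell}$ with $g \cdot w_m \ne w_m$. Since $g$ is a tree automorphism fixing $v_\ell$, every infinite path through $w_m$ is sent by $g$ to a path through $g \cdot w_m \ne w_m$, hence to a different path; thus $\partial T_{w_m} \subset \{ {\bf w} \in \partial T_{v_\ell} \mid g \cdot {\bf w} \ne {\bf w}\}$. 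Consequently
\begin{equation}\label{eq-lnd-lower}
\frac{\mu_{\partial T}\left( \{ {\bf w} \in \partial T_{v_\ell} \mid g \cdot {\bf w} \ne {\bf w}\} \right)}{\mu_{\partial T}(\partial T_{v_\ell})} \;\ge\; \frac{\mu_{\partial T}(\partial T_{w_m})}{\mu_{\partial T}(\partial T_{v_\ell})}.
\end{equation}

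Next I would compute the right-hand side of \eqref{eq-lnd-lower} explicitly from the spherical index. From the definition of $\mu_{\partial T}$ as the pushforward of the counting measure in \eqref{eq-mubern}, for any vertex $u \in V_k$ we have $\mu_{\partial T}(\partial T_u) = (n_1 n_2 \cdots n_k)^{-1}$, with the empty product equal to $1$ when $k=0$. Hence
\begin{equation*}
\frac{\mu_{\partial T}(\partial T_{w_m})}{\mu_{\partial T}(\partial T_{v_\ell})} \;=\; \frac{1}{n_{\ell+1} n_{\ell+2} \cdots n_m} \;\ge\; \frac{1}{M^{\,m-\ell}} \;\ge\; \frac{1}{M^{K_g}},
\end{equation*}
where the first inequality uses hypothesis (1) that $n_j \le M$ for all $j$, and the second uses $m - \ell \le K_g$ together with $M \ge n_1 \ge 2 > 1$ (so larger exponents only decrease the fraction). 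Setting $\alpha_g = M^{-K_g}$, which satisfies $0 < \alpha_g \le 1$, the two displays combine to give the bound required in Definition \ref{defn-uniformnonconstant} for every vertex $v_\ell$ fixed by $g$ with $g|\partial T_{v_\ell} \ne id$. Since $v_\ell$ was arbitrary, the action of $g$ is locally non-degenerate with constant $\alpha_g$.

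I do not expect a serious obstacle here; the argument is essentially bookkeeping. The one point that needs a little care is the inclusion $\partial T_{w_m} \subset \{{\bf w} \in \partial T_{v_\ell} \mid g \cdot {\bf w} \ne {\bf w}\}$: this relies on the fact that an automorphism $g$ of $T$ which fixes $v_\ell$ but satisfies $g \cdot w_m \ne w_m$ cannot fix any path passing through $w_m$, because such a path would have to pass through both $w_m$ and $g \cdot w_m$ at level $m$, which is impossible. A secondary subtlety is that hypothesis (2) is stated for vertices $v_\ell$ at which $g|\partial T_{v_\ell} \ne id$, which is exactly the hypothesis under which Definition \ref{defn-uniformnonconstant} requires the inequality, so there is no gap; and the monotonicity step ``larger exponent, smaller fraction'' genuinely needs $M \ge 2$, which holds since every $n_\ell \ge 2$. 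With these observations in place the proof is complete.
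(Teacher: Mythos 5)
Your proposal is correct and follows essentially the same route as the paper's proof: identify the subcube $\partial T_{w_m}$ on which $g$ moves every path, and bound its relative measure below by $1/M^{K_g}$ using the bounded spherical index and $m-\ell\le K_g$. The extra justification you give for the inclusion $\partial T_{w_m}\subset\{{\bf w}\mid g\cdot{\bf w}\ne{\bf w}\}$ is a welcome, if minor, elaboration of a step the paper states without comment.
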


\proof The map $g$ acts non-trivially on every point in the clopen set $\partial T_{w_m}$. Therefore, we have
$$\frac{\mu_{\partial T}(\{ {\bf u} \in \partial T_{v_\ell} \mid g \cdot {\bf u} \ne {\bf u}\})}{\mu_{\partial T}(\partial T_{v_\ell})} \geq \frac{\mu_{\partial T}( \partial T_{w_m} )}{\mu_{\partial T}(\partial T_{v_\ell})} = \frac{n_1n_2 \cdots n_\ell}{n_1n_2 \cdots n_\ell \cdots n_m} = \frac{1}{n_{\ell+1} \cdots n_{m}} \geq \frac{1}{n_{\ell+1} \cdots n_{\ell+K_g}}.$$
Since $n_\ell \leq  M$ for all $\ell \geq 1$, we obtain that 
 $$\frac{\mu_{\partial T}(\{ {\bf u} \in \partial T_{v_\ell} \mid g \cdot {\bf u} \ne {\bf u}\})}{\mu_{\partial T}(\partial T_{v_\ell})} \geq \frac{1}{M^{K_g}},$$
and $g$ is locally non-degenerate with $\alpha_g = 1/M^{K_g}$.
\endproof

Proposition \ref{prop-indexandK} gives sufficient conditions for an action to be locally non-degenerate. This conditions are not necessary, for instance, it is possible to have a locally non-degenerate action on a tree with strictly increasing spherical index.

\begin{ex}\label{odometer-ex}
{\rm
Let $G$ be a finitely generated group, acting freely on the boundary $\partial T$ of a spherically homogeneous tree $T$ with any spherical index. That is, for any $g \in G$ and any ${\bf u} \in \partial T$ if $g \cdot {\bf u} = {\bf u}$ then $g = id$, the identity element in $G$. Such an action is trivially locally non-degenerate, since no element has fixed points.
}
\end{ex}

\subsection{A class of locally non-degenerate actions}\label{subsec-automata}

We show that actions on $d$-ary trees of groups generated by finite automata are locally non-degenerate. Many well-known groups belong to this class, including the Basilica group and the Grigorchuk group, as well as iterated monodromy groups associated to quadratic post-critically finite polynomials,  see \cite{Nekr} for more specific examples and detailed discussions.

\subsubsection{Recursive definition of an automorphism of a $d$-ary tree}
Let $T$ be a $d$-ary tree, with the boundary $\partial T$ consisting of all infinite connected paths in $T$. As is common in the literature, we label the vertices in $T$ by finite words in the alphabet $\{0,1, \ldots, d-1\}$ as follows.  The root in $V_0$ is not labelled; $d$ vertices in $V_1$ are labelled by digits from $0$ to $d-1$. Every vertex in the set $V_\ell$ is labelled by a unique word of length $\ell$. If $v \in V_{\ell+1}$ and $w \in V_\ell$ are joined by an edge, and $v = s_1 s_2 \ldots s_{\ell+1}$, then $w = s_1 s_2 \ldots s_\ell$. It follows that every element of the boundary $\partial T$ can be uniquely represented by an infinite sequence $s_1s_2 \cdots$, where $s_\ell \in \{0,1,\ldots,d-1\}$, $\ell \geq 1$. More precisely, a path $s = s_1 s_2 \ldots$ passes through the vertex labelled by $s_1$ in $V_1$, by $s_1s_2$ in $V_2$ and, inductively, $s$ passes through the vertex labelled by $s_1s_2 \cdots s_\ell$ in $V_\ell$ for $\ell \geq 1$.

Let $w= w_1w_2 \cdots w_\ell$ be a word of length $\ell$. 
Using the word notation, a clopen subset $\partial T_w$ of $\partial T$, defined in \eqref{eq-cylinderset} is given by
  $$\partial T_w = \{s_1s_2\cdots s_m \cdots \in \partial T \mid s_i = w_i \textrm{ for }1 \leq i \leq \ell\}.$$
Since for all $\ell \geq 1$ the labels $w_\ell$ take values in the same set $\{0,1,\ldots,d-1\}$, for each $w \in V_\ell$ there is a homeomorphism 
  \begin{align}\label{eq-psiw}\psi_w: \partial T_w \to \partial T: w_1 w_2 \cdots w_\ell s_{\ell+1} s_{\ell+2} \cdots  \mapsto s_{\ell+1} s_{\ell+2} \cdots.\end{align}

Note that $\partial T = \partial T_0 \cup \partial T_1 \cup \cdots \cup \partial T_{d-1}$. We use the maps \eqref{eq-psiw} to define \emph{sections} as in \cite[Section 1.3.1]{Nekr}, and then to recursively define elements in $Aut(T)$ as in \cite[Section 1.4.2]{Nekr} (although we compose maps in the recursive formula in a different order than in \cite{Nekr}).

Let $h \in Aut(T)$, and let $\sigma_{h,1}$ be the non-trivial permutation of $V_1 = \{0,1, \ldots, d-1\}$ induced by the action of $h$ on $T$. For $k \in V_1$, the restriction of $h$ to $T_k$ gives the map $h: \partial T_k \to \partial T_{\sigma_{h,1}(k)}$. Using the identification \eqref{eq-psiw} we obtain a homeomorphism $h|_k: \partial T \to \partial T$ which we call a \emph{section} of $h$ at $k$. Thus $h|_k \in Aut(T)$ is the map uniquely defined by the concatenation of sequences
   $$h(k \, s_2 \, s_3 \ldots) = \sigma_{h,1}(k)\, h|_k(s_2 \, s_3 \ldots).$$

Formally, for $0 \leq k \leq d-1$, to define $h|_k \in Aut(T)$ we set for every infinite path $s \in \partial T$
 \begin{align}\label{eq-section} h|_k (s) =  \psi_{\sigma_{h,1}(k)} \circ h \circ \psi_k^{-1}(s). \end{align}
 Then we can write the element $h$ as the composition
 \begin{align}\label{eq-composition} h = (h|_{\sigma_{h,1}^{-1}(1)},\ldots,h|_{\sigma_{h,1}^{-1}(d-1)})\sigma_{h,1}, \end{align}
where for each $0 \leq k \leq d-1$ we write just $h|_{\sigma_{h,1}^{-1}(k)}$ instead of $\psi_k^{-1} \circ h|_{\sigma_{h,1}^{-1}(k)} \circ \psi_k$, suppressing the notation for $\psi_k$ for simplicity.

Intuitively,  \eqref{eq-composition} splits the action of $h$ on $s$ into two stages: first we apply the permutation $\sigma_{h,1}$ to $V_1$, and then a suitable automorphism to each subtree $T_k$, for $0 \leq k \leq d-1$.

\subsubsection{Actions generated by finite automata are locally non-degenerate} Suppose $T$ is a $d$-ary tree, and $G \subset Aut(T)$. Suppose the element $g \in Aut(T)$ can be computed by a finite automaton, which is equivalent to the set of sections $\cS_g=\{g|_v \mid v \in V_\ell, \, \ell \geq 0\}$ being finite \cite[Section 1.3]{Nekr}. 

Number the elements in $\cS_g$, that is, $\cS_g = \{h_1,\ldots,h_k\}$ for some $k \geq 1$. For each $1 \leq i \leq k$ such that $h_i \ne id$, there exists $\ell_i \geq 0$ and a vertex $w_{\ell_i} \in V_{\ell_i}$ such that $h_i(w_{\ell_i}) \ne w_{\ell_i}$. Then $h_i$ acts non-trivially on any path in the clopen set $\partial T_{w_{\ell_i}}$. Let 
\begin{align}\label{eq-alphai} \alpha_i = \mu_{\partial T} (\partial T_{w_{\ell_i}})/ \mu_{\partial T}(\partial T) = \mu_{\partial T} (\partial T_{w_{\ell_i}}), \end{align}
and
  \begin{align}\label{eq-alphaautomata}\alpha_g = \min \{\alpha_i \mid 1\leq i \leq k, \, h_i \ne id\}. \end{align}
Now, suppose $g \in G$ fixes a vertex $v \in V_\ell$. Then $g|\partial T_v = g|_v = h_i$ for some $1 \leq i \leq k$, and Definition \ref{defn-uniformnonconstant} is satisfied with constant $\alpha_g$ defined by \eqref{eq-alphaautomata}. Therefore, the action of $g \in G$ is locally non-degenerate.

\subsection{Actions which are locally degenerate}\label{subsec-nonunonconst} We give examples of actions on rooted trees which are locally degenerate. Example \ref{ex-notnonconst-unbounded} is that of an action on a spherically homogeneous tree with strictly increasing spherical index, that is, hypothesis (1) in Proposition \ref{prop-indexandK} does not hold for this example. Example \ref{ex-notnonconstant} is an example of an action on a $d$-ary tree for which hypothesis (2) in Proposition \ref{prop-indexandK} does not hold.

\begin{ex}\label{ex-notnonconst-unbounded}
{\rm
Let $T$ be a tree with spherical index $n = (n_1,n_2,\ldots)$, where $2 \leq n_1 < n_2 < \cdots$ is an increasing sequence of integers. Let $H \subset Aut(T)$ be a group which acts minimally and equicontinuously on $\partial T$. We define an element $c \in Aut(T)$ whose action on $\partial T$ is locally degenerate. Then the action of $G = \langle c,H \rangle$ on $\partial T$ is minimal, equicontinuous and locally degenerate. As before, we denote by $\mu_{\partial T}$ the counting measure on $\partial T$ defined in Section \ref{subsec-tree}.

To this end, let ${\bf v} = (v_\ell)_{\ell \geq 0}$ be a path in $T$. For $\ell \geq 0$, choose a vertex $w_{\ell+1} \in V_{\ell+1} \cap T_{v_\ell}$ such that $w_{\ell+1} \ne v_{\ell+1}$, and a vertex $z_{\ell+2} \in V_{\ell+2}$ such that $w_{\ell+1}$ and $z_{\ell+2}$ are joined by an edge. Let $c$ act non-trivially on every path in the clopen basic set $\partial T_{z_{\ell+2}}$, for $\ell \geq 1$, for example, by applying a cyclic permutation to the vertices in $V_{\ell+3}$ joined to $z_{\ell+2}$. Let $c$ act as the identity map outside of the set $\bigcup_{\ell \geq 1} \partial T_{z_{\ell+2}}$. Then for each clopen set $\partial T_{w_{\ell+1}}$, for $\ell \geq 1$, we have
  \begin{align*} \frac{\mu_{\partial T}(\{ {\bf u} \in \partial T_{w_{\ell+1}} \mid c \cdot {\bf u} \ne {\bf u}\})}{\mu_{\partial T}(\partial T_{w_{\ell+1}})} = \frac{\mu_{\partial T}(\partial T_{z_{\ell+2}})}{\mu_{\partial T}(\partial T_{w_{\ell+1}})} = \frac{1}{n_{\ell+2}}.\end{align*}
 Since the sequence $\{n_\ell\}_{\ell \geq 1}$ is increasing, the action of $c$ is locally degenerate.
}
\end{ex}

We now modify Example \ref{ex-notnonconst-unbounded} to build an action on a $d$-ary tree which is locally degenerate.

\begin{ex}\label{ex-notnonconstant}
{\rm 
 Let $T$ be a $d$-ary tree, for $d \geq 2$, and let $H \subset Aut(T)$ be a group which acts minimally and equicontinuously on $\partial T$. We define an element $c \in Aut(T)$ whose action on $\partial T$ is locally degenerate. Then the action of $G = \langle c,H \rangle$ on $\partial T$ is locally degenerate.

Let ${\bf v} = (v_\ell)_{\ell \geq 0}$ be a path in $T$. For $k \geq 1$, let $m_k = 2^k$. Let $w_{m_k}$ be a vertex in $V_{m_{k}}$ which is joined by an edge to $v_{m_k-1}$ and is distinct from $v_{m_k}$. Let $z_{m_{k+1}}$ be a vertex in $V_{m_{k+1}}$ joined by a path to $w_{m_k}$. This path consists of $2^k$ edges. Define $c$ so that it acts non-trivially on every path in the clopen set $\partial T_{z_{m_{k+1}}}$, and trivially outside of the set $\bigcup_{\ell \geq 1} \partial T_{z_{m_{\ell+1}}}$. Note that by construction for every $k \geq 1$ we have 
  $$\partial T_{w_{m_k}} \cap \left(\bigcup_{\ell \geq 1} \partial T_{z_{m_{\ell+1}}} \right)= \partial T_{z_{m_{k+1}}}.$$ 

Then for each clopen set $\partial T_{w_{m_k}}$, $k \geq 1$, we have
  \begin{align*} \frac{\mu_{\partial T}(\{ {\bf u} \in \partial T_{w_{m_k}} \mid c \cdot {\bf u} \ne {\bf u}\})}{\mu_{\partial T}(\partial T_{w_{m_k}})} = \frac{\mu_{\partial T}(\partial T_{z_{m_{k+1}}})}{\mu_{\partial T}(\partial T_{w_{m_k}})} =  {d^{2^{k}-2^{k+1}}} = d^{-2^{k}}.\end{align*}
Since $d^{-2^k} \to_{k \to \infty} 0$ then the action of $c$ on $\partial T$ is locally degenerate. 
}
\end{ex}

\section{Sets of points without holonomy of full measure}\label{sec-nontrivhol}

In this section we prove Theorem \ref{thm-holonomymeasure0}. This theorem gives necessary conditions for the set of points with trivial holonomy for the action $(X,G,\Phi,\mu)$ to have full measure. This theorem is one of the main ingredients of the proof of Theorem \ref{thm-mainmain}. 

\subsection{Lebesgue density} Let $(X,G,\Phi,\mu)$ be a minimal equicontinuous action. A choice of a tree representation for the action $(X,G,\Phi,\mu)$ gives $X$ an ultrametric $D$ \eqref{eq-metricd}. A Cantor set $X$ is a Polish space, and so the following result of Miller \cite{Miller2008} applies. 

Denote by $B(x,\epsilon) = \{ y \in X \mid D(x,y) < \epsilon\}$ an open ball around $x$ of radius $\e >0$.

\begin{thm} \cite[Proposition 2.10]{Miller2008} \label{thm-polishlebesgue}
Let $X$ be a Polish space, and suppose $X$ has an ultrametric $D$ compatible with its topology. Let $\mu$ be a probability measure on $X$, and let $A$ be a Borel set of positive measure. Then the \emph{Lebesgue density} of $x$ in $A$, given by
  \begin{align}\label{eq-lebesgue}\lim_{\epsilon \to 0} \frac{\mu(A \cap B(x,\epsilon))}{\mu(B(x,\epsilon))}\end{align}
exists and is equal to $1$ for $\mu$-almost every $x \in A$.
\end{thm}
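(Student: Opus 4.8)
The plan is to deduce this from two ingredients: the elementary fact that in an ultrametric space any two balls are either disjoint or nested, and a Vitali-type covering argument built on it.

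First I would reduce the statement to one inequality. Since $\mu(A \cap B(x,\e)) \le \mu(B(x,\e))$, the ratio in \eqref{eq-lebesgue} always lies in $[0,1]$, so it is enough to show that for $\mu$-almost every $x \in A$ one has $\liminf_{\e\to 0}\mu(A\cap B(x,\e))/\mu(B(x,\e)) \ge 1$; the $\limsup$ being automatically $\le 1$, this forces the limit to exist and equal $1$. After discarding the open $\mu$-null set $X\setminus\mathrm{supp}(\mu)$ — so that every ball in sight has positive measure and the ratios make sense — it suffices, for each fixed $t\in(0,1)$, to prove that
\[
A_t \ :=\ \Big\{ x\in A \ \Big|\ \liminf_{\e\to0}\tfrac{\mu(A\cap B(x,\e))}{\mu(B(x,\e))} < t \Big\}
\]
is $\mu$-null; then $\bigcup_n A_{1-1/n}$ is null and off it the $\liminf$ equals $1$.

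For the covering step I would use the following consequence of the ultrametric inequality: if a family $\mathcal B$ of balls covers a set $Y$ in the Vitali sense (every point of $Y$ lies in members of $\mathcal B$ of arbitrarily small radius), then $\mathcal B$ contains a countable \emph{pairwise disjoint} subfamily whose union still contains $Y$ — no enlargement constant is needed, precisely because balls are nested or disjoint. In the situation of this paper the ultrametric $D$ of \eqref{eq-metricd} takes values in $\{2^{-m}:m\ge0\}$, the balls are exactly the clopen cylinder sets, and for each $y\in Y$ one simply selects the largest cylinder of $\mathcal B$ through $y$; the maximal cylinders so obtained are automatically disjoint. Now fix $t$ and $\eta>0$ and, by outer regularity of $\mu$, choose an open $U\supseteq A_t$ with $\mu(U)<\mu(A_t)+\eta$. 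By the definition of $A_t$, for each $x\in A_t$ there are arbitrarily small $\e$ with $B(x,\e)\subseteq U$ and $\mu(A\cap B(x,\e))<t\,\mu(B(x,\e))$, and the collection of all these balls is a Vitali cover of $A_t$. Picking a disjoint subfamily $\{B_i\}$ with $A_t\subseteq\bigcup_i B_i$ and $B_i\subseteq U$ for all $i$, we obtain
\[
\mu(A_t)\ \le\ \sum_i\mu(A\cap B_i)\ <\ t\sum_i\mu(B_i)\ =\ t\,\mu\Big(\textstyle\bigsqcup_i B_i\Big)\ \le\ t\,\mu(U)\ <\ t\big(\mu(A_t)+\eta\big),
\]
and letting $\eta\to0$ gives $\mu(A_t)\le t\,\mu(A_t)$, hence $\mu(A_t)=0$ since $t<1$, which finishes the argument.

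The one delicate point — and the main obstacle in Miller's full generality — is the ultrametric Vitali selection when $D$ is an arbitrary compatible ultrametric: a naive maximal disjoint subfamily may fail to cover $Y$ when the supremum of admissible radii at a point is not attained, so one must run the selection giving priority to balls of (near-)maximal radius, via a transfinite/stopping-time construction; this is routine but is where the real work sits, and in the discrete-valued case relevant here it disappears entirely. I would also record the alternative, equivalent route via martingales: for a fixed sequence $\e_k\downarrow0$, the partitions $\mathcal P_k$ of $X$ into $\e_k$-balls are countable, refine one another, and generate the Borel $\sigma$-algebra (since $D$ is compatible with the topology and $X$ is second countable), while $x\mapsto\mu(A\cap B(x,\e_k))/\mu(B(x,\e_k))$ is the martingale $\mathbb{E}[\mathbf{1}_A\mid\mathcal P_k]$; L\'evy's upward theorem then yields a.e.\ convergence to the conditional expectation of $\mathbf{1}_A$ on the full Borel $\sigma$-algebra, namely $\mathbf{1}_A$, which is exactly the assertion along the sequence (and along $\e\to0$ once $D$ has discrete value set).
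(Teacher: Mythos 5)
The paper does not prove this statement: it is imported verbatim from Miller \cite[Proposition 2.10]{Miller2008}, so there is no internal proof to compare against. Your argument is correct and is essentially the standard density-point proof for ultrametric spaces, which is also what Miller's argument amounts to. The reduction to showing $\mu(A_t)=0$ for each $t<1$ is right (after discarding $X\setminus\mathrm{supp}(\mu)$ so that all relevant balls have positive measure, and noting $A_t$ is Borel because $x\mapsto\mu(A\cap B(x,\epsilon))$ is locally constant at each scale), the Vitali computation $\mu(A_t)\le\sum_i\mu(A\cap B_i)<t\sum_i\mu(B_i)\le t\,\mu(U)<t(\mu(A_t)+\eta)$ is clean, and you have correctly isolated the only genuinely delicate point, namely the disjoint-subfamily selection for an arbitrary compatible ultrametric; for the metric \eqref{eq-metricd} actually used in this paper the radii lie in $\{2^{-m}\}$, every point has a well-defined maximal ball of the cover through it, and these maximal balls are automatically pairwise disjoint, so the selection is immediate. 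Two small remarks. First, in the general case the greedy/transfinite selection you allude to does work, because in an ultrametric space two intersecting balls satisfy $B(z,r)\subseteq B(z',r')$ whenever $r\le r'$, but writing it out is genuinely the bulk of Miller's proof, so as stated your argument fully proves the discrete-valued case and only sketches the general one. Second, your martingale route via L\'evy's upward theorem is also valid here, but note that it yields the limit only along the sequence $\epsilon_k=2^{-k}$; upgrading to the full limit $\epsilon\to0$ uses that $B(x,\epsilon)$ is constant for $\epsilon$ in each interval $(2^{-k-1},2^{-k}]$, since for a general ultrametric the ratio $\mu(B(x,\epsilon_{k+1}))/\mu(B(x,\epsilon_k))$ need not tend to $1$ and interpolation between consecutive scales fails. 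So the martingale argument proves exactly the version the paper needs, while the Vitali argument (with the extra selection work) is what is required for the theorem in the generality in which it is quoted.
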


\subsection{Proof of Theorem \ref{thm-holonomymeasure0}} For the convenience of the reader we first restate Theorem \ref{thm-holonomymeasure0} below.

\begin{thm}\label{thm-proveholonomy0}
Let $X$ be a Cantor set, let $G$ be a finitely generated group, and let $(X,G,\Phi,\mu)$ be a minimal equicontinuous action. Suppose $(X,G,\Phi,\mu)$ is locally non-degenerate. Then the set $X_0$ of points with trivial holonomy has full measure with respect to $\mu$.
\end{thm}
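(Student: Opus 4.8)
The plan is to adapt the Hurder--Katok density argument to the tree representation. First I would reduce to a single group element. Since $G$ is finitely generated, hence countable, and
$$X\setminus X_0 \;=\; \{x\in X\mid G_x\neq [G]_x\}\;=\;\bigcup_{g\in G} F_g,\qquad F_g:=\{x\in X\mid g\cdot x=x \text{ and } g\notin [G]_x\},$$
it suffices to prove $\mu(F_g)=0$ for every $g\in G$. Since local non-degeneracy of $(X,G,\Phi,\mu)$ and the conclusion $\mu(F_g)=0$ are both invariant under topological conjugacy, I may pass to a tree representation $(\phi,\phi_*)$ as in \eqref{eq-treerep} for which the action of $\phi_*(\Phi(G))$ on $\partial T$ is locally non-degenerate with respect to $\mu_{\partial T}$. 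Thus I identify $X=\partial T$, $\mu=\mu_{\partial T}$, and each $g$ with an automorphism of $T$ carrying a constant $\alpha_g>0$ as in Definition \ref{defn-uniformnonconstant}.

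Next I would translate the neighborhood-stabilizer condition into the language of the tree. If $x\in\partial T$ corresponds to the path $(v_\ell)_{\ell\geq 0}$, then the cylinders $\{\partial T_{v_\ell}\}_{\ell\geq 0}$ form a clopen neighborhood basis at $x$, and in the metric \eqref{eq-metricd} one has $B(x,2^{-\ell})=\partial T_{v_\ell}$. Suppose $x\in F_g$. Then $g\cdot x=x$ forces $g\cdot v_\ell=v_\ell$ for all $\ell\geq 0$, while $g\notin [G]_x$ forces $g|\partial T_{v_\ell}\neq id$ for every $\ell\geq 0$: indeed, if $g|\partial T_{v_\ell}=id$ for some $\ell$, then the open set $V_g:=\partial T_{v_\ell}$ witnesses $g\in[G]_x$, using that clopen sets form a basis of the topology of $X$. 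Hence each $x\in F_g$ produces a descending chain of cylinders on each of which $g$ fixes the defining vertex but acts non-trivially.

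Now I would derive a contradiction from the assumption $\mu(F_g)>0$. Applying Theorem \ref{thm-polishlebesgue} to the Borel set $F_g$ yields a point $x\in F_g$ of Lebesgue density $1$ in $F_g$; writing $(v_\ell)$ for the path of $x$, this means
$$\lim_{\ell\to\infty}\frac{\mu(F_g\cap \partial T_{v_\ell})}{\mu(\partial T_{v_\ell})}=1.$$
On the other hand, $F_g\subset\{w\in X\mid g\cdot w=w\}$, so $F_g\cap\partial T_{v_\ell}\subset\{w\in\partial T_{v_\ell}\mid g\cdot w=w\}$, and by local non-degeneracy of $g$, whose hypotheses hold for each $v_\ell$ by the previous paragraph,
$$\frac{\mu(F_g\cap\partial T_{v_\ell})}{\mu(\partial T_{v_\ell})}\;\leq\;1-\frac{\mu(\{w\in\partial T_{v_\ell}\mid g\cdot w\neq w\})}{\mu(\partial T_{v_\ell})}\;\leq\;1-\alpha_g\;<\;1 .$$
This contradicts the limit being $1$. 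Hence $\mu(F_g)=0$ for every $g\in G$, and therefore $\mu(X\setminus X_0)=\mu\bigl(\bigcup_{g\in G}F_g\bigr)=0$, so $X_0$ has full measure.

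The main point requiring care is the second paragraph: checking that a Lebesgue density point of $F_g$ genuinely yields a cylinder chain to which Definition \ref{defn-uniformnonconstant} applies, in particular the equivalence between $g\notin[G]_x$ and non-triviality of $g$ on every $\partial T_{v_\ell}$, which relies on clopen sets forming a neighborhood basis. Everything else is the standard tension between Lebesgue density $1$ and the uniform combinatorial lower bound $\alpha_g$ supplied by local non-degeneracy.
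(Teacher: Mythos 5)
Your proof is correct and follows essentially the same route as the paper: pass to a locally non-degenerate tree representation, observe that at a fixed point with non-trivial holonomy the ratio $\mu(\mathrm{Fix}(g)\cap\partial T_{v_\ell})/\mu(\partial T_{v_\ell})\leq 1-\alpha_g$ for every cylinder, invoke the Lebesgue density theorem for ultrametric Polish spaces, and take a countable union over $g\in G$. The only cosmetic difference is that you apply the density theorem directly to $F_g$ and argue by contradiction, whereas the paper applies it to $\mathrm{Fix}(g)$ and concludes that the points of non-trivial holonomy lie in the null set where the density fails to be $1$.
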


\proof Given $g \in G$, let ${\rm Fix}(g) = \{x \in X \mid g \cdot x = x\}$ be the set of fixed points of $g$. Note that ${\rm Fix}(g)$ is always a closed subset of $X$, and hence is Borel. The set ${\rm Fix}(g)$ may have positive measure, or measure zero. Assume that ${\rm Fix}(g)$ has positive measure.

Suppose $g \cdot x=x$. Recall from Definition \ref{defn-holpoints} that $g$ has trivial holonomy at $x \in X$ if $g$ fixes every point in an open neighborhood of $x$, and $g$ has non-trivial holonomy at $x$ otherwise.  We will show that under the hypothesis of the theorem the subset 
 $$\{x \in {\rm Fix}(g) \mid g \textrm{ has trivial holonomy at }x\in X\}$$
has positive measure in $X$, while the subset
 $$\{x \in {\rm Fix}(g) \mid g \textrm{ has non-trivial holonomy at }x\in X\}$$
has zero measure in $X$. 

If $g$ has trivial holonomy at $x \in {\rm Fix}(g)$, then it fixes every point in $B(x,\epsilon)$ for some $\epsilon >0$. Then the Lebesgue density of $x$ in ${\rm Fix}(g)$ given by \eqref{eq-lebesgue} exists and is equal to $1$. 

Next, suppose $g$ has non-trivial holonomy at $x \in {\rm Fix}(g)$. We will show that if the Lebesgue density of ${\rm Fix}(g)$ at $x$ exists, then it must be bounded away from $1$. 

Let $(\phi, \phi_*):(X, \Phi(G)) \to (\partial T,Homeo(\partial T))$ be a tree representation for the action, such that the action of $\Phi(G)$ on $\partial T$ is locally non-degenerate, and let $\phi(x) = (v_\ell)_{\ell \geq 0}$. For $\ell \geq 0$ let $U_\ell = \partial T_{v_\ell}$ be the clopen neighborhood of $(v_\ell)_{\ell \geq 0}$ consisting of all paths passing through the vertex $v_\ell$, and let
  $$W_\ell = \{ {\bf u} = (u_i)_{i \geq 0} \in \partial T_{v_\ell} \mid g \cdot {\bf u} \ne {\bf u}\}$$
be the subset of points in $U_\ell$ which are not fixed by the action of $g$. By the hypothesis the action of $g$ is locally non-degenerate, so there is a constant $0 < \alpha_g \leq 1$ such that
  $$\frac{\mu_{\partial T}(W_\ell)}{\mu_{\partial T}(U_\ell)} \geq \alpha_g.$$
Then we have
$$\frac{\mu_{\partial T}({\rm Fix}(g) \cap U_\ell)}{\mu_{\partial T}(U_\ell)} = \frac{\mu_{\partial T}(U_\ell) - \mu_{\partial T}(W_\ell)}{\mu_{\partial T}(U_\ell)} \leq 1 - \alpha_g.$$ 
It follows that if the Lebesgue density of ${\rm Fix}(g)$ at $\phi(x)$ exists, then it is bounded away from $1$. By Theorem \ref{thm-polishlebesgue} the subset of ${\rm Fix}(g)$ of points where the Lebesgue density does not exist or it exists but is not $1$ has measure zero in $\partial T$. Therefore, the subset of ${\rm Fix}(g)$ of points with non-trivial holonomy has measure zero.
Since the group $G$ is countable, the union
  $$\bigcup_{g \in G} \{x \in {\rm Fix}(g) \mid g \textrm{ has non-trivial holonomy at }x\in X\}$$
is a countable union of zero measure sets, and so has zero measure.
\endproof

Corollary \ref{cor-topessentiallyfree} is a reformulation of Theorem \ref{thm-holonomymeasure0} for the case of topologically free actions. Indeed, if an action $(X,G,\Phi, \mu)$ is topologically free, then $x \in X$ has non-trivial stabilizer if and only if it has non-trivial holonomy. By Theorem \ref{thm-holonomymeasure0} if $(X,G,\Phi,\mu)$ is topologically free and locally non-degenerate, then it is essentially free.

Kambites, Silva and Steinberg \cite{KSS2006} studied topologically and essentially free actions on $d$-ary rooted trees generated by finite automata. We recover their result as a consequence of Corollary \ref{cor-topessentiallyfree}.

\begin{cor}\label{cor-automatonaction}\cite[Theorem 4.3]{KSS2006}
Let $G$ be a group acting on a $d$-ary tree $T$, such that the action is transitive on each vertex level $V_\ell$, $\ell \geq 1$. Suppose the actions of $g \in G$ are computed by finite state automata. Then $(\partial T, G,\mu_{\partial T})$ is topologically free if and only if it is essentially free.
\end{cor}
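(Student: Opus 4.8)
The plan is to prove Corollary \ref{cor-automatonaction} by reducing it to Corollary \ref{cor-topessentiallyfree}. The key observation is that an action on a $d$-ary tree $T$ by a group $G$ whose elements are computed by finite state automata is exactly the situation treated in Section \ref{subsec-automata}: each $g \in G$ has finite section set $\cS_g = \{g|_v \mid v \in V_\ell,\ \ell \geq 0\}$. Hence, by the computation leading to \eqref{eq-alphaautomata}, for every $g \in G$ the action of $g$ on $\partial T$ is locally non-degenerate with constant $\alpha_g = \min\{\mu_{\partial T}(\partial T_{w_{\ell_i}}) \mid 1 \leq i \leq k,\ h_i \ne id\}$, where $\cS_g = \{h_1,\dots,h_k\}$ and each $h_i$ with $h_i \ne id$ fails to fix some vertex $w_{\ell_i}$. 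Therefore the action $(\partial T, G, \mu_{\partial T})$ is locally non-degenerate in the sense of Definition \ref{defn-uniformnonconstant}.

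Next I would observe that the hypotheses of Corollary \ref{cor-topessentiallyfree} require a minimal equicontinuous action. The assumption that $G$ acts transitively on each vertex level $V_\ell$ guarantees minimality of the action on $\partial T$: the orbit of any path meets every cylinder $\partial T_v$, since $G$ can move the level-$\ell$ vertex of that path to $v$. Equicontinuity is automatic for any subgroup of $Aut(T)$ acting on $\partial T$ with the standard ultrametric, since tree automorphisms act by isometries (this is the content of Section \ref{subsec-equicts} applied to the group chain given by the vertex stabilizers $G_{v_\ell}$ along a fixed path). The counting measure $\mu_{\partial T}$ is the unique invariant ergodic probability measure. So $(\partial T, G, \mu_{\partial T})$ is a minimal equicontinuous locally non-degenerate action.

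Now the equivalence is almost immediate. If $(\partial T, G, \mu_{\partial T})$ is essentially free, then it is topologically free: the set of points with trivial stabilizer has full measure, hence is dense (a full-measure set in a Cantor set with a fully-supported measure is dense), so the set of points with non-trivial stabilizer has empty interior, i.e. is meager (in fact it is a countable union $\bigcup_{g \ne e}{\rm Fix}(g)$ of closed sets, and each ${\rm Fix}(g)$ has empty interior since its complement is dense). This direction needs no non-degeneracy. Conversely, if $(\partial T, G, \mu_{\partial T})$ is topologically free, then since it is also locally non-degenerate, Corollary \ref{cor-topessentiallyfree} applies directly and yields that it is essentially free.

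I do not anticipate a serious obstacle: the only point requiring a little care is verifying that "computed by finite state automata" is precisely the hypothesis "$\cS_g$ finite for all $g$" used in Section \ref{subsec-automata}, together with handling the trivial elements (if $g = id$ then $g$ has no non-fixed vertices and local non-degeneracy holds vacuously, so one restricts to $g \ne e$ throughout). One should also note that the constant $\alpha_g$ may degenerate as $g$ ranges over all of $G$, but this is irrelevant: Definition \ref{defn-uniformnonconstant} and Theorem \ref{thm-holonomymeasure0} only require a constant for each fixed $g$, and the countable union argument in the proof of Theorem \ref{thm-proveholonomy0} absorbs the per-element measure-zero sets. Hence the argument goes through cleanly.
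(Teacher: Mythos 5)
Your proposal is correct and follows essentially the same route as the paper: it establishes local non-degeneracy of automaton actions via the computation of Section \ref{subsec-automata}, deduces the hard direction from Corollary \ref{cor-topessentiallyfree}, and handles the easy direction by a standard density/meagerness argument for minimal actions. The extra checks you include (minimality from level-transitivity, equicontinuity from acting by tree automorphisms, the irrelevance of $\alpha_g$ varying with $g$) are all sound and consistent with the paper's implicit assumptions.
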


\proof If a minimal action $(\partial T, G, \mu_{\partial T})$ is essentially free, then it contains a dense orbit of points with trivial stabilizer. Since the action is by homeomorphisms, for each $g \in G$ the set $K_g = \{x \in \partial T \mid g\cdot x \ne x \}$ is an open dense subset of $\partial T$. Then since $G$ is countable, the set $\bigcap_{g \in G} K_g$ of points with trivial stabilizer is a residual subset of $\partial T$. Thus $(\partial T, G, \mu_{\partial T})$ is topologically free.

For the converse, let $(\partial T, G, \mu_{\partial T})$ be topologically free.  We showed in Section \ref{subsec-automata} that a group action generated by finite automata is locally non-degenerate. Then by Corollary \ref{cor-topessentiallyfree} $(\partial T, G, \mu_{\partial T})$ is essentially free.
\endproof

\section{Sets of points with holonomy of full measure}\label{subsec-counterex}

Bergeron and Gaboriau \cite{BG2004} and Ab\'ert and Elek \cite{AE2007} gave examples of group actions on Cantor sets which are topologically free and not essentially free. By the discussion after Theorem \ref{thm-residualwoholonomy}, in these examples the set of points with non-trivial holonomy has full measure. We now describe another class of examples where the set of points with non-trivial holonomy has full measure, whose construction is somewhat easier than in \cite{AE2007,BG2004}.

Let $T$ be a tree with spherical index $n = (n_1, n_2,\ldots)$. As in Section \ref{subsec-automata} for $d$-ary trees, we label vertices in $V_\ell$, $\ell \geq 1$, by finite words of length $\ell$. More precisely, for $v \in V_\ell$ we have $v = w_1 w_2 \cdots w_\ell$, where $w_i \in \{0,1,\ldots, n_\ell - 1\}$. Then infinite paths in $\partial T$ are in bijective correspondence with infinite sequences $w_1w_2 \cdots$, where $w_\ell \in \{0,1,\ldots, n_\ell - 1\}$. Such a path passes through the vertices $v_\ell = w_1 \cdots w_\ell$, $\ell \geq 1$. Such a path ${\bf v}$ can be written either as $(v_\ell)_{\ell \geq 1}$, where $v_\ell = w_1 \cdots w_\ell$ is a vertex, or as an infinite sequence ${\bf v} = w_1w_2 \ldots$.

Define a homeomorphism $b$ of $\partial T$ as follows. Suppose that in the spherical index $n=(n_1,n_2, \ldots)$ we have $n_\ell \geq 3$ for $\ell \geq 1$. Recall that $T_{v_\ell}$ denotes the subtree of $T$ containing all paths through a given vertex $v_\ell = w_1 \cdots w_\ell$. Then the boundary $\partial T_{v_\ell}$ of $T_{v_\ell}$ consists of all infinite sequences starting with the finite word $v_\ell = w_1 \cdots w_\ell$.

The root $v_0$ is joined by edges to $n_1 \geq 3$ vertices in $V_1$, labelled by $0,1,\ldots, n_1-1$. Define $b$ to fix the vertices $0,1,\ldots, n_1-3$, and interchange the vertices $n_1-2$ and $n_1-1$. Define the action of $b$ on the rest of the tree by induction as follows.

Suppose the action of $b$ on $V_\ell$ is defined. Let $v_\ell \in V_\ell$, then $v_\ell$ is joined by edges to $n_{\ell+1}$ vertices in $V_{\ell+1}$ which are labelled by words of length $\ell+1$, namely $v_\ell 0, v_\ell 1, \ldots, v_\ell (n_{\ell+1}-1)$. If $b \cdot v_\ell \ne v_\ell$, then for $0 \leq k \leq n_{\ell+1}-1$ we set $b \cdot v_{\ell}k = (b \cdot v_\ell)k$, that is, the action of $b$ fixes the $(\ell+1)$-st entry in the sequence and only changes some of the preceding entries. If $b \cdot v_\ell = v_{\ell}$, then we define $b\cdot v_\ell k = v_\ell k$ for $0 \leq k \leq n_{\ell+1}-3$, and we set 
  \begin{align}\label{eq-jump}b \cdot v_\ell (n_{\ell+1} - 2) = v_\ell (n_{\ell+1}-1) \, \textrm{  and  } \, b\cdot v_\ell (n_{\ell+1} - 1) = v_\ell (n_{\ell+1}-2),\end{align}
  that is, $b$ interchanges $v_\ell(n_{\ell+1}-2)$ and $v_\ell(n_{\ell+1}-1)$, and fixes other vertices in $V_{\ell+1}$ joined to $v_\ell$.
  
The set ${\rm Fix}(b) \subset \partial T$ is non-empty. Indeed, let $s = s_1s_2 \cdots$ be a sequence. By definition of $b$  we have that $b \cdot s \ne s$ if and only if $s_\ell = n_\ell-2$ or $s_\ell = n_\ell-1$ for some $\ell \geq 1$. Therefore, every infinite sequence $s = s_1s_2 \cdots$ such that $s_\ell \leq n_\ell-3$ for all $\ell \geq 1$ is a fixed point of $b$. 

Also, $b$ is not the identity on any open set. Indeed, let $s = s_1s_2 \cdots$ be a fixed point of $b$, and let $U \subset \partial T$ be an open neighborhood of $s$. Then $U$ contains a basic clopen set $\partial T_{s_1 \cdots s_\ell}$, for some $\ell \geq 1$. Since $s$ is a fixed point of $b$, $s_{\ell+1} \ne n_{\ell +1} -2$ and $s_{\ell +1 } \ne n_{\ell+1}-1$. However, the open set $\partial T_{s_1 \cdots s_\ell}$ contains the union of clopen sets $\partial T_{s_1 \cdots s_\ell(n_{\ell+1}-2)} \cup \partial T_{s_1 \cdots s_\ell(n_{\ell+1}-1)}$ on which $b$ acts non-trivially by \eqref{eq-jump}. Therefore, $b$ is not the identity on $U$. Since $U$ is an arbitrary neighborhood of $s$, $s$ is a point with non-trivial holonomy. We have shown that $b$ has non-trivial holonomy at every point in ${\rm Fix}(b)$. 

\begin{thm}\label{thm-infiniteAssouad}
Let $T$ be a spherically homogeneous tree with spherical index $n = (n_1,n_2,\ldots)$ such that $n_{\ell+1}>2n_\ell$. Let $H$ be a group acting minimally and equicontinuously on $\partial T$, and let $G = \langle H,b\rangle \subset Aut(T)$ where $b$ is defined as above by \eqref{eq-jump}. Let $\mu_{\partial T}$ be the counting measure on $\partial T$. Then the following holds:
\begin{enumerate}
\item The action $(\partial T, G,\mu_{\partial T})$ is minimal and equicontinuous.
\item The set of points with non-trivial holonomy has full measure in $\partial T$.
\end{enumerate}
\end{thm}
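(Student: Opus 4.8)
The plan is to prove the two assertions in turn, with the measure-theoretic statement (2) being the substantive one. For assertion (1), minimality and equicontinuity: the action of $G=\langle H,b\rangle$ contains the action of $H$, which is already minimal, so every $G$-orbit is dense and the action is minimal; equicontinuity follows because $G\subset Aut(T)$ acts on $\partial T$ by isometries with respect to the ultrametric coming from the tree structure (as recalled in Section \ref{subsec-tree}), so this is essentially immediate.

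For assertion (2), the strategy is to show that $\mu_{\partial T}({\rm Fix}(b))$ has full measure, and then invoke the fact, established in the discussion preceding the theorem, that $b$ has non-trivial holonomy at \emph{every} point of ${\rm Fix}(b)$. From the analysis of $b$ above, a sequence $s=s_1 s_2 \cdots$ fails to be fixed by $b$ if and only if $s_\ell \in \{n_\ell - 2, n_\ell - 1\}$ for some $\ell \geq 1$; equivalently, ${\rm Fix}(b)$ is exactly the set of $s$ with $s_\ell \leq n_\ell - 3$ for all $\ell$. Under the counting measure $\mu_{\partial T}$, the $\ell$-th coordinate is uniformly distributed on $\{0,1,\ldots,n_\ell-1\}$ and the coordinates are independent, so
\begin{equation}\label{eq-fixb-measure}
\mu_{\partial T}({\rm Fix}(b)) \;=\; \prod_{\ell \geq 1} \frac{n_\ell - 2}{n_\ell}.
\end{equation}
The complement has measure $1 - \prod_{\ell\geq 1}(1 - 2/n_\ell)$. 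The hypothesis $n_{\ell+1} > 2 n_\ell$ forces $n_\ell$ to grow at least geometrically (indeed $n_\ell > 2^{\ell-1} n_1 \geq 2^\ell$), so $\sum_{\ell \geq 1} 2/n_\ell < \infty$, hence the infinite product in \eqref{eq-fixb-measure} converges to a \emph{strictly positive} value. Thus ${\rm Fix}(b)$ has positive measure, and by the preceding discussion every point of ${\rm Fix}(b)$ is a point with non-trivial holonomy, so the set of points with holonomy has positive measure. Finally, ergodicity of $\mu_{\partial T}$ (unique ergodicity, as noted for minimal equicontinuous actions) and the fact that the set of points with non-trivial holonomy is $G$-invariant upgrade "positive measure" to "full measure", completing the proof.

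The only place requiring care — really the crux — is verifying that the growth condition $n_{\ell+1} > 2 n_\ell$ is exactly what makes $\sum 2/n_\ell$ summable and hence the product \eqref{eq-fixb-measure} strictly positive rather than zero; if instead $n_\ell$ were, say, linear in $\ell$ one would get $\prod(1 - 2/n_\ell) = 0$ and ${\rm Fix}(b)$ would be null, so the hypothesis is doing genuine work and should be used explicitly. Everything else is bookkeeping: identifying ${\rm Fix}(b)$ via the coordinate description of $b$, computing its measure as a product of independent coordinate events, and applying ergodicity to pass from positive to full measure.
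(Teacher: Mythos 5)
Your proof is correct and follows the same skeleton as the paper's: identify ${\rm Fix}(b)$ as the set of sequences with $s_\ell \leq n_\ell - 3$ for all $\ell$, show it has positive measure, recall that every point of ${\rm Fix}(b)$ has non-trivial holonomy, and use $G$-invariance of the holonomy set together with ergodicity of $\mu_{\partial T}$ to upgrade positive to full measure. The one place you genuinely diverge is the positivity step: you compute $\mu_{\partial T}({\rm Fix}(b)) = \prod_{\ell \geq 1}\left(1 - 2/n_\ell\right)$ exactly, viewing the counting measure as a product of uniform measures on the coordinates, and invoke the convergence criterion $\sum_\ell 2/n_\ell < \infty$, which the hypothesis $n_{\ell+1} > 2n_\ell$ guarantees. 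The paper instead estimates, for each cylinder $U_\ell = \partial T_{v_\ell}$ about a point of ${\rm Fix}(b)$, the complement $\mu_{\partial T}(U_\ell \setminus {\rm Fix}(b)) < 4/(n_1\cdots n_{\ell+1})$ by summing the level-by-level contributions, and concludes that the Lebesgue density of ${\rm Fix}(b)$ at every one of its points equals $1$. The two computations are equivalent --- the paper's series is exactly $1 - \prod_{k\geq 1}(1 - 2/n_{\ell+k})$ relative to $U_\ell$ --- but the paper's version records the slightly stronger density-one statement, which serves as the deliberate counterpoint to the locally non-degenerate case of Theorem \ref{thm-holonomymeasure0}, where the density at points with holonomy is bounded away from $1$. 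Your version is more economical if one only wants positive measure, and you are right that the growth hypothesis is doing exactly the work of making $\sum_\ell 1/n_\ell$ summable in either formulation.
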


\proof By assumption the orbits of points in $\partial T$ under the action of $H$ are dense in $\partial T$, so the action of $G = \langle H,b\rangle$ on $\partial T$ is minimal. Since $H$ and $b$ act by permutations on each level $V_\ell$, $\ell \geq 0$, the action of $G$ on $\partial T$ is equicontinuous. 

Let $\partial T$ be given an ultrametric \eqref{eq-metricd}. We show that the Lebesgue density of ${\rm Fix}(b)$ at every point in ${\rm Fix}(b)$ is $1$, and so ${\rm Fix}(b)$ must have positive measure. It follows that the set $\{{\bf v} \in \partial T \mid  [G]_{\bf v} \ne G_{\bf v}\}$ of points with non-trivial holonomy has full measure, since it is invariant under the action of $G$. 

Let ${\bf v} = (v_\ell)_{\ell \geq 0} \in {\rm Fix}(b)$, and let $U_\ell = \partial T_{v_\ell}$ be the subset of sequences in $\partial T$ which start with the finite word $v_\ell = w_1 \cdots w_\ell$ for $\ell \geq 1$. Set $U_0 = \partial T$ so $\mu_{\partial T}(U_0) = 1$. For $\ell \geq 1$ we have
 $$\mu_{\partial T}(U_\ell) = \frac{1}{n_1n_2 \ldots n_{\ell}}.$$ 
We first obtain an upper estimate on the measure of the complement $U_\ell - {\rm Fix}(b)$. 

For each $\ell \geq 0$ the element $b$ permutes two cylinder subsets of $U_\ell$, and we have for these subsets
  $$\mu_{\partial T}\left( \partial T_{v_\ell(n_{\ell+1}-2)} \cup \partial T_{v_\ell(n_{\ell+1}-1)} \right) = \frac{2}{n_1n_2 \ldots n_{\ell+1}}.$$
Next, each clopen set $\partial T_{v_\ell 0}, \partial T_{v_\ell1},\ldots, \partial T_{v_\ell(n_{\ell+1}-3)}$ contains two cylinder sets of measure $\frac{1}{n_1 n_2\cdots n_{\ell+2}}$ permuted by $b$. The measure of the union of these sets is equal to
  $$\frac{2(n_{\ell+1} - 2)}{n_1 n_2\cdots n_{\ell+2}} <  \frac{2 n_{\ell+1}}{n_1 \cdots n_{\ell+2}} = \frac{1}{n_1 n_2 \cdots n_{\ell}} \cdot \frac{2}{n_{\ell+2}}.$$

Inductively, we obtain that
  $$\mu_{\partial T} (U_\ell - {\rm Fix}(b)) = \frac{1}{n_1 \cdots n_{\ell}} \left( \frac{2}{n_{\ell+1}} + \frac{2(n_{\ell+1} - 2)}{n_{\ell+1} n_{\ell+2}} + \frac{2(n_{\ell+1} - 2)(n_{\ell+2} - 2)}{n_{\ell+1} n_{\ell+2}n_{\ell+3}} + \cdots \right) <  \frac{1}{n_1 \cdots n_{\ell}} \sum_{k\geq 1 } \frac{2}{n_{\ell+k}}.$$

By assumption $n_{\ell +k} > 2^{k-1} n_{\ell+1}$ for $k \geq 2$, therefore,
 $$\sum_{k \geq 1} \frac{2}{n_{\ell+k}} < \frac{2}{n_{\ell+1}} \left( 1 + \frac{1}{2} + \frac{1}{2^2} + \cdots \right) = \frac{4}{n_{\ell+1}},$$
and so 
  $$ \mu_{\partial T} (U_\ell - {\rm Fix}(b)) < \frac{4}{n_1n_2 \ldots n_{\ell+1}}.$$
It follows that 
  $$ \mu_{\partial T} ({\rm Fix}(b) \cap U_\ell)>  \frac{1}{n_1 \cdots n_{\ell}} - \frac{4}{n_1 \cdots n_{\ell+1}},$$
which implies that
$$   1 - \frac{4}{n_{\ell+1}}< \frac{\mu_{\partial T} ({\rm Fix}(b) \cap U_\ell)}{\mu_{\partial T}( U_\ell)} \leq 1.$$
Since $n_\ell \to \infty$ as $\ell \to \infty$, we obtain that the Lebesgue density of the set ${\rm Fix}(b)$ at $(v_\ell)_{\ell \geq 0}$ is $1$, and the statement of the theorem follows.
\endproof

\begin{remark}
{\rm
We note that the tree $T$ in Theorem \ref{thm-infiniteAssouad} has unbounded spherical index, so the generator $b$ does not satisfy the first condition in Proposition \ref{prop-indexandK}. Similarly, in the examples by Bergeron and Gaboriau \cite{BG2004} and Ab\'ert and Elek \cite{AE2007} the spherical index of the tree (equivalently, the index $|\Gamma_{\ell +1}: \Gamma_{\ell}|$ of subgroups in the group chain associated to the action) is not bounded.
}
\end{remark}

\section{Conjugate stabilizer subgroups}\label{sec-stabilizers}

In this section we prove Theorem \ref{thm-stabilizers} and the main Theorem \ref{thm-mainmain}.

\subsection{Proof of Theorem \ref{thm-stabilizers}} We restate the theorem in Theorem \ref{thm-stabilizers-proof} below in a slightly different form, which will be useful in the proof of Theorem \ref{thm-application}. 

Recall from the Introduction that a point $x \in X$ has trivial holonomy if for any $g \in G_x$, where $G_x$ is the stabilizer of the action at $x$, there exists an open set $U_g \owns x$ such that $g|_{U_g} = id$. We denote by $X_0$ the set of all points with trivial holonomy in $X$. 

\begin{thm}\label{thm-stabilizers-proof}
Let $(X,G,\Phi)$ be a minimal equicontinuous action of a finitely generated group $G$ on a Cantor set $X$. Let $x \in X_0$ be a point with trivial holonomy. Then the set  of stabilizers of the points in the orbit of $x$
  \begin{align}\label{eq-stabfinite} \{G_{g \cdot x} \mid g \in G\} = \{g \,G_x \,g^{-1} \mid g \in G\} \end{align}
is finite if and only if $(X,G,\Phi)$ is LQA. 

Moreover, if the action $(X,G,\Phi)$ is LQA, then for any other point with trivial holonomy $y \in X_0$ the stabilizer $G_y$ is conjugate to the subgroups in \eqref{eq-stabfinite}.
\end{thm}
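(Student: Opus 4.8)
The plan is to work with a group chain $\cG^x = \{G_\ell\}_{\ell \geq 0}$ adapted to $x \in X_0$, as in Section~\ref{subsec-stabilizerschains}, so that $G_x = \cK(\cG^x) = \bigcap_\ell G_\ell$. The stabilizers in \eqref{eq-stabfinite} are, by the discussion after Definition~\ref{defn-adaptednbhd}, exactly the kernels $\bigcap_\ell g_\ell G_\ell g_\ell^{-1}$ where $(g_\ell)_{\ell\geq 0}$ ranges over sequences with $g_\ell G_\ell \subset g_{\ell-1}G_{\ell-1}$, i.e. over points of $X_\infty$; conjugation by $g\in G$ corresponds to translating the whole sequence. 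The first task is to reformulate the \emph{finiteness} of \eqref{eq-stabfinite} in terms of the chain: I would show that \eqref{eq-stabfinite} is finite if and only if there is an $\ell_0$ such that for every $\ell \geq \ell_0$ the conjugates $gG_\ell g^{-1}$ with $g \in G_{\ell_0}$ stabilize in the sense that $G_x \subset G_\ell$ implies no proper refinement, i.e. the map $gG_{\ell_0} \mapsto g G_x g^{-1}$ already lists all members. The key point making this work is that $x$ has \emph{trivial holonomy}: by Definition~\ref{defn-holpoints}, for each $g \in G_x$ there is a clopen $U_g \owns x$ fixed pointwise by $g$; since $G_x$ is contained in each $G_\ell$ and since (using finite generation of $G$ and the fact that $G_x$ is an intersection of finite-index subgroups) $G_x$ is itself finitely generated as a subgroup only in special cases — instead I will use that trivial holonomy means $G_x = [G]_x$, and hence there is a single $\ell_1$ and a single adapted clopen $U_{\ell_1}$ such that \emph{every} element of $G_x$ acts trivially on $U_{\ell_1}$; this requires a compactness/Baire argument on $X$, or equivalently the observation that the increasing union $\bigcup_\ell \mathrm{Fix}^{\circ}(\cdot)$ exhausts, which I expect to extract from Theorem~\ref{thm-residualwoholonomy}'s proof machinery or prove directly using that $G_x$ fixes $x$ and the adapted sets shrink to $x$.

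With that in hand, the heart of the argument is the equivalence with LQA. For the direction "LQA $\Rightarrow$ finite": LQA gives $\epsilon>0$ such that any $g \in G$ is determined on a clopen set of diameter $<\epsilon$ by its restriction to any nonempty open subset. Choose $\ell_0$ with $\mathrm{diam}(U_{\ell_0}) < \epsilon$. If $g \in G_{U_{\ell_0}}$ and $g$ fixes $x$ with trivial holonomy, then $g$ is the identity on some neighborhood of $x$ inside $U_{\ell_0}$, hence by LQA $g$ is the identity on all of $U_{\ell_0}$; this forces $G_x = [G]_x$ to coincide with the pointwise stabilizer (in $G$) of the clopen set $U_{\ell_0}$, which is the same for all points of $U_{\ell_0}$. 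Translating $U_{\ell_0}$ by the finitely many coset representatives of $G_{U_{\ell_0}}$ in $G$ then shows there are only finitely many distinct stabilizers among \emph{all} points of $X$ lying over this partition, in particular among the orbit of $x$ — giving finiteness, and simultaneously the "moreover" clause, since any $y \in X_0$ lies in some translate $h\cdot U_{\ell_0}$ and its stabilizer is the corresponding conjugate. For the converse "finite $\Rightarrow$ LQA" I would argue contrapositively: if the action is not LQA, then for every $\ell$ there are $g_1^{(\ell)}, g_2^{(\ell)} \in G$ and nested clopen $V_\ell \subset U_\ell$ with $\mathrm{diam}(U_\ell)\to 0$ agreeing on $V_\ell$ but not on $U_\ell$; setting $g^{(\ell)} = (g_1^{(\ell)})^{-1} g_2^{(\ell)}$ one gets elements acting trivially on a clopen set but nontrivially on a slightly larger one, at arbitrarily small scales, and I would assemble these into a sequence of points $y_\ell$ whose stabilizers are forced to be pairwise non-conjugate (using that the "width" $U_\ell \setminus V_\ell$ on which the relevant element acts nontrivially shrinks, so distinct scales cannot give conjugate subgroups). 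Packaging this assembly correctly — ensuring the $y_\ell$ can be taken in a single orbit and that the produced subgroups really are distinct conjugacy classes — is where I will need to be careful, and this is the main obstacle.

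The "moreover" statement should fall out of the LQA direction essentially for free once the key clopen set $U_{\ell_0}$ of diameter $<\epsilon$ is fixed: every point $y \in X_0$, being in $X$, lies in exactly one translate $h \cdot U_{\ell_0}$ of the adapted partition, and the trivial-holonomy hypothesis together with LQA forces $G_y$ to equal $h\, G_{U_{\ell_0}}^{\mathrm{pt}}\, h^{-1}$ where $G_{U_{\ell_0}}^{\mathrm{pt}}$ is the (common) pointwise stabilizer; since $x$ also lies in some translate of $U_{\ell_0}$, its stabilizer is a conjugate of the same subgroup, and conjugates of conjugates are conjugates. The one subtlety is that a priori $G_y$ could be larger than the pointwise stabilizer of the clopen piece containing $y$ — but that is exactly ruled out by trivial holonomy plus LQA, by the same argument as above.

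I expect the genuinely delicate step to be the "finite $\Rightarrow$ LQA" implication: turning a failure of LQA at shrinking scales into an \emph{infinite} family of pairwise non-conjugate point-stabilizers within one orbit requires choosing the witnessing group elements and the basepoints compatibly, and controlling conjugacy via the scale at which an element transitions from acting trivially to acting nontrivially. If a clean orbit-based construction proves awkward, a fallback is to instead establish the finiteness statement for the set $\{G_x : x \in X_0\}$ of \emph{all} trivial-holonomy stabilizers (which is what Theorem~\ref{thm-stabilizers} as stated in the introduction actually asserts) and deduce the orbit version, since $\{G_x : x \in X_0\}$ being finite trivially implies \eqref{eq-stabfinite} is finite, and conversely along any failure of LQA one can find trivial-holonomy points (those points are residual by Theorem~\ref{thm-residualwoholonomy}) realizing infinitely many distinct stabilizers.
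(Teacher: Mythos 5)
Your forward direction (LQA implies finiteness) and the ``moreover'' clause match the paper's argument: fix an adapted set $U_{\ell}$ of diameter less than $\epsilon$, use trivial holonomy together with local quasi-analyticity to identify $G_y$ for every $y \in X_0 \cap U_\ell$ with the pointwise stabilizer $\ker\left(\Phi_\ell\colon G_{U_\ell} \to Homeo(U_\ell)\right)$, and translate by the finitely many $G$-images of $U_\ell$. One caveat: your preliminary claim that trivial holonomy \emph{by itself} yields a single adapted clopen $U_{\ell_1}$ fixed pointwise by all of $G_x$ is false in general --- the neighborhoods $V_g$ in \eqref{eq-nbhdstab} may shrink as $g$ ranges over an infinite $G_x$, and no compactness or Baire argument repairs this (in a non-LQA action one can have elements of $G_x=[G]_x$ acting nontrivially arbitrarily close to $x$). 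That uniformity is precisely what LQA supplies, so it cannot serve as a hypothesis-free reformulation preceding the case split.

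The genuine gap is in the converse. You propose to produce points whose stabilizers are ``pairwise non-conjugate,'' distinguished by the scale at which a witnessing element passes from acting trivially to acting nontrivially. This cannot work: every subgroup in \eqref{eq-stabfinite} has the form $g\,G_x\,g^{-1}$, so all of them are conjugate to one another by construction; what must be shown is that infinitely many of them are pairwise \emph{distinct} as subgroups. The paper achieves this by a binary-tree induction: it constructs $2^n$ points $y_{k_1\cdots k_n}$ in the single orbit $G(x)$, nested clopen sets $W_{k_1\cdots k_i}$, and elements $g_{k_1\cdots k_i 0}$ acting as the identity on $W_{k_1\cdots k_i 0}$ while moving every point of $W_{k_1\cdots k_i 1}$; if two words first differ at position $s$, the element $g_{j_1\cdots j_{s-1}0}$ belongs to one of the two stabilizers and not the other, forcing all $2^n$ stabilizers to be pairwise distinct. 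Your fallback (prove finiteness of $\{G_x \mid x\in X_0\}$ instead) merely relabels the same construction problem rather than solving it. As written, the converse direction is therefore not a proof.
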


\begin{remark}
{\rm
The restriction to points with trivial holonomy in Theorem \ref{thm-stabilizers-proof} is necessary. Indeed, if $x \in X$ is a point with non-trivial holonomy, then the set $\{G_{g \cdot x} \mid g \in G\}$ may be infinite even if $(X,G,\Phi)$ is topologically free (and so LQA), see Example \ref{eq-dihedral}. 
}
\end{remark}

\proof Let $x \in X_0$ be a point without holonomy, and let $\cU_x = \{U_\ell\}_{\ell \geq 0}$, $U_0=X$, be an adapted neighborhood system at $x$, see Section \ref{subsec-chainmodels} for terminology. Then $G_\ell = \{g \in G \mid g \cdot U_\ell = U_\ell\}$ is a group for any $\ell \geq 0$, with $G_0 = G$. Denote by $\Phi_\ell: G_\ell \to Homeo(U_\ell)$ or by $(U_\ell, G_\ell, \Phi_\ell)$ the induced action of $G_\ell$ on $U_\ell$.

Suppose $(X,G,\Phi)$ is LQA with $\epsilon \geq 0$, and let $\ell \geq 0$ be such that ${\rm diam}(U_\ell) < \epsilon$.  Then for any open set $W \subset U_\ell$, if $g|W = id|W$, then $g|U_\ell = id|U_\ell$. In particular, if $y \in U_\ell$ is without holonomy, then every element which fixes every point in an open neighborhood of $y$ in $U_\ell$ must fix every point in $U_\ell$. It follows that all points without holonomy in $U_\ell$ have equal stabilizers, that is, for all $y \in X_0 \cap U_\ell$ we have $G_y = \ker\{\Phi_\ell:G_\ell \to Homeo(U_\ell)\}$. The group $\ker (\Phi_\ell)$ is a normal subgroup in $G_\ell$, but it need not be normal in $G$. 

Let $g \notin G_\ell$, then $\widehat{x} = g \cdot x \in g\cdot U_\ell$, with $g \cdot U_\ell \cap U_\ell = \emptyset$. Moreover, $G_{\widehat{x}} = g G_x g^{-1}$, and $h \in G$ fixes an open neighborhood of $x$ if and only if $g h g^{-1}$ fixes an open neighborhood of $\widehat{x}$, so $\widehat{x}$ is a point without holonomy in $g \cdot U_\ell$.

Let $\widehat{y} \in g \cdot U_\ell \cap X_0$ be another point without holonomy. Then there exists a point $y \in U_\ell \cap X_0$  without holonomy such that $\widehat{y} = g \cdot y$. Then we have
  \begin{align} G_{\widehat{y}} & = g G_y g^{-1} =  g G_x g^{-1} = G_{\widehat{x}},\end{align}
so for all $\widehat{y} \in g \cdot U_\ell \cap X_0$ the stabilizers are equal, $G_{\widehat{y}} = g \ker (\Phi_\ell) g^{-1}$. Since the orbit of $U_\ell$ under the action of $G$ is finite, $\ker (\Phi_\ell)$ has a finite number of distinct conjugates in $G$, and the set $\{G_x \mid x \in X_0\}$ is finite. This proves that if $(X,G,\Phi)$ is LQA, then for any $x \in X_0$ the set $\{G_{g \cdot x} \mid g \in G\}$ is a finite set of conjugate subgroups, and the stabilizer of any point $y \in X_0$ without holonomy is conjugate to the subgroups in $\{G_{g \cdot x} \mid g \in G\}$.

We prove the converse by showing that if $(X,G,\Phi)$ is not LQA then the set $\{G_x \mid x \in X_0\}$ is infinite.

Suppose that the action $(X,G,\Phi)$ is not LQA. Given $x \in X_0$, we show that the set $\{G_{g \cdot x} \mid g \in G\}$ is infinite by induction. Namely, we will construct an increasing collection of finite subsets $Y_n$, $n \geq 1$, of the orbit $G(x) = \{z \in X_0 \mid z = g \cdot x, \, g\in G\}$, such that for $n \geq 1$ the cardinality of $Y_n$ is $2^n$, and all points $Y_n$ have pairwise distinct stabilizers. Points in $Y_n$ will be labelled by words $k_1 \cdots k_n$ of length $n$, where $k_i \in \{0,1\}$ for $1 \leq i \leq n$. The points will be chosen so that $y_0 = x$, and for $n \geq 1$ $y_{k_1 \cdots k_n 0} = y_{k_1 \cdots k_n}$.

We now start the construction of the subsets $Y_n$, $n \geq 1$. As in the first part of the proof, $\cU_x = \{U_\ell\}_{\ell \geq 0}$, $U_0=X$, is an adapted neighborhood system at $x$. We first construct $Y_1$.

Since $(X,G,\Phi)$ is not LQA, there exists an element $g_0 \in G$ which satisfies $g_0|U_1 = id$, and such that $g_0|(X - U_1)$ is not the identity. Choose $z \in X - U_1$ such that $g_0\cdot z \ne z$. By continuity there is an open neighborhood $O \owns z$ such that $g_0$ fixes no point in $O$. Choose an index $s_1 \geq 1$ large enough so that for some $h_{s_1} \in G$ we have $z \in h_{s_1} \cdot U_{s_1} \subset O$, then for any $z' \in h_{s_1} \cdot U_{s_1}$ we have $g_0\cdot z' \ne z'$. Set $W_0 = U_{s_1}$, and $W_1 = h_{s_1} \cdot U_{s_1}$. 

Then for any $z \in W_0$ we have $g_0 \in G_z$, in particular, for $z = x$. So we choose $y_0 = x$. For any $z \in W_1$ we have $g_0 \notin G_{z}$. Since the action is minimal and the set $W_1$ is open, we can choose $y_1 \in G(x) \cap W_1$. Since $x \in X_0$ and $y_1$ is in the orbit of $x$, then $y_1 \in X_0$. We set $Y_1 = \{y_0,y_1\}$.

Now suppose we are given a finite set of points labelled by words of length $n$, namely,
  $$Y_n = \{ y_{k_1 \cdots k_n} \mid k_i \in \{0,1\}, \, 1 \leq i \leq n\},$$ 
 and a finite collection of clopen sets, labelled by words of length $i$, for $1 \leq i \leq n$, namely,
   $$W_{k_1}, W_{k_1 k_2}, \cdots, W_{k_1k_2 \cdots k_n}, \textrm{ where }k_i \in  \{0,1\} \textrm{ for }1 \leq i \leq n.$$ 
We assume that these collections of sets have the following properties:
\begin{enumerate}
\item For every $1 \leq i < n$ and every word ${k_1\cdots k_{i+1}}$ we have an inclusion $W_{k_1\cdots k_{i+1}} \subset W_{k_1 \cdots k_i}$. That is, every set $W_{k_1 \cdots k_i}$ labelled by a word of length $i$ contains precisely two clopen sets labelled by words of length $i+1$, $W_{k_1 \cdots k_i0}$ and $W_{k_1 \cdots k_i1}$.
\item For every $1 \leq i < n$ and every word $k_1k_2 \cdots k_i$ there is an element $g_{k_1k_2 \cdots k_i0}\in G$ such that the restriction $g_{k_1k_2 \cdots k_i0}|W_{k_1k_2 \cdots k_i0}$ is the identity, and for every $z \in W_{k_1k_2 \cdots k_i1}$ we have $g_{k_1k_2 \cdots k_i0} \cdot z \ne z$. 

\item For every $1 \leq i \leq n$ and every $W_{k_1k_2 \ldots k_i}$ we have $y_{k_1k_2 \ldots k_i} \in W_{k_1k_2 \ldots k_i}$, where $y_{k_1k_2 \ldots k_i} \in Y_i \subset G(x)$. Also, $y_0 = x$, and for $1 \leq i < n$ we have $y_{k_1 \cdots k_{i} 0} = y_{k_1 \cdots k_{i}}$, so $Y_{i} \subset Y_{i+1}$. In particular, we have $y_{0 \cdots 0} = x$ for any word of zeros of length $i$.
\end{enumerate}

We claim that for any two distinct points $y_{j_1j_2 \ldots j_n}, y_{k_1k_2 \ldots k_n} \in Y_n$ the stabilizers $G_{y_{j_1j_2 \ldots j_n}}$ and $G_{y_{k_1k_2 \ldots k_n}}$ are distinct. Indeed, consider the words $j_1j_2 \ldots j_n$ and $k_1k_2\ldots k_n$, and let $s$ be the first digit such that $j_s \ne k_s$. Without loss of generality, we can assume that $j_s = 0$ and $k_s = 1$. Then by (1) we have that $y_{j_1j_2 \ldots j_n} \in W_{j_1j_2 \ldots j_n} \subset W_{j_1j_2 \ldots j_{s-1}0}$, and then by (2) we have $g_{j_1j_2 \ldots j_{s-1}0}  \in G_{y_{j_1j_2 \ldots j_n}}$. Similarly, by (1) we have $y_{k_1k_2 \ldots k_n} \in W_{k_1k_2 \ldots k_n} \subset W_{j_1j_2 \ldots j_{s-1}1}$, and then by (2) we have $g_{j_1j_2 \ldots j_{s-1}0}  \notin G_{y_{k_1k_2 \ldots k_n}}$. Then $G_{y_{j_1j_2 \ldots j_n}} \ne G_{y_{k_1k_2 \ldots k_n}}$.

We now implement the inductive step and construct $Y_{n+1}$, $W_{k_1 \cdots k_{n+1}}$ and $g_{k_1 \cdots k_n 0}$, where $k_i \in \{0,1\}$ for $1 \leq i \leq n$.

For each $k_1k_2 \ldots k_n$ we have $y_{k_1k_2 \ldots k_n} \in W_{k_1k_2 \ldots k_n}$, where $y_{k_1k_2 \ldots k_n} \in G(x)$. Since the action is not LQA, there exists a clopen neighborhood $W$ of $y_{k_1k_2 \ldots k_n} $ properly contained in $W_{k_1k_2 \ldots k_n} $, and an element $g_{k_1k_2 \ldots k_n 0}  \in G$ such that the restriction $g_{k_1k_2 \ldots k_n 0}|W$ is the identity, while the restriction $g_{k_1k_2 \ldots k_n 0}|(W_{k_1k_2 \ldots k_n }  - W )$ to the complement of $W$ in $W_{k_1k_2 \ldots k_n}$  is not the identity. Let $z \in W_{k_1k_2 \ldots k_n}  - W $ be so that $g_{k_1k_2 \ldots k_n 0}(z) \ne z$. By continuity there is a neighborhood $W' \owns z$ such that for every $z' \in W'$ we have $g_{k_1k_2 \ldots k_n 0}(z') \ne z'$.

Choose an index $s_{n+1} \geq 0$ and $h_0, h_1 \in G$ such that $y_{k_1k_2 \ldots k_n} \in h_0 \cdot U_{s_{n+1}} \subset W$ and $h_1 \cdot U_{s_{n+1}} \subset W'$ for an adapted neighborhood $U_{s_{n+1}} \in \cU_x$. If $k_1 \cdots k_n$ is a finite word of $0$'s, then we choose $h_0 = id$. Set $W_{k_1k_2 \ldots k_n 0}  = h_0 \cdot U_{s_{n+1}}$, and $W_{k_1k_2 \ldots k_n 1}  = h_1 \cdot U_{s_{n+1}}$. Then for every point $y' \in W_{k_1k_2 \ldots k_n 0}$ we have $g_{k_1k_2 \ldots k_n 0} \in  G_{y'}$, and for every $z' \in W_{k_1k_2 \ldots k_n 1}$ we have $g_{k_1k_2 \ldots k_n 0} \notin  G_{z'}$. We set $y_{k_1k_2\ldots k_n0} = y_{k_1k_2 \ldots k_n}$. Since the action is minimal, we can choose $z' \in G(y_0) \cap W_{k_1k_2 \ldots k_n 1}$, and then set $y_{k_1k_2 \ldots k_n1} = z'$. The collections of neighborhoods $W_{k_1}, W_{k_1 k_2}, \cdots, W_{k_1k_2 \ldots k_{n+1}}$, of elements $g_{0}, g_{k_10}, \ldots, g_{k_1 \cdots k_n0}$ and of points $Y_{n+1} = \{y_{k_1k_2 \cdots k_{n+1}} \mid k_i \in \{0,1\}, \, 1 \leq i \leq n \}$ we have constructed satisfy (1)-(3).

For $n \geq 1$, we have $\#Y_n = \# \{y_{k_1k_2\ldots k_n} \mid k_i \in \{0,1\}, 1 \leq i \leq n\} = 2^n$, and all points in this set are contained in the orbit $G(x) = G(y_0)$ and have distinct stabilizers. It follows $\{G_{g \cdot x} \mid g \in G\}$ is infinite. This finishes the proof of the theorem.
\endproof

\subsection{Proof of Theorem \ref{thm-mainmain}} We restate the theorem for the convenience of the reader below.

\begin{thm}\label{thm-mainmain-proof}
Let $X$ be a Cantor set, let $G$ be a finitely generated group, and let $(X,G,\Phi,\mu)$ be a locally quasi-analytic minimal equicontinuous action. Then the following is true.
\begin{enumerate}
\item There exists a subgroup $H \subset G$ such that the set of points with stabilizers conjugate to $H$ is residual in $X$. 
\item Suppose in addition  that $(X,G,\Phi,\mu)$ is locally non-degenerate. Then the set of points with stabilizers conjugate to $H$ has full measure in $X$.
\end{enumerate}
\end{thm}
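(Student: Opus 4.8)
## Proof Proposal for Theorem \ref{thm-mainmain-proof}

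The plan is to assemble the theorem from the two main ingredients already prepared in the excerpt: Theorem \ref{thm-stabilizers} (equivalently Theorem \ref{thm-stabilizers-proof}) and Theorem \ref{thm-holonomymeasure0} (equivalently Theorem \ref{thm-proveholonomy0}), together with the Epstein--Millett--Tischler residuality result (Theorem \ref{thm-residualwoholonomy}).

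For part (1), first I would invoke Theorem \ref{thm-residualwoholonomy}: since $G$ is finitely generated (hence countable) and $X$ is a compact metric space, the set $X_0$ of points with trivial holonomy is residual (a dense $G_\delta$) in $X$. Next, because the action is locally quasi-analytic, Theorem \ref{thm-stabilizers-proof} applies: fixing any $x \in X_0$, the set $\{G_{g\cdot x} \mid g \in G\}$ is a finite set of conjugate subgroups, and moreover \emph{every} point $y \in X_0$ has stabilizer $G_y$ conjugate to $G_x$. So I would set $H = G_x$ for a fixed choice of $x \in X_0$. Then the set $\{y \in X \mid G_y \text{ is conjugate to } H\}$ contains $X_0$, which is residual; hence it is residual. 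This establishes (1).

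For part (2), I would add the hypothesis that $(X,G,\Phi,\mu)$ is locally non-degenerate and apply Theorem \ref{thm-proveholonomy0}: the set $X_0$ of points with trivial holonomy has full $\mu$-measure. Combining this with the conclusion of Theorem \ref{thm-stabilizers-proof} used in part (1) — namely that every point of $X_0$ has stabilizer conjugate to $H$ — the set of points with stabilizers conjugate to $H$ contains the full-measure set $X_0$, so it too has full measure. This completes (2).

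I do not expect a serious obstacle here, since all the real work has been front-loaded into Theorems \ref{thm-stabilizers-proof} and \ref{thm-proveholonomy0}; the remaining task is purely a matter of stitching. The one point deserving care is consistency of the distinguished subgroup $H$ between the two parts: the $H$ produced in (1) must be exactly the one for which (2) holds. This is automatic because in both parts $H$ is taken to be $G_x$ for a point $x \in X_0$, and Theorem \ref{thm-stabilizers-proof} guarantees that all such stabilizers (for $x$ ranging over $X_0$) are mutually conjugate — so the choice of $x$ is immaterial and the same $H$ works throughout. A secondary subtlety worth a remark is that residuality and full measure need not, in general, coincide for a measure on a Cantor set, so (2) genuinely requires the extra local non-degeneracy hypothesis and is not a formal consequence of (1); I would note this explicitly, perhaps pointing to the examples of Section \ref{subsec-counterex} where $X_0$ is residual but has measure zero.
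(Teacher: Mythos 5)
Your proposal is correct and follows essentially the same route as the paper's own proof: Theorem \ref{thm-residualwoholonomy} gives residuality of $X_0$, Theorem \ref{thm-stabilizers} gives that all stabilizers over $X_0$ are conjugate to $H = G_x$ under the LQA hypothesis, and Theorem \ref{thm-holonomymeasure0} upgrades residual to full measure under local non-degeneracy. Your additional remarks on the consistency of $H$ between the two parts and on the independence of residuality from full measure are accurate but not needed beyond what the paper records.
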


\proof By Theorem \ref{thm-residualwoholonomy} the set $X_0$ of points with trivial holonomy is residual in $X$. Choose $x \in X_0$ and let $H=G_x$, where $G_x$ is the stabilizer of $x$. By Theorem \ref{thm-stabilizers}, if the action is LQA then the stabilizer of any other point in $X_0$ is conjugate to $H$. This proves the first statement. 

For the second statement, assume in addition that the action is locally non-degenerate. Then by Theorem \ref{thm-holonomymeasure0} the set $X_0$ has full measure, which completes the proof.
\endproof

\section{Applications}\label{sec-applications}

The goal of this section is to prove Theorems \ref{thm-application} and \ref{thm-alternative-LAQ}. We start by recalling some background on the invariant random subgroups, as needed for the proof of Theorem \ref{thm-application}.

\subsection{Preliminaries on invariant random subgroups}\label{subsec-irs}

We denote by ${\rm Sub}(G)$ the space of closed subgroups of a finitely generated group $G$. The space ${\rm Sub}(G)$ is equipped with the Chaubaty-Fell topology. Open sets in this topology are given by \cite{AM1966,AGV2014,Vor2012,BGN2015}
 \begin{align}\label{eq-chaubatyset}U_{A,B} = \{H \subset {\rm Sub}(G) \mid A \subset H, \, B \cap H = \emptyset \}, \end{align}
where $A$ and $B$ are finite sets. The space ${\rm Sub}(G)$ is a compact totally disconnected space, and $G$ acts on ${\rm Sub}(G)$ by conjugation. 

\begin{defn}\label{defn-irs}
An \emph{invariant random subgroup (IRS)} $\nu$ is a Borel probability measure on ${\rm Sub}(G)$ invariant under the action of $G$ on ${\rm Sub}(G)$ by conjugation.
\end{defn}

Let $(X,G,\Phi,\mu)$ be a minimal equicontinuous action, and consider the mapping
  \begin{align}\label{eq-stmap} {\rm St}: X \to {\rm Sub}(G): x \mapsto G_x \end{align}
which assigns to each $x \in X$ its stabilizer. Stabilizers of points in the same orbit in $(X,G,\Phi,\mu)$ are conjugate, so \eqref{eq-stmap} maps the orbit of $x$ in $X$ onto the orbit of $G_x$ in ${\rm Sub}(G)$. The map \eqref{eq-stmap} need not be injective. For instance, if $(X,G,\Phi,\mu)$ is a free action, which means that for any $x \in X$  we have $G_x = \{e\}$ where $e$ is the identity in $G$, then the image of \eqref{eq-stmap} is the trivial subgroup.

The properties of the mapping \eqref{eq-stmap} were studied in many works. We recall the following result. 

\begin{lemma}\cite[Lemma 5.4]{Vor2012}\label{lemma-vorobets}
Let $G$ act on a Hausdorff topological space $X$ by homeomorphisms. Then
\begin{enumerate}
\item The mapping \eqref{eq-stmap} is Borel measurable.
\item The mapping \eqref{eq-stmap} is continuous at $x \in X$ if and only if $[G]_x = G_x$, that is, $x$ is a point without holonomy.
\item If a sequence of points $\{x_\ell\}_{\ell \geq 0}$ converges to $x \in X$, and the sequence of stabilizers $\{G_{x_\ell}\}_{\ell \geq 0}$ converges to a closed subgroup $H \in {\rm Sub}(G)$, then $[G]_x \subset H \subset G_x$.
\end{enumerate}
\end{lemma}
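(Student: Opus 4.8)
The plan is to extract all three items from a single reformulation of what Chabauty--Fell convergence of stabilizers means, proving them in the order (3), (2), (1), since (2) will be a formal consequence of (3) while (1) is independent and elementary. First I would record the key translation: using the description \eqref{eq-chaubatyset} of basic open sets, and the fact that singletons in the countable discrete group $G$ are both open and compact, the convergence $G_{x_\ell}\to H$ in ${\rm Sub}(G)$ is equivalent to the statement that for each fixed $g\in G$ one has $g\in H$ if and only if $g\in G_{x_\ell}$ for all sufficiently large $\ell$.

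For (3), assume $x_\ell\to x$ and $G_{x_\ell}\to H$. To get $H\subset G_x$: given $g\in H$, the translation above gives $g\cdot x_\ell=x_\ell$ for large $\ell$, and applying the homeomorphism $\Phi(g)$ and passing to the limit --- using that $X$ is Hausdorff, so limits are unique --- yields $g\cdot x=x$, i.e.\ $g\in G_x$. To get $[G]_x\subset H$: given $g\in[G]_x$, there is an open $V_g\owns x$ fixed pointwise by $g$; since $x_\ell\to x$ we have $x_\ell\in V_g$ eventually, hence $g\in G_{x_\ell}$ eventually, hence $g\in H$ by the translation above. This establishes $[G]_x\subset H\subset G_x$.

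For (2): if $[G]_x=G_x$, then for any net (a sequence, in all our applications, where $X$ is first countable) $x_\ell\to x$, compactness of ${\rm Sub}(G)$ lets us pass to a convergent subnet $G_{x_{\ell_k}}\to H$; by (3) we have $[G]_x\subset H\subset G_x$, forcing $H=G_x$. Since every subnet of $(G_{x_\ell})$ thus has a further subnet converging to $G_x$, the whole net converges to $G_x={\rm St}(x)$, so ${\rm St}$ is continuous at $x$. Conversely, if $[G]_x\subsetneq G_x$, pick $g\in G_x\setminus[G]_x$; then $g$ fixes no neighborhood of $x$ pointwise, so there is a net $y_\ell\to x$ with $g\notin G_{y_\ell}$ for all $\ell$. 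If ${\rm St}$ were continuous at $x$ we would have $G_{y_\ell}\to G_x$, and since $g\in G_x$ the translation above would force $g\in G_{y_\ell}$ eventually, a contradiction; hence ${\rm St}$ is discontinuous at $x$.

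For (1): for each $g\in G$, the set ${\rm Fix}(g)=\{x\in X\mid g\cdot x=x\}$ is closed, being the preimage of the diagonal of $X\times X$ under the continuous map $x\mapsto(\Phi(g)(x),x)$ (here $X$ Hausdorff is used). Then for finite $A,B\subset G$,
\begin{align*}
{\rm St}^{-1}(U_{A,B})=\Bigl(\bigcap_{a\in A}{\rm Fix}(a)\Bigr)\ \cap\ \Bigl(\bigcap_{b\in B}\bigl(X\setminus{\rm Fix}(b)\bigr)\Bigr)
\end{align*}
is Borel, and since $G$ is countable these countably many sets $U_{A,B}$ form a basis of ${\rm Sub}(G)$; hence ${\rm St}^{-1}$ of every open set is a countable union of Borel sets, so ${\rm St}$ is Borel measurable. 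The only delicate point in the whole argument is the opening translation and its use in (3): one must get the direction of Chabauty--Fell convergence exactly right, so that $G_{x_\ell}\to H$ really does decide membership of each individual $g$ for large $\ell$, and then feed this into the two inclusions through, respectively, continuity of the action (for $H\subset G_x$) and the existence of a fixing neighborhood (for $[G]_x\subset H$). Everything else is routine bookkeeping.
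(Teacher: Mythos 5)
Your proof is correct. Note that the paper does not prove this lemma at all --- it is quoted from Vorobets \cite[Lemma 5.4]{Vor2012} --- and your argument is the standard one for that result: the pointwise (``eventual membership'') characterization of Chabauty--Fell convergence in $\{0,1\}^G$ yields (3), compactness of ${\rm Sub}(G)$ plus a subnet argument upgrades (3) to (2), and closedness of ${\rm Fix}(g)$ together with countability of the basis $\{U_{A,B}\}$ gives (1); your handling of the sequence-versus-net issue in (2) is also appropriate.
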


For the action $(X,G,\Phi,\mu)$ the measure $\mu$ pushes forward along \eqref{eq-stmap} to the ergodic IRS $\nu = {\rm St}_*\mu$. For instance, if $(X,G,\Phi,\mu)$ is a free action then $\nu$ is an atomic measure supported on a single point in ${\rm Sub}(G)$. At the other extreme, if $X = \partial T$ is a boundary of a $d$-ary tree $T$, and $G$ is a weakly branch group, then the stabilizers of all points in $X$ are pairwise distinct \cite[Proposition 8]{BGN2015}, and $\nu$ is non-atomic.
    
 By Lemma \ref{lemma-vorobets} the set $X_0 = \{x \in X \mid G_x = [G]_x\}$ defined in \eqref{eq-ptX0} contains all points at which the mapping \eqref{eq-stmap} is continuous. Recall from \eqref{eq-ZclsX_0} that we denote $Z = \overline{\{ G_x  \mid x \in X_0\}}$.  
 
 While $X$ is a Cantor set, since the map \eqref{eq-stmap} may be discontinuous, the set $\{G_x \mid x \in X\}\subset {\rm Sub}(G)$ is only a Polish space and may contain isolated points. For example, if $G$ is the Grigorchuk group, then stabilizers of points with non-trivial holonomy are isolated points in ${\rm Sub}(G)$ \cite{Vor2012}. Thus the closed set $Z$ need not be a subset of $\{G_x \mid x \in X\}\subset {\rm Sub}(G)$ but of course it is a subset of $\overline{\{G_x \mid x \in X\}}\subset {\rm Sub}(G)$. More precisely, we have the following statement.
  
\begin{lemma}\label{lemma-gw}\cite[Proposition 1.2]{GW2014} 
If an action $(X,G,\Phi)$ is minimal, then $Z$ is the unique minimal subset in $\overline{\{G_x \mid x \in X\}}\subset {\rm Sub}(G)$ for the action of $G$ on ${\rm Sub}(G)$ by conjugation.
\end{lemma}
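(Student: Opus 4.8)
The plan is to realize $Z$ as a factor of a minimal system, by passing to the closure of the graph of the stabilizer map over $X$, whose first coordinate still carries the minimal action of $(X,G,\Phi)$. First I would set
\[
  \widehat X \;=\; \overline{\{(x,G_x)\mid x\in X\}}\;\subseteq\; X\times{\rm Sub}(G),
\]
with the product action of $G$ (by $\Phi$ on the first coordinate, by conjugation on the second), and let $\eta'\colon\widehat X\to X$ and $\pi'\colon\widehat X\to{\rm Sub}(G)$ be the $G$-equivariant coordinate projections. Then $\widehat X$ is closed and $G$-invariant, $\widetilde X\subseteq\widehat X$ (with $\widetilde X$ as in \eqref{eq-wtildeX}), $\eta'$ is surjective, $\pi'(\widehat X)=\overline{\{G_x\mid x\in X\}}=:Y$, and $\pi'(\widetilde X)=Z$. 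Since $G$ is countable, ${\rm Sub}(G)$ is compact metrizable, hence so is $X\times{\rm Sub}(G)$, so every point of $\widehat X$ is the limit of a sequence $(x_k,G_{x_k})$; thus Lemma \ref{lemma-vorobets}(3) yields the structural fact
\[
  (x,H)\in\widehat X \ \Longrightarrow\ [G]_x\subseteq H\subseteq G_x,
\]
and in particular any $(x,H)\in\widehat X$ with $x\in X_0$ has $H=G_x$, since then $[G]_x=G_x$.

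The core of the argument is the claim that $\widetilde X\subseteq\overline{G\cdot(x_0,H_0)}$ for every $(x_0,H_0)\in\widehat X$. Indeed, $\eta'$ is continuous, surjective and equivariant and $(X,G,\Phi)$ is minimal, so $\eta'\bigl(\overline{G\cdot(x_0,H_0)}\bigr)=\overline{G\cdot x_0}=X$; hence for each $y\in X_0$ there is $H'$ with $(y,H')\in\overline{G\cdot(x_0,H_0)}\subseteq\widehat X$, and the structural fact forces $H'=G_y$. Therefore $(y,G_y)\in\overline{G\cdot(x_0,H_0)}$ for all $y\in X_0$, and taking closures gives $\widetilde X\subseteq\overline{G\cdot(x_0,H_0)}$. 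Applying this with $(x_0,H_0)\in\widetilde X$, together with the reverse inclusion $\overline{G\cdot(x_0,H_0)}\subseteq\widetilde X$, reproves that $(\widetilde X,G,\widetilde\Phi)$ is minimal (one could instead just invoke the minimality quoted before Theorem \ref{thm-alternative-LAQ}).

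To finish, note that $Z=\pi'(\widetilde X)$ is a continuous equivariant image of the minimal system $\widetilde X$, hence minimal, and it is closed (being compact) and $G$-invariant. For uniqueness, given $H\in Y=\pi'(\widehat X)$, choose $(x,H)\in\widehat X$; by the core claim $\widetilde X\subseteq\overline{G\cdot(x,H)}$, so
\[
  Z=\pi'(\widetilde X)\subseteq\pi'\bigl(\overline{G\cdot(x,H)}\bigr)\subseteq\overline{G\cdot\pi'(x,H)}=\overline{G\cdot H}.
\]
Thus $Z$ is contained in the orbit closure of every point of $Y$. Consequently, if $M\subseteq Y$ is any minimal set, then $M=\overline{G\cdot H}$ for some $H\in M$, whence $Z\subseteq M$; since $Z$ is a nonempty closed $G$-invariant subset of the minimal set $M$, we get $M=Z$, so $Z$ is the unique minimal subset of $\overline{\{G_x\mid x\in X\}}$.

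I expect the step requiring the most care to be the structural fact $[G]_x\subseteq H\subseteq G_x$ for $(x,H)\in\widehat X$: it is exactly what compensates for the discontinuity of the stabilizer map at points with holonomy, which is the obstruction to pushing minimality of $X$ directly forward to $\overline{\{G_x\mid x\in X\}}$. It reduces to Lemma \ref{lemma-vorobets}(3) applied along sequences, which is legitimate because $X\times{\rm Sub}(G)$ is compact metrizable; the remaining steps are routine manipulations with minimal systems.
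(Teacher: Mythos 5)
The paper gives no proof of this lemma; it is quoted directly from \cite[Proposition 1.2]{GW2014}, and your argument is correct and essentially reproduces the Glasner--Weiss proof: form the closed invariant set $\widehat X=\overline{\{(x,G_x)\mid x\in X\}}$, use minimality of $(X,G,\Phi)$ plus Lemma \ref{lemma-vorobets}(3) to show that every orbit closure in $\widehat X$ contains $(y,G_y)$ for all $y\in X_0$ and hence contains $\widetilde X$, and then project by $\pi'$ to conclude that $Z$ lies in every orbit closure in $\overline{\{G_x\mid x\in X\}}$. All the supporting steps (surjectivity of $\eta'$, $\pi'(\widetilde X)=Z$, the appeal to metrizability of ${\rm Sub}(G)$ to work with sequences, and the deduction of both existence and uniqueness of the minimal set) check out.
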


The space ${\rm Sub}(G)$ is metrizable. One way to define a metric on this space is by a pullback from a metric on the space of Schreier graphs of subgroups of $G$, see \cite[Section 3]{AGV2014} or \cite{Grig2011,Bowen2015}. 
 
Let $S$ be a finite symmetric generating set for $G$. Given a subgroup $H \subset G$, construct the \emph{Schreier graph} $\Gamma_{G/H}$ as follows: the cosets in $G/H$ are the vertices of $\Gamma_{G/H}$, and two vertices $hH$ and $gH$ are joined by an edge, directed from $hH$ to $gH$ and labeled by $s \in S$ if and only if $gH = shH$. Edges in the graph are assigned unit length, and $\Gamma_{G/H}$ has a length metric $D_{\Gamma_{G/H}}$, that is, the distance between two points in $\Gamma_{G/H}$ is the length of the shortest path between these points. The graph $\Gamma_{G/H}$ has a distinguished vertex which is the coset of the identity $eH$, so $\Gamma_{G/H}$ is a pointed metric space. Denote by $B_{\Gamma_{G/H}}(r)$ a metric ball of radius $r$ in $\Gamma_{G/H}$ centered at $eH$. Given $H_1,H_2 \in {\rm Sub}(G)$, the metric balls $B_{\Gamma_{G/H_1}}(r)$ and $B_{\Gamma_{G/H_2}}(r)$ are isomorphic if and only if there exists an isometry $f: B_{\Gamma_{G/H_1}}(r) \to B_{\Gamma_{G/H_2}}(r)$ which preserves the labelling of edges and such that $f(eH_1) = eH_2$. 

Denote the set of all Schreier graphs associated to closed subgroups of $G$ by
  $${\rm Sch}(G,S) = \{\Gamma_{G/H} \mid H \in {\rm Sub}(G)\}.$$
We define a metric on ${\rm Sch}(G,S)$ by setting
  \begin{align}\label{eq-metricSchreier}D_{\rm Sch}(\Gamma_{G/H_1}, \Gamma_{G/H_2}) = \frac{1}{2^k}, && k = \max\{ r \geq 0 \mid B_{\Gamma_{G/H_1}}(r) \textrm{ and } B_{\Gamma_{G/H_2}}(r) \textrm{ are isomorphic}\}.\end{align}
The metric space ${\rm Sch}(G,S) $ is compact. The action of $G$ on ${\rm Sch}(G,S) $ is defined by setting $g \cdot \Gamma_{G/H} = \Gamma_{G/gHg^{-1}}$, for any $g \in G$. The graphs $\Gamma_{G/H}$ and $\Gamma_{G/gHg^{-1}}$ are isomorphic as metric spaces, but not necessarily as pointed metric spaces.  We think about the action of $G$ on ${\rm Sch}(G,S) $ as moving the distinguished vertex in $ \Gamma_{G/H} $ from $eH$ to the coset of $gH$. It is immediate that the map
  \begin{align}\label{eq-subgroupsgraphs}{\rm Sub}(G) \to {\rm Sch}(G,S): H \mapsto \Gamma_{G/H} \end{align}
is a homeomorphism which commutes with the action of $G$ on ${\rm Sub}(G) $ and $ {\rm Sch}(G,S)$.

\begin{lemma}\label{lemma-expgraphs}
Let $Y \subset {\rm Sch}(G,S)$ be a minimal closed subset invariant under the action of $G$. Then $Y$ is finite if and only if the action of $G$ on $Y$ is equicontinuous. 
\end{lemma}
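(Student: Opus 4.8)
\sop Both implications rest on the description (given just before the lemma) of the $G$-action on ${\rm Sch}(G,S)$ as \emph{moving the root}: under the $G$-equivariant homeomorphism \eqref{eq-subgroupsgraphs}, the action of $g$ on $\Gamma_{G/K}$ carries the root $eK$ to the vertex $gK$, which is exactly the conjugation action $K\mapsto gKg^{-1}$ on ${\rm Sub}(G)$. The easy direction is immediate: if $Y$ is finite then the metric $D_{\rm Sch}$ restricted to $Y$ is uniformly discrete, say $D_{\rm Sch}(\Gamma,\Gamma')\geq\delta$ for distinct $\Gamma,\Gamma'\in Y$; then for every $\epsilon>0$ the number $\delta'=\delta$ witnesses equicontinuity, since $D_{\rm Sch}(\Gamma,\Gamma')<\delta'$ forces $\Gamma=\Gamma'$ and hence $D_{\rm Sch}(g\Gamma,g\Gamma')=0<\epsilon$ for all $g$. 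This uses neither minimality nor the special structure of ${\rm Sch}(G,S)$.

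For the converse, assume $(Y,G)$ is equicontinuous; the goal is to show $Y$ is \emph{uniformly discrete} in $({\rm Sch}(G,S),D_{\rm Sch})$, which for a compact $Y$ forces $Y$ to be finite. First I would record the elementary dictionary: for subgroups $K,K'\leq G$ one has $D_{\rm Sch}(\Gamma_{G/K},\Gamma_{G/K'})\leq 2^{-r}$ if and only if $K\cap B_G(r)=K'\cap B_G(r)$, up to a fixed rescaling of $r$, where $B_G(r)$ is the word-metric ball of radius $r$ about $e$ in $(G,S)$ (two based Schreier graphs have isomorphic $r$-balls precisely when $K$ and $K'$ contain the same group elements of length $\leq 2r$). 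Since $g$ acts by conjugation and $\bigcup_{g\in G}g^{-1}B_G(m)g=C_m$ is the conjugation-invariant set of all conjugates of elements of length $\leq m$, equicontinuity translates to: for every $m$ there is $N=N(m)$ such that, for all $K,K'\in Y$,
$$K\cap B_G(N)=K'\cap B_G(N)\ \Longrightarrow\ K\cap C_m=K'\cap C_m.$$

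Because each $C_m$ is $G$-invariant, the assignment $K\mapsto K\cap C_m$ is a $G$-equivariant map of $Y$ onto the set $Z_m=\{K\cap C_m:K\in Y\}$, which is finite (it is a further quotient of the finite set $\{K\cap B_G(N(m)):K\in Y\}$), with $G$ acting on $Z_m$ by conjugation. By minimality of $Y$ each $Z_m$ is a transitive finite $G$-system, and since $\bigcup_m C_m=G$ the maps $K\mapsto(K\cap C_m)_m$ embed $Y$ equivariantly into $\varprojlim_m Z_m$. The heart of the matter is then to upgrade the equicontinuity statement to a \emph{uniform} radius: to produce a single $N_0$ such that $K\cap B_G(N_0)=K'\cap B_G(N_0)$ already implies $K=K'$ on $Y$ (equivalently, the tower $\{Z_m\}$ stabilizes, equivalently $Y\cong\varprojlim_m Z_m$ is finite). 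I would argue by contradiction, using minimality and compactness to produce, for each $r$, distinct conjugates $aK_0a^{-1}\neq bK_0b^{-1}$ in $Y$ agreeing on $B_G(r)$ — equivalently, based Schreier graphs in $Y$ isomorphic on arbitrarily large balls about their roots but not equal. The failure of such a ball-isomorphism to extend one step is caused by a configuration of bounded size (two vertices at distance $r$ from the root sharing a neighbour at distance $r+1$ along prescribed generators); translating this configuration to the root by a geodesic word and applying equicontinuity with a large target radius should force the two graphs to agree near the displaced root beyond the size of the configuration, a contradiction.

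The main obstacle I anticipate is exactly making this localization rigorous: when one moves the root of one graph onto the obstruction vertex, controlling where the ``partner'' vertex of the configuration is carried on the other graph is delicate, because a priori it may have been pushed far from the displaced root. This is the step that genuinely uses equicontinuity rather than the mere move-the-root structure (which by itself is never equicontinuous on an infinite minimal $Y$), and it is the Cantor-group analogue of the classical fact that a minimal equicontinuous subshift is periodic, hence finite. An equivalent packaging of the argument goes through Proposition~\ref{prop-CO}: equicontinuity means every clopen subset of $Y$ has finite $G$-orbit; applied to the point-separating family $\{\,\{K\in Y:g\in K\}\,\}_{g\in G}$, on which $G$ acts by $g'\cdot\{K:g\in K\}=\{K:g'gg'^{-1}\in K\}$, one must again extract a uniform ``level'' at which membership $g\in K$ is decided, and the same localization issue is the crux. $\eop$
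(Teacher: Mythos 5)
Your forward direction (finite $\Rightarrow$ equicontinuous) is correct and agrees with the paper's, and your dictionary for the converse is set up correctly: $D_{\rm Sch}(\Gamma_{G/K},\Gamma_{G/K'})\le 2^{-r}$ iff $K$ and $K'$ contain the same elements of word length $\le 2r$, so equicontinuity on $Y$ is exactly the statement that for every $m$ there is $N(m)$ with $K\cap B_G(N(m))=K'\cap B_G(N(m))\Rightarrow K\cap C_m=K'\cap C_m$. The problem is that your argument stops precisely at the step that carries all of the content. You reduce the lemma to producing a single uniform radius $N_0$ (equivalently, to showing the tower $\{Z_m\}$ stabilizes), sketch a contradiction argument via a ``bounded obstruction configuration,'' and then state explicitly that you cannot control where the partner vertex of that configuration lands after the root is moved. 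As written this is a proof strategy with an acknowledged hole at its centre, not a proof: nothing in the proposal actually rules out an infinite minimal equicontinuous $Y$.

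For comparison, the paper's proof of this direction is a short covering argument rather than a uniformization-by-contradiction scheme. Given $\e=2^{-k}$, equicontinuity yields $\delta$ so that $D_{\rm Sch}(\Gamma_{G/H_1},\Gamma_{G/H_2})<\delta$ forces $D_{\rm Sch}(g\cdot\Gamma_{G/H_1},g\cdot\Gamma_{G/H_2})<2^{-k}$ for all $g\in G$; reading $g\cdot\Gamma_{G/H}$ as $\Gamma_{G/H}$ with its root moved to $gH$, this says the labelled $k$-balls centred at $gH_1$ and at $gH_2$ are isomorphic for every $g$, and since these balls cover the two graphs the paper concludes $\Gamma_{G/H_1}=\Gamma_{G/H_2}$; thus $Y$ is uniformly discrete and hence finite by compactness (minimality is not invoked in this direction). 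You should be aware that your own worry points exactly at the paper's final inference: ``isomorphic $k$-balls at every corresponding pair of vertices'' amounts to $H_1\cap C_{2k}=H_2\cap C_{2k}$, and since $C_{2k}$ (the elements conjugate into $B_G(2k)$) is in general a proper subset of $G$, passing from this to $H_1=H_2$ is not a purely formal covering statement --- the local ball isomorphisms still have to be shown to glue. So your instinct about where the difficulty sits is sound, but diagnosing the difficulty is not the same as resolving it, and your proposal does not supply the missing step.
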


\proof A finite subset $Y$ is discrete, so if $G$ acts on a finite discrete set $Y$ minimally, then $Y$ consists of a single periodic orbit. A periodic action on a finite set is trivially equicontinuous.

Suppose the action of $G$ on $Y$ is equicontinuous. 
Then given $\e = 1/2^k$, $k >1$, there is $\delta >0$ such that if $D_{\rm Sch}(\Gamma_{G/H_1}, \Gamma_{G/H_2}) < \delta$ then $D_{\rm Sch}(g \cdot \Gamma_{G/H_1}, g\cdot \Gamma_{G/H_2})< 1/2^k$ for all $g \in G$. It follows that for any $g \in G$, there is an isomorphism between balls of radius $k$ centered at  $gH_1 \in \Gamma_{G/H_1}$ and $gH_2 \in \Gamma_{G/H_2}$. Since $\Gamma_{G/H_1}$ and $\Gamma_{G/H_2}$ are covered by the union of such balls, it follows that $ \Gamma_{G/H_1} =  \Gamma_{G/H_2}$, and $Y$ consists of isolated points. By assumption $Y$ is a closed subset of a compact space ${\rm Sch}(G,S)$, therefore it must be a finite set. 
\endproof

A metric on the space ${\rm Sub}(G)$ can be defined as a pullback of the metric \eqref{eq-metricSchreier} along the map \eqref{eq-subgroupsgraphs}. Such a metric depends on the choice of the generating set $S$. However, the conclusion of Lemma \ref{lemma-expgraphs} is true for any finite generating set, i.e. it is independent of the particular choice.

The study of the action of $G$ by conjugation on ${\rm Sub}(G)$, and of the invariant measures on this space appears first in the work of Moore \cite{AM1966}, see also Ramsay \cite{Ramsay1971}. The term invariant random subgroup (IRS) was introduced in Ab\'ert, Glasner and Virag \cite{AGV2014} and in Bowen \cite{Bowen2014}. The study of IRS's for locally compact Lie groups and their lattices started with the work of Stuck and Zimmer \cite{SZ94}, and has developed rapidly in the recent years, see Gelander \cite{Gelander2018,Gelander2018-2} for recent surveys. Another problem of interest is the classification of the IRS for various countable groups \cite{BGK2015}. For instance, Vershik \cite{Vershik2012} classified IRS's for the infinite finitary symmetric group. Bowen \cite{Bowen2015} showed that a free non-abelian group has many IRS's, and Bowen, Grigorchuk and Kravchenko \cite{BGK2015} obtained a similar statement for the lamplighter group. IRS's for universal groups of intermediate growth were studied by Benli, Grigorchuk and Nagnibeda \cite{BGN2015}. Bencs and T\'oth \cite{BT2018} considered IRS's for a subgroup $Aut(T)$ of a $d$-ary tree $T$ generated by finitary alternating automorphisms, with applications to actions of weakly branch groups.  Zheng \cite{Zheng2019} studied IRS's for the full group of minimal $\mZ^d$-actions on Cantor sets, and for branch groups. Thomas and Tucker-Drob in \cite{TTD2014} and \cite{TTD2018} classified IRS's for diagonal inductive limits of respectively finite symmetric groups and finite alternating groups. Dudko and Medynets \cite{DM2019} studied IRS's of full groups with associated Bratteli diagrams admitting finite number of ergodic measures.

In this paper, we consider the IRS's defined by actions of finitely generated groups on rooted spherically homogeneous trees. We allow the tree to have any spherical index. For the class of locally non-degenerate actions, we study the support of the IRS depending on whether the action is locally quasi-analytic.

\subsection{Proof of Theorem \ref{thm-application}} We obtain Theorem \ref{thm-application} as a direct consequence of Theorem \ref{thm-application-sec}.

Recall that two measure-preserving systems $(X,G,\Phi,\mu)$ and 
$(Y,G,\Psi,\nu)$ are \emph{isomorphic in the measure-theoretical sense} if
there are invariant sets of full measure $S_X \subset X$ and $S_Y \subset Y$,  and a bi-measurable bijection $\psi: S_X \to S_Y$ 
such that $\mu(\psi^{-1}(A))=\nu(A)$ for all measurable $A\subset S_Y$, and 
$\psi$ conjugates the action of $G$ on $S_X$ and $S_Y$, that is,
$$\psi(\Phi(g) \cdot x) = \Psi(g)\cdot \psi(x) \textrm{ for all }x \in S_X.$$
In this case we call $\psi$ a \emph{measure-theoretical isomorphism}.

Recall that $X_0$ denotes the subset of $X$ consisting of points without holonomy, and $Z = \overline{\{ G_x  \mid x \in X_0\}}$. 

\begin{thm}\label{thm-application-sec}
Let $X$ be a Cantor set, let $G$ be a finitely generated group, and let $(X,G,\Phi,\mu)$ be a minimal equicontinuous action. Suppose the set $X_0$ of points without holonomy has full measure in $X$. Then the following holds:
\begin{enumerate}
\item The IRS $\nu = {\rm St}_*\mu$  is supported on $Z$. 
\item The action $(X,G,\Phi,\mu)$ is not LQA if and only if $\nu$ is non-atomic. 
\item If $(X,G,\Phi,\mu)$ is not LQA and the restriction ${\rm St}: X_0 \to {\rm Sub}(G)$ is injective, then ${\rm St}$ provides a measure-theoretical isomorphism between $(X, G,\Phi,\mu)$ and $(Z,G,\nu)$.
\end{enumerate}
\end{thm}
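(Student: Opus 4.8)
The plan is to establish the three statements in order, as each builds on the previous. For statement (1), I would argue that by Lemma~\ref{lemma-vorobets}(2), the map ${\rm St}$ is continuous precisely on $X_0$, and by hypothesis $X_0$ has full measure, so $\nu = {\rm St}_*\mu$ is concentrated on the set ${\rm St}(X_0) = \{G_x \mid x \in X_0\}$. Since $Z$ is by definition the closure of this set, and $\nu$ is a Borel measure on the compact space ${\rm Sub}(G)$, it follows that $\nu$ is supported on $Z$. (One should be slightly careful: $\mu({\rm St}^{-1}(Z)) \ge \mu(X_0) = 1$, and $Z$ is closed, so ${\rm supp}(\nu) \subset Z$; minimality of the $G$-action on $Z$ via Lemma~\ref{lemma-gw} then forces ${\rm supp}(\nu) = Z$.)

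For statement (2), I would use Theorem~\ref{thm-stabilizers} (in the form of Theorem~\ref{thm-stabilizers-proof}). If the action \emph{is} LQA, then all points in $X_0$ have stabilizers in a single finite conjugacy class, so ${\rm St}(X_0)$ is finite, hence $\nu$ is a finitely supported (in particular atomic) measure on the orbit of $G_x$ in ${\rm Sub}(G)$. Conversely, if the action is \emph{not} LQA, I want to show $\nu$ has no atoms. The key point is that for any fixed closed subgroup $H$, the set $\{x \in X_0 \mid G_x = H\}$ must have measure zero: if it had positive measure, then since this set is ${\rm St}$-measurable and $\mu$ is ergodic under a minimal equicontinuous (hence isometric) action, one derives that stabilizers are $\mu$-a.e.\ constant on $X_0$ up to conjugacy, and combined with $\mu(X_0)=1$ and local quasi-analyticity being equivalent to the finiteness statement of Theorem~\ref{thm-stabilizers-proof}, one gets a contradiction with non-LQA. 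More concretely, I would invoke the construction in the proof of Theorem~\ref{thm-stabilizers-proof}: from failure of LQA one produces, for each $n$, a nested family of clopen sets $W_{k_1\cdots k_n}$ of positive measure with pairwise distinct stabilizers, so that ${\rm St}$ separates sets of positive measure into infinitely many fibers, each of measure at most $2^{-n}\mu(W_\emptyset) \to 0$; hence no single fiber carries positive mass, and $\nu = {\rm St}_*\mu$ is non-atomic. This is the step I expect to be the main obstacle: making rigorous that the clopen sets in the Theorem~\ref{thm-stabilizers-proof} construction can be chosen with controlled (positive, shrinking) measure and that the resulting fiber decomposition genuinely forces atomlessness, rather than just infinitely many atoms.

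For statement (3), assume the action is not LQA and ${\rm St}|_{X_0}$ is injective. Let $S_X = X_0$, which is $G$-invariant of full $\mu$-measure, and let $S_Z = {\rm St}(X_0) \subset Z$, which is $G$-invariant and of full $\nu$-measure by construction of $\nu$ as the pushforward. The map $\psi := {\rm St}|_{X_0} \colon S_X \to S_Z$ is a bijection by the injectivity hypothesis, it is Borel measurable by Lemma~\ref{lemma-vorobets}(1) and in fact continuous by Lemma~\ref{lemma-vorobets}(2); its inverse is Borel since ${\rm St}|_{X_0}$ is a Borel bijection between standard Borel spaces. Equivariance $\psi(g\cdot x) = g\, G_x\, g^{-1} = g\cdot \psi(x)$ is immediate from the definition of ${\rm St}$ and the conjugation action on ${\rm Sub}(G)$. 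Finally, $\psi_*\mu = \nu$ by definition of $\nu = {\rm St}_*\mu$, so $\psi$ is measure-preserving. Hence $\psi$ is a measure-theoretical isomorphism between $(X,G,\Phi,\mu)$ and $(Z,G,\nu)$, which is what was claimed. The only minor subtlety here is confirming that $S_Z$ has full $\nu$-measure: this holds because $\nu(S_Z) = \mu({\rm St}^{-1}(S_Z)) \ge \mu(X_0) = 1$.
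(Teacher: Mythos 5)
Your parts (1) and (3) are correct and essentially match the paper's (much terser) argument. The problem is the converse direction of (2), precisely the step you flagged as the main obstacle: the bound you propose does not follow from the construction in the proof of Theorem \ref{thm-stabilizers-proof}. That construction produces, at stage $n$, a family of $2^n$ clopen sets $W_{k_1\cdots k_n}$ chosen as translates $h\cdot U_{s_n}$ of ever deeper adapted sets; these sets do not partition $X$ (their union has small measure and shrinks as $n$ grows), there is no control relating their individual measures to $2^{-n}$, and, crucially, a fiber ${\rm St}^{-1}(H)\cap X_0$ is in no way confined to them. So the claim that each fiber has measure at most $2^{-n}$ times a fixed quantity is unsupported. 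The ergodicity remark also does not help: the set of $x$ with $G_x$ conjugate to a fixed $H$ is invariant and hence has measure $0$ or $1$, but measure $1$ is perfectly consistent with $\nu$ being purely atomic along a single infinite conjugation orbit, so no contradiction with non-LQA is reached this way.

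The paper closes this gap via the structure of $Z$ rather than of the fibers. By Theorem \ref{thm-stabilizers} in the non-LQA case, for $x\in X_0$ the conjugation orbit $O_{G_x}=\{gG_xg^{-1}\mid g\in G\}$ is infinite. By Lemma \ref{lemma-gw}, $Z$ is the unique minimal set for conjugation, so $O_{G_x}$ is dense in $Z$, $Z$ contains limit points of $O_{G_x}$, and the orbit of each such limit point is again dense; hence $Z$ is perfect, i.e.\ a Cantor set. In particular every point of $Z$ has infinite orbit, since a finite orbit would be a proper closed invariant subset of the minimal infinite set $Z$. Now if $\nu$ had an atom at some $H$, then $H$ lies in ${\rm supp}(\nu)\subset Z$, and invariance of $\nu$ under conjugation assigns the same positive mass to each of the infinitely many points of the orbit of $H$, contradicting $\nu({\rm Sub}(G))=1$. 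You should substitute this argument for your fiber estimate; salvaging your route would require proving directly that $\{x\in X_0\mid G_x=H\}$ is $\mu$-null for every $H$, which the nested-set construction does not give.
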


\proof The first statement is a direct consequence of the assumption that the set $X_0$ of points without holonomy has full measure in $X$.

If the action $(X,G,\Phi,\mu)$ is locally quasi-analytic (LQA), then by Theorem \ref{thm-stabilizers} $Z$ is finite. Since $Z$ is the support of $\nu$, then in this case $\nu$ must be atomic. So if $\nu$ is non-atomic, then $(X,G,\Phi,\mu)$ is not LQA. If $(X,G,\Phi,\mu)$ is not LQA, then by Theorem \ref{thm-stabilizers} for every point without holonomy $x \in X_0$ the set $O_{G_x} = \{G_{g\cdot x} \mid  g\in G\} = \{g G_x g^{-1} \mid g \in G\}$ is infinite. This set is the orbit of $G_x$ in ${\rm Sub}(G)$ under the action of $G$ by conjugation. 
By Lemma \ref{lemma-gw} $Z$ is minimal, so $O_{G_x}$ is dense in $Z$. Since $O_{G_x}$ is infinite, the closure $Z$ of $O_{G_x}$ is strictly larger than $O_{G_x}$, containing limit points of $O_{G_x}$. By Lemma \ref{lemma-gw} the orbit of every limit point is dense in $Z$, and it follows that $Z$ is perfect. Then $Z$ is a Cantor set. Since the measure $\nu$ supported on $Z$ is finite, it is non-atomic. This proves statement $(2)$.

In the case when $(X,G,\Phi,\mu)$ is not LQA under the additional assumption that the restriction ${\rm St}|_{X_0}$ is injective, the third statement follows from the fact that $\nu$ is a push-forward measure.
\endproof

\proof \emph{(of Theorem \ref{thm-application})} Since $(X,G,\Phi,\mu)$ is locally non-degenerate, by Theorem  \ref{thm-holonomymeasure0} the set $X_0$ of points without holonomy has full measure in $X$. Then the statement follows by Theorem \ref{thm-application-sec}.
\endproof

\subsection{Almost one-to-one extensions}\label{sec-meanequicont}

Recall that in \eqref{eq-wtildeX} we defined the set 
  $$\widetilde X=\overline{\{(x,G_x) \mid x\in X_0\}}\subset X\times Z,$$ 
and the factor  map $\eta:\widetilde X\to X$ is almost one-to-one.
Moreover, recall that any continuous action has a unique (up to conjugacy)
\emph{maximal equicontinuous factor}, in the sense that any other
equicontinuous factor is also a factor of this maximal one, see for 
instance \cite{Auslander1988}.

\begin{prop}[{\cite[V(6.1)5, page 480]{deVries1993}}]\label{prop-almost1-1MEF}
	If $(Y,G,\Psi)$ is a minimal extension of an equicontinuous system $
	(X,G,\Phi)$ via a corresponding factor map which is almost one-to-one,
	then $(X,G,\Phi)$ is the maximal equicontinuous factor of $(Y,G,\Psi)$. 
\end{prop}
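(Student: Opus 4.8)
The plan is to identify $(X,G,\Phi)$ with the maximal equicontinuous factor by exhibiting the canonical comparison map onto it and showing that map is a conjugacy. Write $\pi\colon Y\to X$ for the given almost one-to-one factor map and $\pi_{\mathrm{eq}}\colon Y\to Y_{\mathrm{eq}}$ for the canonical map onto the maximal equicontinuous factor of $(Y,G,\Psi)$; both $X$ and $Y_{\mathrm{eq}}$ are minimal, being factors of the minimal system $(Y,G,\Psi)$. Since $(X,G,\Phi)$ is an equicontinuous factor of $(Y,G,\Psi)$, the universal property of the maximal equicontinuous factor supplies a factor map $q\colon Y_{\mathrm{eq}}\to X$ with $\pi=q\circ\pi_{\mathrm{eq}}$. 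It then suffices to prove that $q$ is a conjugacy, for in that case $(X,G,\Phi)\cong(Y_{\mathrm{eq}},G)$ is the maximal equicontinuous factor, as claimed.

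First I would show that $q$ is itself almost one-to-one. For every $x\in X$ one has $\pi^{-1}(x)=\pi_{\mathrm{eq}}^{-1}(q^{-1}(x))$, and surjectivity of $\pi_{\mathrm{eq}}$ yields the fibrewise identity $q^{-1}(x)=\pi_{\mathrm{eq}}(\pi^{-1}(x))$. Hence if $x$ is an injectivity point of $\pi$, say $\pi^{-1}(x)=\{y\}$, then $q^{-1}(x)=\{\pi_{\mathrm{eq}}(y)\}$ is a singleton. The set $Y_1\subset Y$ of such $y$ is dense by the almost one-to-one hypothesis, and since $\pi_{\mathrm{eq}}$ is a continuous surjection of compact spaces its image $\pi_{\mathrm{eq}}(Y_1)$ is dense in $Y_{\mathrm{eq}}$; thus the singleton-fibre points of $q$ are dense, i.e.\ $q$ is almost one-to-one.

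The main step, and the crux of the argument, is that an almost one-to-one factor map between minimal equicontinuous systems is automatically a conjugacy. Here I would invoke the homogeneous structure of minimal equicontinuous systems exactly as in Section \ref{subsec-measure}: the enveloping group $E$ of $Y_{\mathrm{eq}}$ is a compact group acting transitively on $Y_{\mathrm{eq}}$. Because $q$ is $G$-equivariant, the relation $q(y)=q(y')$ is $G$-invariant, hence $E$-invariant by continuity, so the $E$-action descends through $q$ to a transitive $E$-action on $X$ for which $q$ is equivariant. Fixing $a_0\in Y_{\mathrm{eq}}$ and $x_0=q(a_0)$, the isotropy groups satisfy $E_{a_0}\subset E_{x_0}$, and one identifies $Y_{\mathrm{eq}}=E/E_{a_0}$ and $X=E/E_{x_0}$ with $q$ the natural projection. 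Then $q^{-1}(x_0)=E_{x_0}\cdot a_0\cong E_{x_0}/E_{a_0}$, and by equivariance every fibre of $q$ is an $E$-translate of this one, hence homeomorphic to it. Since $q$ has at least one singleton fibre by the previous step, $E_{x_0}=E_{a_0}$, so every fibre is a singleton; thus $q$ is a continuous bijection between compact Hausdorff spaces, therefore a homeomorphism and a conjugacy.

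The main obstacle is precisely this last step, the rigidity that forces an almost one-to-one equicontinuous extension to be trivial: this is where equicontinuity (equivalently distality) is indispensable, since without it an almost one-to-one extension can be a genuine proximal extension, and it is handled by reducing to the constancy of the fibre type under the transitive enveloping-group action. A purely metric alternative would run through an invariant metric on $Y_{\mathrm{eq}}$ and the distality of equicontinuous systems, using that distal factor maps are open with fibres of constant cardinality, but the enveloping-group computation is the most direct route given the homogeneous-space description already recorded in Section \ref{subsec-measure}.
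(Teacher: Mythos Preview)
The paper does not prove this proposition at all: it is quoted verbatim from de~Vries' book \cite{deVries1993} and used as a black box in the proof of Theorem~\ref{thm-alternative-LAQ}. So there is no ``paper's own proof'' to compare against.

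That said, your argument is correct and is essentially the standard one. The factorization $\pi=q\circ\pi_{\mathrm{eq}}$ and the verification that $q$ inherits the almost one-to-one property are routine. The substantive step---that an almost one-to-one factor map between minimal equicontinuous systems is a conjugacy---is handled cleanly via the Ellis group: the key point, which you identify correctly, is that the transitive action of the enveloping group forces all fibres of $q$ to be mutually homeomorphic, so a single singleton fibre makes $q$ bijective. One small remark: when you say ``$\pi_{\mathrm{eq}}(Y_1)$ is dense in $Y_{\mathrm{eq}}$'' you should perhaps note explicitly that this set is contained in $\{a\in Y_{\mathrm{eq}}:|q^{-1}(q(a))|=1\}$, which you effectively do via the fibrewise identity $q^{-1}(x)=\pi_{\mathrm{eq}}(\pi^{-1}(x))$; the logic is sound but the sentence order slightly obscures it.
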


We prove Theorem \ref{thm-alternative-LAQ} which provides the following
alternative characterization of LQA actions.

\begin{thm}
Let $X$ be a Cantor set, and let $G$ be a finitely generated group. Let $(X,G,\Phi)$ be a minimal equicontinuous action. 
Then $(X,G,\Phi)$ is locally quasi-analytic if and only if $\eta:\widetilde X\to X$ is a conjugacy.
\end{thm}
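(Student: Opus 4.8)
The plan is to prove the equivalence in two directions, using the characterization of LQA actions in terms of stabilizers of points without holonomy established in Theorem~\ref{thm-stabilizers-proof}, together with Lemma~\ref{lemma-vorobets} which identifies the continuity locus of the stabilizer map ${\rm St}$ with the set $X_0$ of points without holonomy.

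\textbf{The "only if" direction.} Assume $(X,G,\Phi)$ is LQA. I would first observe that the map $x \mapsto (x,G_x)$ is well-defined and continuous on $X_0$, since by Lemma~\ref{lemma-vorobets}(2) the stabilizer map ${\rm St}$ is continuous at every point of $X_0$. By Theorem~\ref{thm-stabilizers-proof} the set $Z = \overline{\{G_x \mid x \in X_0\}}$ is a finite set of conjugate subgroups, hence discrete, and the restriction ${\rm St}|_{X_0}$ takes each point of $X_0$ to a locally constant value; more precisely, using the adapted neighborhood basis argument in the proof of Theorem~\ref{thm-stabilizers-proof}, each sufficiently small clopen set $U_\ell$ (with ${\rm diam}(U_\ell)<\epsilon$) has all its points-without-holonomy carrying the same stabilizer $\ker(\Phi_\ell)$. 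Therefore the section $s:X_0 \to \widetilde X$, $s(x)=(x,G_x)$, extends continuously to all of $X$: on each clopen translate $g\cdot U_\ell$ it is the constant-in-second-coordinate map $x \mapsto (x, g\ker(\Phi_\ell)g^{-1})$, and these patch together consistently on the finite partition $\{g\cdot U_\ell\}$. This continuous extension $s:X\to\widetilde X$ is a two-sided inverse to $\eta$: $\eta\circ s = {\rm id}_X$ by construction, and $s\circ\eta = {\rm id}_{\widetilde X}$ because both agree on the dense set $\{(x,G_x)\mid x\in X_0\}$ and $\widetilde X$ is Hausdorff. Since $s$ is also $G$-equivariant (it sends $g\cdot x$ to $(g\cdot x, gG_xg^{-1})$, which is $\widetilde\Phi(g)\cdot s(x)$), $\eta$ is a conjugacy.

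\textbf{The "if" direction.} Assume $\eta:\widetilde X \to X$ is a conjugacy; I want to deduce $(X,G,\Phi)$ is LQA. Arguing contrapositively, suppose the action is not LQA. Then by Theorem~\ref{thm-stabilizers-proof}, for any $x\in X_0$ the conjugacy orbit $O_{G_x}=\{gG_xg^{-1}\mid g\in G\}$ is infinite, so (as in the proof of Theorem~\ref{thm-application-sec}(2), invoking Lemma~\ref{lemma-gw}) the set $Z$ is an infinite minimal subset of ${\rm Sub}(G)$, hence a Cantor set. Now I would show $\eta$ cannot be injective. Fix $x\in X_0$; the orbit closure $\overline{G(x)}=X$, and along a sequence $g_n\cdot x \to x$ with $g_n\notin G_x$ chosen so that the conjugates $g_nG_xg_n^{-1}$ are pairwise distinct (possible since $O_{G_x}$ is infinite and $Z$ has no isolated points, so we can select such a sequence converging to some $H\in Z$ with $H\neq G_x$), we get points $(g_n\cdot x, g_nG_xg_n^{-1})\in\widetilde X$ converging to some $(x,H)\in\widetilde X$ with $H\subsetneq G_x$ (strict, since $H\neq G_x$ while by Lemma~\ref{lemma-vorobets}(3) we have $[G]_x\subset H\subset G_x$; because $x\in X_0$, $[G]_x=G_x$, forcing $H=G_x$ — so actually I need to be more careful here). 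The correct argument: for $x\in X_0$, Lemma~\ref{lemma-vorobets}(3) gives $[G]_x = G_x \subset H \subset G_x$, so $H=G_x$, meaning the fiber $\eta^{-1}(x)$ over a point of $X_0$ is the singleton $\{(x,G_x)\}$. So instead I must locate $\eta$'s non-injectivity over $X\setminus X_0$, i.e. over a point $y$ with holonomy. Since the action is not LQA, $X\setminus X_0$ is nonempty (indeed, a non-LQA minimal equicontinuous action has points with holonomy, else it would be topologically free, hence quasi-analytic by \cite[Proposition~2.2]{HL2018}). Fix $y$ with $[G]_y\subsetneq G_y$. By minimality and density of $X_0$, pick $x_n\in X_0$ with $x_n\to y$; after passing to a subsequence, $G_{x_n}\to H$ in ${\rm Sub}(G)$ (compactness) with, by Lemma~\ref{lemma-vorobets}(3), $[G]_y\subset H\subset G_y$. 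The pair $(y,H)$ lies in $\widetilde X$. I then want two distinct such limits, which I would obtain by choosing two subsequences of $X_0$-points approaching $y$ from "different sides" — concretely, using that the holonomy of some $g\in G_y\setminus[G]_y$ is nontrivial on every neighborhood of $y$, so in any clopen neighborhood of $y$ there are points $x$ moved by $g$ (giving $g\notin G_x$, so limit $H$ with $g\notin H$) and points $x$ fixed in a neighborhood by $g$ is impossible near a holonomy point... rather: there are points $x$ with $g\cdot x = x$ is not guaranteed either. The cleanest route is: since $\eta$ is a conjugacy by assumption, $\widetilde X\cong X$ is the maximal equicontinuous factor of itself, and $(\widetilde X, G, \widetilde\Phi)$ is minimal; the stabilizer map on $\widetilde X$ is $(x,H)\mapsto$ (stabilizer of $(x,H)$ under $\widetilde\Phi$) $= G_x \cap N_G(H)$-type expression — I would instead directly show that a conjugacy $\eta$ forces the second coordinate $H$ to be a continuous function of $x$ on all of $X$, hence (being locally constant into the totally disconnected ${\rm Sub}(G)$, and by minimality) $\{G_x\mid x\in X_0\}$ is a single finite conjugacy class, contradicting non-LQA via Theorem~\ref{thm-stabilizers-proof}.

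\textbf{Main obstacle.} The delicate point, flagged above, is the "if" direction: showing that if $\eta$ fails to be a conjugacy-obstruction-free (i.e. if the action is not LQA) then $\eta$ is genuinely non-injective. The subtlety is that fibers of $\eta$ over $X_0$ are automatically singletons (by Lemma~\ref{lemma-vorobets}(3) combined with $[G]_x=G_x$ for $x\in X_0$), so the non-injectivity must be located over holonomy points, and one must produce, over some holonomy point $y$, at least two distinct closed subgroups $H_1\neq H_2$ arising as limits of stabilizers $G_{x_n}$ with $x_n\in X_0$, $x_n\to y$. I expect the right tool is: an element $g$ with nontrivial holonomy at $y$ satisfies neither "$g$ fixes a neighborhood of $y$" nor "$g$ moves a neighborhood of $y$", so every clopen neighborhood of $y$ contains both $X_0$-points fixed by $g$ and $X_0$-points moved by $g$; using non-LQA to guarantee the supply of such holonomy is itself robust (not removable by a single chart), and then extracting the two limit subgroups, is the crux of the argument. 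Once non-injectivity of $\eta$ is established, it contradicts the assumption that $\eta$ is a conjugacy, completing the contrapositive.
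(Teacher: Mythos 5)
Your ``only if'' direction is correct, and it takes a more hands-on route than the paper: you build the inverse section $s(x)=(x,gN g^{-1})$ explicitly from the local constancy of ${\rm St}$ on the clopen partition $\{g\cdot U_\ell\}$ (the one point you leave implicit --- that $gU_\ell=hU_\ell$ forces $g\ker(\Phi_\ell)g^{-1}=h\ker(\Phi_\ell)h^{-1}$ --- follows from normality of $\ker(\Phi_\ell)$ in $G_\ell$ and should be said). The paper instead argues abstractly: $Z$ finite makes $X\times Z$ equicontinuous, hence $\widetilde X$ is a minimal equicontinuous system, and an almost one-to-one extension that is equicontinuous must coincide with its maximal equicontinuous factor (Proposition \ref{prop-almost1-1MEF}), so $\eta$ is a conjugacy. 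Both work; yours is more elementary and also correctly identifies where non-injectivity could only occur (over holonomy points, since Lemma \ref{lemma-vorobets}(3) forces singleton fibers over $X_0$).

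The ``if'' direction, however, has a genuine gap, and you flag it yourself without closing it. Your final ``cleanest route'' is the right skeleton --- a conjugacy $\eta$ makes $x\mapsto \pi_2(\eta^{-1}(x))$ a continuous equivariant map $X\to Z$ extending ${\rm St}|_{X_0}$ --- but the step ``continuous into the totally disconnected ${\rm Sub}(G)$, hence locally constant'' is false: a continuous map from a Cantor set to a totally disconnected compact space need not be locally constant (the identity on the Cantor set is the obvious counterexample), and indeed in the non-LQA case $Z$ is itself a Cantor set by Theorem \ref{thm-application-sec}(2). What you actually get from the conjugacy is that $(Z,G)$ is a \emph{factor of an equicontinuous minimal system}, hence equicontinuous. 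The missing ingredient is then Lemma \ref{lemma-expgraphs}: a minimal closed conjugation-invariant subset of ${\rm Sub}(G)$ (viewed via Schreier graphs) on which $G$ acts equicontinuously must be finite --- this is a genuinely special property of the Chabauty--Fell/Schreier-graph metric, proved by noting that equicontinuity forces isomorphism of arbitrarily large balls around every basepoint, hence equality of the graphs, so the points of $Z$ are isolated. Only then does Theorem \ref{thm-stabilizers-proof} give LQA. Your alternative strategy (producing two distinct limit subgroups $H_1\neq H_2$ over a single holonomy point) is also left unexecuted, and it is not obvious it can be carried out directly; the factor-equicontinuity argument is the one that closes the proof.
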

\begin{proof}
	First, assume that $(X,G,\Phi)$ is LQA.
	By Theorem \ref{thm-stabilizers}, $Z$ is finite, and the action of $G$
	on $Z$ by conjugation is periodic, and so equicontinuous.
	Then the product action of $G$ on $X\times Z$ is
	also equicontinuous, see \cite[Lemma 4 in Chapter 2]{Auslander1988}.
	This in turn implies that the orbit closure of every point in $X\times Z$ is minimal,
	see \cite[Lemma 3 in Chapter 2]{Auslander1988}.
	Now, using that $\widetilde X$ is the unique minimal subset in 
	$\overline{\{(x,G_x) \, \mid \, x \in X\}}\subset X\times{\rm Sub}(G)$
	by \cite[Proposition 1.2 (3)]{GW2014}, we get that $\widetilde X=X\times Z$.
	Finally, since $\eta:\widetilde X\to X$ is almost one-to-one, we have that
	$(X,G,\Phi)$ is the maximal equicontinuous factor of $(\widetilde X,G,\tilde\Phi)$
	by Proposition \ref{prop-almost1-1MEF}, but this immediately implies that $\eta$
	must be a conjugacy.  
	
	For the opposite direction, assume that $\eta$ is a conjugacy.
	Then the action of $G$ on $\widetilde X$ is equicontinuous.
	Moreover, since $Z$ is a factor system of $\widetilde X$ (simply by projecting
	in the second coordinate), we get that the action of $G$ on $Z$ by conjugation
	is equicontinuous as well, see \cite[Corollary 6 in Chapter 2]{Auslander1988}. 
	Hence by Lemma \ref{lemma-expgraphs}, $Z$ is finite and so, by Theorem \ref{thm-stabilizers}, the action $(X,G,\Phi)$ is LQA.
\end{proof}

%%%%%%%%%%%%%%%%%%%%%%%%%%%%%%%%%%%%%%%%%%%%%%%%%%%%%%%

\end{document}